\documentclass[a4paper, 12pt]{amsart}
\usepackage{amsmath, amsthm, amscd, amssymb, amsfonts, amsxtra, latexsym}
\usepackage{enumerate}
\usepackage{verbatim}
\usepackage{float}

\usepackage{hyperref}
\hypersetup{colorlinks = true,	allcolors  = blue}

\newcommand{\Z}{\mathbb{Z}}
\newcommand{\N}{\mathbb{N}}
\newcommand{\ff}{\mathbb{F}}
\newcommand{\Tr}{\operatorname{Tr}}

\newcommand{\Cay}{\mathrm{Cay}}

\newcommand{\G}{\Gamma}

\newcommand{\tcbinom}[2]{\genfrac{[}{]}{0pt}{1}{#1}{#2}}

\newcommand{\sk}{\smallskip}
\newcommand{\msk}{\medskip}

\newtheorem{thm}{Theorem}[section]
\newtheorem{prop}[thm]{Proposition}
\newtheorem{lem}[thm]{Lemma}
\newtheorem{coro}[thm]{Corollary}

\theoremstyle{definition}
\newtheorem{rem}[thm]{Remark}
\newtheorem{exam}[thm]{Example}
\newtheorem{defi}[thm]{Definition}

\theoremstyle{remark}

\usepackage{tikz}
\usetikzlibrary{arrows.meta}

\usepackage{xcolor}

\def\purple{\color{purple}}

\begin{document} \sloppy
\numberwithin{equation}{section}
\title[Solutions of systems of diagonal equations through GP-graphs]
{On the number of solutions of systems \\ of diagonal equations through diagonal GP-graphs: \\ the general and the Hermitian-form cases} 
\author{Ricardo A.\@ Podest\'a, Denis E.\@ Videla}
\dedicatory{\today}
\keywords{Diagonal Cayley graphs, GP-graphs, diagonal equations, finite fields}
\thanks{2020 {\it Mathematics Subject Classification.} Primary 05C25;\, Secondary 05C30, 05C50, 11T06, 11T23.}
\thanks{Partially supported by CONICET, FonCyT and SECyT-UNC}
\address{Ricardo A.\@ Podest\'a. FaMAF -- CIEM (CONICET), Universidad Nacional de C\'ordoba. 
	\newline Av.\@ Medina Allende 2144, Ciudad Universitaria, (5000), C\'ordoba, Argentina. \newline
	{\it E-mail: podesta@famaf.unc.edu.ar}}
\address{Denis E.\@ Videla. FaMAF -- CIEM (CONICET), Universidad Nacional de C\'ordoba. 
	\newline	Av.\@ Medina Allende 2144, Ciudad Universitaria, (5000), C\'ordoba, Argentina.
	\newline {\it E-mail: devidela@famaf.unc.edu.ar}}

\begin{abstract}
For any $m, s \in \N$, we study the number $N_{m\times s,q}(\kappa, \beta)$ of solutions $(x_1,\ldots,x_s) \in (\ff_q)^s$ of the monic system of diagonal equations  
	$$ X_{1}^{k_i} + \cdots + X_{s}^{k_i}= \beta_i, \qquad (1\le i \le m), $$ 
with $\kappa=(k_1,\ldots,k_m) \in \N^m$ and $\beta=(\beta_1,\ldots,\beta_m) \in (\ff_q)^m$. 
We show that this number can be obtained 
in terms of some data of \textit{diagonal} GP-graphs $\G(\kappa,q)$. This is a new family of graphs that we introduce here, 
$\G(\kappa,q)$, with $\kappa = (k_1,\ldots,k_m) \in \N^{m}$, is the directed graph with vertex set the finite field $\ff_q$ and there is an arc from $u$ to $v$ if and only if $v-u \in R_{\kappa} = \{ (x^{k_1},\ldots,x^{k_m}) : x \in \ff_{q}^*\}$.
In particular, we give three different expressions for $N_{m\times s,q}(\kappa, \beta)$: one in terms of walks, another in terms of adjacency matrices of $\G(\kappa,q)$ and the last one in terms of the spectrum of $\G(\kappa,q)$. 
Finally, we explicitly derive combinatorial formulas for the number of solutions $N_{m}(s,q) = N_{m\times s,q}(\kappa_\ell, 0)$ of monic homogeneous systems of diagonal equations of the form 
$$ X_1^{q^{\ell_i}+1} + \cdots + X_s^{q^{\ell_i}+1} = 0 \qquad (1\le i \le m),$$ 
with $\kappa_\ell=(\ell_1,\ldots,\ell_m)=(1,3,\ldots,2m-1)$ and $m\ge 2$,
via the known spectrum of Hermitian-form graphs, which can be viewed as diagonal GP-graphs.
For any $m,s \in \N$, we give general summation and recursive formulas for $N_m(s,q) \in \Z[q]$. 
For the small cases $N_{1}(s,q)$, $N_{2}(s,q)$ and $N_{m}(s,q)$, with $1\le s \le 5$, we give explicit expressions.
\end{abstract}
	
\maketitle
	
\vspace{-1.9em}	
\section{Introduction} \label{sec: Intro}
In this work, we study the exact number of solutions of  certain systems of monic diagonal equations over finite fields.
We will do this by relating the data of the system with some diagonal GP-graphs that we define. 
In particular, the number of solutions of the systems will be given by the spectrum of the graphs. 
The case of one equation will be treated separately in a companion work.

\subsection*{Systems of diagonal equations}
Let $\ff_{q}$ be a finite field with $q$ elements, where $q$ is a prime power and $m, s \in \N$.
We will study particular instances of the system of diagonal equations (SDEs for short) over $\ff_{q}$ 
\begin{equation} \label{eq: gen SDE}
	\mathcal{S}_{m \times s,q}(A, \kappa, \beta) := \quad \begin{cases}
		\begin{tabular}{ccccccc}	
			$\alpha_{11} X_1^{k_1}$ & $+$ & $\cdots$ & $+$ & $\alpha_{1s} X_s^{k_1}$  & $=$ & $\beta_1$, \\
			$\vdots$          &       & $\vdots$ & & $\vdots$  &  & $\vdots$ \\
			$\alpha_{m1} X_1^{k_m}$ & $+$ & $\cdots$ & $+$ & $\alpha_{ms} X_s^{k_m}$  & $=$ & $\beta_m$.   
		\end{tabular}
	\end{cases}
\end{equation}
where  $A=(\alpha_{ij}) \in M_{m\times s} (\ff_q)$ is an $m \times s$ matrix with coefficients in the finite field $\ff_q$, 
$\beta = (\beta_1,\ldots,\beta_m) \in (\ff_q)^m$ is an $m$-tuple of field elements in $\ff_q$ and 
\begin{equation} \label{eq: vector K}
	\kappa = (k_1,\ldots,k_m) \in \N^m
\end{equation} 
is an $m$-tuple of positive integers. 
Without loss of generality, we will assume throughout the paper that $k_1\le k_2 \le \cdots \le k_m$.
 
If $m,s$ and $\ff_q$ are understood, we can write the system $\mathcal{S}_{m \times s,q}(A, \kappa, \beta)$ in matrix notation simply as
	$$ AX^\kappa = \beta, $$
where we use the notation $X^\kappa= (X_1^{k_1}, \ldots, X_s^{k_s})$ with $X=(X_1, \ldots, X_s)$ and $X$ and $\beta$ are thought as column vectors.

We will denote by $N_{m \times s,q}(A, \kappa, \beta)$ the number of solutions $x=(x_1,\ldots,x_s)$ in $(\ff_q)^s$ of the system \eqref{eq: gen SDE}, that is 
\begin{equation} \label{eq: No sols SDE}
	N_{m \times s, q}(A, \kappa, \beta) = \# \big\{ 
	x \in (\ff_q)^s \: : \: A_i \cdot x^\kappa = \beta_i, \: 1 \le i \le m
	\big\},
\end{equation}	
where $A_i$ denotes the $i$-th row of $A$ and $\cdot$ is the scalar product of vectors in $(\ff_q)^s$.
The same system can have different number of solutions in a subfield $\ff_t$ or in an extension $\ff_r$ of $\ff_q$. When we want to say something about the field where the solutions are considered we add the field in the notation. Thus, we will write 
$N_{m \times s, q}^{\ff_r}(A, \kappa, \beta)$ to denote the number of solutions of the system \eqref{eq: gen SDE} in $\ff_r$.
	
When $A=J$, the all 1's matrix, we will say that the system is \textit{monic} and we drop $A$ from the notation, i.e.\@ 
$\mathcal{S}_{m \times s, q}(\kappa, \beta) = \mathcal{S}_{m \times s, q}(J, \kappa, \beta) $. 
In this case we have to assume further that $k_1< \cdots < k_m$ (for if not, either we have repeated equations or inconsistent equations).
The system is \textit{homogeneous} if $\beta=(0,\ldots,0) \in (\ff_q)^m$, and we denote it simply by $\mathcal{S}_{m \times s, q}(A, \kappa)$. Hence, if the system is monic and homogeneous (which is the case we are mainly interested in) we just write 
	$$ \mathcal{S}_{m \times s, q}(\kappa). $$
Similarly, we make the same abbreviations for the number of solutions $N_{m \times s, q}(A, \kappa, \beta)$ of the corresponding system.  
	
\subsection*{A short history} 
There are only a few works about the explicit computation of the number of solutions of this kind of systems in the literature. 
Recently, by using algebraic geometry tools, Perez and Privitelli (\cite{PPri}, \cite{PPri2}) gave estimates for the number of solutions of certain systems of diagonal equations over finite fields. See the works \cite{ZHJYC} and \cite{ZWZH} for the relation of the number of solutions of (1.1) with the weight distribution of reducible cyclic codes and \cite{MC2} for the relation with the covering number of cyclic codes.

Beyond bounds and congruences, exact evaluations have been historically confined to specific families. For two equations we can only refer to three works, those of Carlitz, Tiet\"av\"ainen and Cao-Chou-Gu.
For instance, systems of two quadratic forms over $\ff_q$  
$$ \begin{cases}
 Q_1(X) = \sum a_j X_j^2 = b_1, \\[1.5mm]
 Q_2(X) = \sum c_j X_j^2=b_2, 	
\end{cases}$$
with $q$ odd were successfully enumerated by Carlitz \cite{Carlitz1953} using the invariant theory of pencils of quadrics. 
For higher degrees, Tiet\"av\"ainen in 1965 (see \cite{Tietavainen1965}) found explicit formulas for symmetric diagonal systems  of $m$ equations in $m \cdot k$ variables:
	$$	
	\sum_{1\le j \le k} X_{1,j}^d = 0, \qquad \ldots, \qquad  \sum_{1\le j \le k} X_{m,j}^d = 0, $$
for $d\mid q-1$, under strict divisibility conditions. 
More recently, in 2016, Cao, Chou, and Gu (\cite{CCG}) successfully used classical character sum methods to systems of two monic homogeneous diagonal equations of the form 
$$
\begin{cases}
\hfil	X_1 + \cdots + X_s & =0, \\[1.5mm] 
	X_1^{p^{\ell}+1} + \cdots +  X_s^{p^{\ell}+1} & = 0, 	
\end{cases}
$$ 
where $s\ge 2$, $\ell\ge 1$ and $q=p^{2\ell t}$ for $p$ prime and $t \in \N$, to found the number of solutions.
That is, only for the systems with $m=2$ and $\kappa=(2,2)$, any $m$ and $\kappa=(d,\ldots,d)$, and $m=2$ and $\kappa=(1,p^\ell+1)$, under certain conditions, the number of solutions can be explicitly computed.

On the other hand, the case of one equation (i.e. $m = 1$) has been extensively studied, primarily using classical number-theoretic approaches. The interested reader can refer to the pioneering work of Weil (\cite{W}), which relates the number of solutions in terms Jacobi and Gauss sums (see also the mandatory reference books \cite{LN}, \cite{MP}), as well as to the very recent advances by Oliveira \cite{Oliveira2026} on Fermat-type equation. The second author have studied the number of solutions of a particular case of diagonal equation by using GP-graphs with NEPS structure (see \cite{V}). 

In contrast to these classical techniques, our work introduces a novel graph-theoretic perspective to compute the exact number of solutions for systems with an arbitrary number of equations ($m \ge 2$). We treat the $m=1$ case with our techniques in a complementary forthcoming work \cite{PV26}.

\subsection*{New family of graphs: diagonal GP-graphs}
If $G$ is an abelian group and $S$ a subset of $G$ not containing $0$, 
the associated Cayley graph 
	$ \Gamma = \Cay(G,S) $ 
is the directed graph (digraph) with vertex set $G$ and where two vertices $u,v$ form a directed edge from $v$ to $u$ in $\Gamma$ if and only if $v-u \in S$. 
We will also need to consider the reversed Cayley graph $\overleftarrow{\G}$, the graph obtained from $\G$ by reversing all of its arcs. In general we have that 
	$$ \overleftarrow{\Cay}(G,S)=\Cay(G,-S) \qquad \text{and} \qquad \Cay(G,S) \simeq \Cay(G,-S).$$
If $S$ is symmetric ($S=-S$), then $\Cay(G,S)$ is a simple (undirected without loops) graph (considering each pair of arcs $\vec{xy}$ and $\vec{yx}$ as a single undirected edge $xy$), and hence $\Cay(G,S)=\overleftarrow{\Cay}(G,S)=\Cay(G,-S)$.

Given a finite field $\ff_q$ and $m\in \N$, for each $m$-tuple $\kappa = (k_1,\ldots,k_m) \in \N^{m}$ we define the 
\textit{diagonal generalized Paley graphs} (\textit{diagonal GP-graphs} for short), 
which are the reversed Cayley graphs 
\begin{equation} \label{eq: diag GPs} 
	\G(\kappa ,q) = \Cay(\ff^{m}_{q}, -R_\kappa) \qquad \text{with} \qquad 
		R_\kappa  = \{ (x^{k_1}, \ldots, x^{k_m}) : x \in \ff_{q}^* \}.  
\end{equation} 
That is, $\G(\kappa ,q)$ is the graph with vertex set $V=(\ff_q)^m$, such that for the vectors $u = (u_1,\ldots,u_m)$, $v=(v_1,\ldots,v_m) \in \ff^{m}_{q}$
there is an arrow from $u$ to $v$ if and only if 
there is some $x \in \ff_{q}^*$ such that 
	$$ v_i-u_i = x^{k_i} $$ 
for all $i=1,\ldots,m$.

One interesting instance of these graphs is when $m=1$, 
this graph is called \textit{generalized Paley graph} (\textit{GP-graph} for short) and is denoted by $\G(k,q)$,
more precisely 
\begin{equation*} \label{Gammas GP}
	\G(k,q) = Cay(\ff_{q}, -R_{k}) \qquad \text{with} \qquad R_k = \{ x^{k} : x \in \ff_{q}^*\}.
\end{equation*}
In this case, since 
\begin{equation} \label{eq: GKq = Gk'q}
	\G(k,q)=\G(k',q) \qquad \text{where} \qquad k'=\gcd(k,q-1),
\end{equation}
it is common to assume that $k\mid q-1$. 
Notice that $\G(k,q)$ is an $n$-regular graph with $n=\tfrac{q-1}k$. 
The graph $\G(k,q)$ is undirected either if $q$ is even or if $k$ divides $\tfrac{q-1}2$ when $p$ is odd (equivalently if $n$ is even when $p$ is odd) 
and it is connected if $n$ is a primitive divisor of $q-1$.  
The GP-graphs have been extensively studied in the few past years (see for instance \cite{LP}, \cite{PP}, \cite{PV6}, \cite{PV7}, \cite{PV4}, \cite{PV3}, \cite{PV1}, \cite{PV8}, \cite{PV19}, \cite{V}, \cite{Yip}).

\subsection*{Outline and results}
We now discuss the structure of the paper. 
Briefly, in Section~\ref{sec: diagonal GPs} we introduce diagonal GP-graphs, in Sections \ref{sec: SDE and walks}--\ref{sec: Homogeneous SDEs and spectrum} we study the number of solutions of general systems of monic diagonal equations over finite fields while in Sections \ref{sec: Hermitian}--\ref{sec: hypergeometric} we study the number of solutions of the particular case of Hermitian-form systems of diagonal equations.
	
More precisely, 
in Section \ref{sec: diagonal GPs}, we study the basic structure of the class of diagonal GP-graphs defined above in \eqref{eq: diag GPs}.
In Theorem \ref{teo: Spec G(K,q)} we give the spectrum of the graphs $\G(\kappa, q)$ in terms of Weil sums. Namely, the eigenvalues are given by 
	\begin{equation}\label{eq: eigenvalues}
	\lambda_{\alpha,\kappa,q}  = \chi_\alpha(R_\kappa) = \sum_{(x_{1},\ldots, x_{m}) \in R_{\kappa}} \chi_{\alpha_{1}}(x_{1}) \cdots \chi_{\alpha_{m}}(x_{m}) =\tfrac{1}{d_\kappa} \big( \mathcal{W}_\chi(f_{\alpha,\kappa}) -1 \big) 
	\end{equation}
with $\alpha \in (\ff_q)^m$, 
	$$ 
		d_\kappa = \gcd(\kappa,q-1) = \gcd(k_1,\ldots,k_m,q-1), 
	$$ 
and where $\mathcal{W}_\chi(f_{\alpha,\kappa}) = \sum_{x \in \ff_q} \chi(f_{\alpha,\kappa}(x))$ is the Weil sum 
associated to the canonical additive character $\chi$ of $\ff_q$ and the polynomial 
	$$ f_{\alpha,\kappa}(x) = \alpha_1 x^{k_1} + \cdots + \alpha_m x^{k_m} \in \ff_q[x]. $$ 
As an application, in Proposition  \ref{prop: Ihara Zeta GP graph} we give the Ihara zeta function of the diagonal GP-graphs in terms of Weil sums.

\sk

In Section \ref{sec: SDE and walks}, we consider monic systems of diagonal equations, that is systems 
$\mathcal{S}_{m\times s, q}(\kappa, \beta)$ as in \eqref{eq: gen SDE} where $\alpha_{ij}=1$ for every $i$ and $j$. 
In Lemma \ref{lema: walks} we relate the number of solutions of $\mathcal{S}_{m \times s, q}(\kappa, \beta)$ with the number $w_\G(s;0,\beta)$ of $s$-walks from $0$ to $\beta$ of the graph $\G=\G(\kappa,q)$. 
In Proposition~\ref{prop: recursion of Nmxsq} we give recursive formulas for  
the number of solutions $\mathcal{S}_{m \times s, q}(\kappa, w-v)$ with $w,v \in (\ff_q^*)^\ell$ in terms of the number of solutions of the smaller systems $\mathcal{S}_{m \times \ell, q}(\kappa, w-v)$ for $1\le \ell \le s$.
Then, in Theorems \ref{thm: Nmsq from walks} and \ref{teo: Nmsq no-homogeneo con matrices} we give the number of solutions $N_{m \times s,q}(\kappa, \beta)$ of the system $\mathcal{S}_{m \times s, q}(\kappa, \beta)$ in two different ways:
$$ N_{m \times s,q}(\kappa, \beta) =
\delta_{\beta,0} + \sum\limits_{\ell=1}^{s} \tbinom{s}{\ell} \, d_{\kappa}^{\ell} \, w_{\G}(\ell;0,\beta) = \big( (I_{q^m}+d_{\kappa} A_{\G})^{s}\big)_{0,\beta},$$
where $A_{\G}$ is the adjacency matrix of $\G$.
Then, we show the relation between the number of solutions and the Ihara zeta function, via the number of closed walks. 
In fact, in Proposition \ref{prop: Ihara and solutions} we give the logarithmic expansion $\log \zeta_\Gamma(u)$ of the Ihara zeta function of $\Gamma=\G(\kappa,q)$, which is completely determined by the number of solutions $N_{m \times \ell, q}(\kappa)$.

\sk

Next, in Section \ref{sec: Homogeneous SDEs and spectrum} 
we study the number of solutions of monic SDEs in terms of the spectrum of the diagonal GP-graphs. 
In fact, in Theorem \ref{teo: Nmsq chars y spec} we obtain an expression for the number of solutions of a general system in terms of characters $\chi_\beta$ of $\ff_{q^m}$ and eigenvalues $\lambda_\alpha$ of $\G(\kappa,q)$ with $\alpha, \beta \in \ff_{q^m}$, while in Corollary \ref{coro: weil bound} we give an  expression for this number in terms of Weil sums; namely 
	$$ N_{m \times s,q}(\kappa, \beta) = \tfrac{1}{q^m} \sum_{\alpha \in \ff_{q^m}}  \overline{\chi_\beta(\alpha)} \, \big( 1+d_\kappa \lambda_\alpha \big)^s = \tfrac{1}{q^{m}} \sum_{\alpha \in \ff_q^m} \overline{\chi(\alpha \cdot \beta)} \big( \mathcal{W}_\chi (f_{\alpha,\kappa}) \big)^{s}.$$
From this, putting $k_1 \le \cdots \le k_m$ and using the Weil bound for Weil sums, we get the upper bound  
	$$	
	\left| N_{m \times s, q} (\kappa,\beta) - q^{s-m} \right| \le (q^m-1) q^{\frac s2 -m} (k_m-1)^s.
	$$
		
In the last four sections, we consider homogeneous systems of monic diagonal equations of a special kind; 
first we treat the general problem and then we deal with small systems in more detail. 
More precisely, in Section \ref{sec: Hermitian} we consider the systems $\mathcal{S}_{m\times s, q}(\kappa_{\ell})$ with 
$\kappa_{\ell}=(q+1,q^3+1,\ldots,q^{2m-1}+1)$, that is 
	$$ X_1^{q^{\ell_i}+1} + \cdots + X_s^{q^{\ell_i}+1} = 0 \qquad (1\le i \le m)$$ 
with $(\ell_1,\ldots,\ell_m)=(1,3,\ldots,2m-1)$. We call this the homogeneous monic Hermitian-form system.  
By using Theorem \ref{teo: M with Spec} and the known spectrum of Hermitian-form graphs, which turn out to be diagonal GP-graphs with $\G(\kappa_{\ell},q)$,
in Theorem \ref{teo: hermitiano diagonal} we obtain the number of solutions of $\mathcal{S}_{m\times s, q}(\kappa_{\ell})$ in $(\ff_{q^{4m}})^s$, which is given by 
\begin{equation} \label{eq: Nsm alternativa}
	N_{m \times s,q^{4m}}(\kappa_{\ell}) = q^{-2m(2m-2s)} \Bigg( 1+\sum_{j=1}^{2m}  (-q)^{-js} \, \prod_{i=0}^{j-1} \frac{q^{4m}-q^{2i}}{q^{j}-(-1)^{i+j}q^{i}} \Bigg).
\end{equation}	
In what follows, we will use the abbreviation $N_m(s,q)$ for $N_{m \times s,q^{4m}}(\kappa_{\ell})$.
As a direct application, we give the Ihara zeta function of Hermitian-form graphs (see Corollary \ref{cor: Ihara Hermitian}).

Then, in Section \ref{sec: Small HF-SDE} we consider the previous system of monic homogeneous Hermitian-form diagonal equations with one, two or three equations. In all cases we give summation formulas for the number of solutions (see Propositions \ref{prop: N_1xs}, \ref{prop: N_2xs}, \ref{prop: N_3xs}) and some examples. 
Moreover, for $m=1,2$ we give closed formulas. For instance, in Proposition \ref{prop: N_1xs} we obtain
that the number of solutions in $(\ff_{q^4})^s$ of the Hermitian-form diagonal equation $X_1^{q+1} + \cdots + X_s^{q+1}=0$ 
is given by
$$	N_1(s,q) = N_{1 \times s,q^4} (q+1) = q^{2s-3} \big\{ q^{2s-1} + (q-1) (q^2+1) \big((-1)^{s}q^{s-1} +1 \big)  \big\}.$$
In Proposition \ref{prop: N_2xs} we give the corresponding number $N_2(s,q)$ for homogeneous systems of two monic Hermitian-form diagonal equations, the obtained expression is more involved.

Then, in Section \ref{sec: caso gral}, we deal with the dual case, that is any number $m$ of equations but with the number $s$ of variables fixed. We have that $N_m(1,q)=1$ trivially and in Proposition~\ref{prop: Nm2q Nm3q} we show that the exact number of solutions are
$$	N_m(2,q) = q(q^{4m} + q^{4m-1} - 1) \qquad \text{and} \qquad 
N_m(3,q) = q^3(q^{4m} + q^{4m-3} - 1).$$	  
We then obtain expressions for $N_m(s,q)$ for $1\le s \le 12$.	
To conclude the section, we come to two main results. In Theorem \ref{thm: recursive Nmsq} we obtain a two-term recursion for the numbers $N_m(s,q)$ for any $m\ge 1$ and any $s\ge 2$, namely
$$ N_m(s,q) = (-q)^{s-1} N_m(s-1,q) + q^{4m} \big\{1 - (-q)^{s-1} \big\} N_m(s-2,q), $$
with initial conditions $N_m(0,q) = 1$ and $N_m(1,q) = 1$.
Moreover, in Theorem \ref{thm: sums for Nmsq} we show that for any $m, s \in \N$, the number $N_m(s,q)$ is given by 
	$$	N_m(s,q) = \Big( \sum_{j=0}^{k-1} (-1)^j q^{j(2j-(-1)^k)} q^{4m(k-j)} \tcbinom{k}{j}_{q^2} \mathcal{F}_k \Big) + (-1)^k q^{k(2k-(-1)^k)} $$
where $k=\lfloor \tfrac s2 \rfloor$, the symbol ${k \brack j}_{q^2}$ stands for the $q^2$-binomial coefficients, $F_{\ell}=q^{2\ell-1}+1$ and  
$\mathcal{F}_k = F_{j+1} \cdots F_k$ if $s$ is even and $\mathcal{F}_k = F_{j+2} \cdots F_k$ if $s$ is odd, thus obtaining the more general expression for $N_m(s,q)$ for any $m,s,q$.

Finally, in Section \ref{sec: hypergeometric} we study the hypergeometric connection of the problem of finding the number of solutions of the monic homogeneous Hermitian-form SDEs.
In Theorem~\ref{prop: hipergeometrica} we give the surprising expression 
$$ N_m(s,q) = q^{4m(s-m)} \ _2\phi_0 \left( \begin{matrix} q^{-2m}, -q^{-2m} \\ - \end{matrix} ; -q, -(-q)^{4m-s} \right)$$
in terms of a Heine $q$-hypergeometric series of the form $_2\phi_0 \left( \begin{smallmatrix} a, b \\ - \end{smallmatrix}; q, z \right)$.

\section{Diagonal generalized Paley graphs} \label{sec: diagonal GPs}
In this section, we give the basic structural properties of the diagonal GP-graphs $\G(\kappa,q)$ and obtain a formula for the eigenvalues in terms of Weil sums $W_\chi(f_{\alpha,\kappa})$ for certain polynomials $f_{\alpha,\kappa}(x) \in \ff_q[x]$ with $\alpha \in (\ff_q)^m$.

We begin by setting some notations. 
Given a vector of positive integers
\begin{equation} \label{eq: vector kappa}
	\kappa=(k_1,\ldots,k_m)\in \N^m
\end{equation} 
and $q$ a prime power, we define the number 
\begin{equation} \label{eq: k hat}
	d_\kappa = \gcd(\kappa,q-1) = \gcd(k_1,\ldots,k_m,q-1)
\end{equation}
and the set 
	\begin{equation}\label{eq: Uk}
		U_\kappa = \{x\in \ff_{q}^*:\, (x^{k_1},\ldots,x^{k_m})=(1,\ldots,1)\}.
	\end{equation}
	
The following lemma gives the sizes of $R_\kappa$ and $U_\kappa$, and hence the degree of regularity of the graphs $\G(\kappa,q)$, which we will need in the proof of the next result.

\begin{lem} \label{lem: size Rk} 
Let $\kappa = (k_1,\ldots,k_m)\in \N^{m}$ and let $q$ be a prime power.
If $R_{\kappa}$, $d_\kappa$ and $U_{\kappa}$ are as in \eqref{eq: diag GPs}, \eqref{eq: k hat} and \eqref{eq: Uk} respectively, then we have that 
	$$ |R_\kappa| = \tfrac{q-1}{d_\kappa} \qquad \text{and} \qquad |U_\kappa| = d_\kappa.$$
\end{lem}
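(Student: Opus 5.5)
The plan is to view everything through the group homomorphism
$$ \varphi_\kappa : \ff_q^* \to (\ff_q^*)^m, \qquad \varphi_\kappa(x) = (x^{k_1},\ldots,x^{k_m}). $$
Since each coordinate map $x\mapsto x^{k_i}$ is a homomorphism of the multiplicative group, so is $\varphi_\kappa$. By definition $R_\kappa = \varphi_\kappa(\ff_q^*)$ is its image and $U_\kappa = \ker \varphi_\kappa$ is its kernel. The first isomorphism theorem then gives $|R_\kappa| = (q-1)/|U_\kappa|$. Hence both claims follow once we show $|U_\kappa| = d_\kappa$.

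To compute the kernel, we use that $\ff_q^*$ is cyclic of order $q-1$. An element $x$ satisfies $x^{k_i}=1$ exactly when its order divides $k_i$; since the order also divides $q-1$, this is equivalent to the order dividing $\gcd(k_i,q-1)$. So $x \in U_\kappa$ if and only if the order of $x$ divides $\gcd(k_1,\ldots,k_m,q-1) = d_\kappa$, that is, if and only if $x^{d_\kappa}=1$. Equivalently, one can argue via Bézout: $d_\kappa = a_0(q-1)+\sum_i a_i k_i$ for some integers $a_i$, so $x^{k_i}=1$ for all $i$ forces $x^{d_\kappa}=1$, and the converse is clear since $d_\kappa \mid k_i$. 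In a cyclic group of order $q-1$, and since $d_\kappa \mid q-1$, the equation $x^{d_\kappa}=1$ has exactly $d_\kappa$ solutions (the unique subgroup of order $d_\kappa$). Hence $|U_\kappa| = d_\kappa$, and therefore $|R_\kappa| = (q-1)/d_\kappa$.

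There is no real obstacle here. The only point needing care is the reduction of the simultaneous conditions $x^{k_i}=1$ to the single condition $x^{d_\kappa}=1$, which is handled by the Bézout (or order-divisibility) argument above. It is also worth noting that the fibres of $\varphi_\kappa$ are cosets of $U_\kappa$, so each element of $R_\kappa$ is hit exactly $d_\kappa$ times. This is the form in which the lemma will be used later, for example to get the factor $\tfrac{1}{d_\kappa}$ in the eigenvalue formula \eqref{eq: eigenvalues}.
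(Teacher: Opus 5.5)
Your proof is correct, but it runs in the opposite direction from the paper's. The paper fixes a primitive element $\omega$ and notes that $R_\kappa$ is the cyclic group generated by $g=(\omega^{k_1},\ldots,\omega^{k_m})$. It then computes the order of $g$ directly: $g^{t}=1$ for $t=\tfrac{q-1}{d_\kappa}$, and conversely $g^{\ell}=1$ forces $t\mid \ell$. This gives $|R_\kappa|$ first, and $U_\kappa$ is then read off as $\{\omega^{jt} : 1\le j\le d_\kappa\}$. You instead compute the kernel $U_\kappa$ first, by reducing the simultaneous conditions $x^{k_i}=1$ to the single condition $x^{d_\kappa}=1$, and then get $|R_\kappa|$ from the first isomorphism theorem. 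Both arguments rest on the cyclicity of $\ff_q^*$ and are about equally short.

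The paper's route exhibits an explicit generator of $R_\kappa$ together with its order, and that is exactly what is reused in Proposition~\ref{prop: diag GPs}$(a)$. Your route makes the fibre structure explicit: every element of $R_\kappa$ has exactly $d_\kappa$ preimages, namely a coset of $U_\kappa$. That is the form in which the lemma is actually invoked in Theorem~\ref{teo: Spec G(K,q)} and Lemma~\ref{lema: walks}, and the paper only uses it implicitly. Your kernel computation also avoids a slightly loose step in the paper's order argument, where $q-1\mid \ell k_i$ is turned into $\tfrac{q-1}{\ell}\mid k_i$ without first reducing to the case $\ell\mid q-1$.
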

	
\begin{proof}
Let $\omega$ be a primitive element of $\ff_{q}$ and notice that in this case we have that 
	$$ R_{\kappa} = \langle (\omega^{k_1},\ldots,\omega^{k_m})\rangle. $$
So, it is enough to compute the multiplicative order of the element 
	$(\omega^{k_1},\ldots,\omega^{k_m})$ in the ring $(\ff_{q})^m$.
First, observe that if we put $t = \frac{q-1}{d_{\kappa}}$, 
then we have  
	$$(\omega^{k_1},\ldots,\omega^{k_m})^t = (1,\ldots,1) $$ 
since $n_i = \frac{q-1}{\gcd(k_i,q-1)} \mid t$ and $\omega^{n_i k_i}=1$ for all $i=1,\ldots,m$. 
		
On the other hand,
if $(\omega^{k_1},\ldots,\omega^{k_m})^{\ell}=(1,\ldots,1)$, then $\omega^{\ell k_i} =1$ for all $i=1,\ldots,m$.
Thus, we obtain that $q-1\mid \ell k_i$  and hence $\frac{q-1}{\ell}\mid k_i$ for all $i=1,\ldots,m$.
By definition of the greatest common divisor, we have that
	$\frac{q-1}{\ell}\mid \gcd(k_1,\ldots,k_m,q-1) = \tfrac{q-1}{t}$
which implies that
	$t \mid \ell$ and so we have that
	$$ord_{(\ff_q)^m}(\omega^{k_1},\ldots,\omega^{k_m}) = t = \tfrac{q-1}{\gcd(k_1,\ldots,k_m,q-1)}.$$
Therefore, $|R_{\kappa}| = \frac{q-1}{d_{\kappa}}$ as asserted.
As a direct consequence, the diagonal Cayley graph $\G(\kappa,q)$ has regularity degree equal to $\frac{q-1}{d_{\kappa}}$, as desired.
		
Finally, if $\omega$ is a primitive element of $\ff_{q}$, the order of $(\omega^{k_1},\ldots\omega^{k_{m}})$ is $\frac{q-1}{d_{\kappa}}$ and hence
	$$ U_{\kappa}= \{ \omega^{t \frac{q-1}{d_\kappa}}: t=1,\ldots, \kappa\}. $$
Therefore $|U_{\kappa}|=d_{\kappa}$ as desired.
\end{proof}
	
We now give the basic structural properties of these graphs. We will need the following notation. An integer $k$ is called a \textit{primitive divisor} of $p^r-1$, with $p$ prime and $r \in \N$, if $k\mid p^r-1$ but $k \nmid p^a-1$ for any $1\le a \le r-1$. This is denoted 
\begin{equation} \label{eq: primitive divisor}
	k \dagger p^r-1.
\end{equation}

\begin{prop} \label{prop: diag GPs}
	Consider the diagonal GP-graph $\G(\kappa,q)$ with $\kappa=(k_1,\ldots,k_m)\in \N^m$. Then, $\G(\kappa,q)$ is a loopless $\frac{q-1}{d_\kappa}$-regular graph. Moreover, 
	\begin{enumerate}[$(a)$]
		\item $\Gamma(\kappa,q)$ is undirected
			if and only if either $q$ is even, or $q$ is odd with $\tfrac{q-1}{d_\kappa}$ even and $\tfrac{k_i}{d_\kappa}$ odd for every $i=1,\ldots,m$. \sk 
		
		\item If $\G(\kappa,q)$ is connected then $\G(k_i,q)$ is connected for any $i=1,\ldots,m$; that is we have  $\frac{q-1}{\gcd(k_i,q-1)} \dagger q-1$ for any $i=1,\ldots,m$.
	\end{enumerate}
\end{prop}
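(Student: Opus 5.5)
The regularity and looplessness come straight from the Cayley graph structure. The graph $\G(\kappa,q)=\Cay(\ff_q^m,-R_\kappa)$ has out-degree $|R_\kappa|$ at every vertex, and by Lemma~\ref{lem: size Rk} this is $\tfrac{q-1}{d_\kappa}$. The in-degree is the same, since $v\mapsto v-r$ is a bijection. There are no loops because $0\notin R_\kappa$: every element of $R_\kappa$ has first coordinate $x^{k_1}\neq 0$.

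For $(a)$, I will use that $\G(\kappa,q)$ is undirected if and only if $R_\kappa=-R_\kappa$. If $q$ is even, then $-1=1$ and this is automatic. If $q$ is odd, fix a primitive element $\omega$ and let $g=(\omega^{k_1},\ldots,\omega^{k_m})$. As in the proof of Lemma~\ref{lem: size Rk}, $R_\kappa=\langle g\rangle$ is cyclic of order $t=\tfrac{q-1}{d_\kappa}$. Since $R_\kappa$ is a group, $R_\kappa=-R_\kappa$ is equivalent to $-\mathbf 1=(-1,\ldots,-1)\in R_\kappa$, i.e.\ to $g^\ell=-\mathbf 1$ for some $\ell$. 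Writing $-1=\omega^{(q-1)/2}$, this becomes the system of congruences
\[
\ell k_i\equiv \tfrac{q-1}{2} \pmod{q-1}\qquad (1\le i\le m).
\]
If such an $\ell$ exists, then $g^{2\ell}=\mathbf 1$ while $g^\ell\neq \mathbf 1$, so $t$ is even. Moreover $\ell\equiv t/2 \pmod t$, since $-\mathbf 1$ is the unique element of order $2$ in the cyclic group $\langle g\rangle$. Substituting $\ell=t/2+jt$ gives $\ell k_i = \tfrac{q-1}{2}\cdot\tfrac{k_i}{d_\kappa}+jt k_i$. The term $jtk_i=j(q-1)\tfrac{k_i}{d_\kappa}$ is a multiple of $q-1$, so the congruence holds exactly when $\tfrac{q-1}{2}\cdot\tfrac{k_i}{d_\kappa}\equiv \tfrac{q-1}{2}\pmod{q-1}$, i.e.\ when $\tfrac{k_i}{d_\kappa}$ is odd. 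Conversely, if $t$ is even and every $\tfrac{k_i}{d_\kappa}$ is odd, then $\ell=t/2$ satisfies all the congruences. This parity bookkeeping, including the uniqueness of the order-$2$ element that pins down $\ell$, is the only step needing care. I expect it to be the main obstacle, but it is elementary.

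For $(b)$, I will use the coordinate projection $\pi_i:\ff_q^m\to\ff_q$. It is a surjective group homomorphism with $\pi_i(R_\kappa)=R_{k_i}$. Connectivity of a Cayley digraph on a finite group is equivalent to the connection set generating the group. If $R_\kappa$ generates $\ff_q^m$ additively, then $R_{k_i}=\pi_i(R_\kappa)$ generates $\ff_q$, so $\G(k_i,q)$ is connected. The final reformulation follows from the criterion recalled in the introduction: $\G(k,q)$ is connected if and only if $n=\tfrac{q-1}{\gcd(k,q-1)}$ is a primitive divisor of $q-1$. This holds because the additive span of $R_k$ is the subfield generated by the $n$-th roots of unity, namely $\ff_{p^a}$ with $a$ minimal such that $n\mid p^a-1$. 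I would cite this standard fact rather than reprove it.
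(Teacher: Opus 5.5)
Your proposal is correct and follows essentially the same route as the paper: regularity and looplessness come from Lemma~\ref{lem: size Rk} and $0\notin R_\kappa$. For $(a)$ you reduce to $(-1,\ldots,-1)\in R_\kappa=\langle g\rangle$, identify it with the unique order-two element $g^{t/2}$, and check the parity of $k_i/d_\kappa$; your congruence computation is just the paper's evaluation of $g^{h/2}$ written out. For $(b)$ you also justify the implication that the paper only asserts, by noting that the projection $\pi_i$ maps the generating set $R_\kappa$ onto $R_{k_i}$. You then use the same primitive-divisor criterion for connectivity of $\G(k_i,q)$.
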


\begin{proof}
	The graph $\G(\kappa,q)$ is $\frac{q-1}{d_\kappa}$-regular, by Lemma \ref{lem: size Rk}, and it has no loops because $0 \notin U_{k_i}$ for any $i=1,\ldots,m$.
	
	\noindent ($a$) 
	Recall that $\Gamma(\kappa,q)$ is undirected if and only if
		$R_\kappa=-R_\kappa$. If $q$ is even, this condition holds automatically.
		
		Suppose that $q$ is odd. Under coordinatewise multiplication,
		$R_\kappa$ is a cyclic subgroup of $(\mathbb{F}_q^*)^m$.
		Since $(1,\ldots,1)\in R_\kappa$, we have that
		$$
		R_\kappa=-R_\kappa
		\quad\Leftrightarrow\quad
		(-1,\ldots,-1)\in R_\kappa.
		$$
		Let $\omega$ be a primitive element of $\mathbb{F}_q$,
		then
		$$
		R_\kappa=\langle g\rangle,
		\qquad
		g=(\omega^{k_1},\ldots,\omega^{k_m}),
		\qquad
		\operatorname{ord}(g)=h:=\tfrac{q-1}{d_\kappa}.
		$$
		Since $(-1,\ldots,-1)$ has order two, it belongs to
		$R_{\kappa}$ if and only if $h$ is even and  $g^{h/2}=(-1,\ldots,-1)$. 
		Indeed, when $h$ is even, $g^{h/2}$ is the unique
		element of order two in $\langle g\rangle$,
		moreover
		$$
		g^{h/2}
		=
		\Big(
		(\omega^{(q-1)/2})^{k_1/d_\kappa},
		\ldots,
		(\omega^{(q-1)/2})^{k_m/d_\kappa}
		\Big)
		=
		\left(
		(-1)^{k_1/d_\kappa},\ldots,
		(-1)^{k_m/d_\kappa}
		\right).
		$$
		Thus $g^{h/2}=(-1,\ldots,-1)$ if and only if
		$k_i/d_\kappa$ is odd for every $i$, proving the claim.
	
	\noindent ($b$) The connectedness of $\G(\kappa,q)$ implies the connectedness of $\G(k_i,q)$ for any $i=1,\ldots,m$ and a GP-graph $\G(k,q)$ is connected if and only if $\frac{q-1}{\gcd(k,q-1)}\dagger q-1$.
\end{proof}

\subsection*{Weil sums and spectrum}
We recall that given an additive character $\chi$ of $\ff_q$ and a polynomial $f\in \ff_q[x]$, the \textit{Weil sum} associated to $\chi$ and $f$ is defined by 
\begin{equation} \label{eq: Weil sum}
	\mathcal{W}_\chi(f) = \sum_{a \in \ff_q} \chi(f(a)).
\end{equation}
Consider the \textit{canonical} additive character $\chi: \ff_q \rightarrow \mathbb{S}^1$ of $\ff_q$ given by
\begin{equation} \label{eq: canonical character}
	\chi(x) = e^{\frac{2\pi i}{p} \Tr_{q/p}(x)},
\end{equation} 
where $\Tr_{q/p} : \ff_q \rightarrow \ff_p$ is the trace map given by 
	$$ \Tr_{q/p}(x) = x+x^p+x^{p^2}+\cdots +x^{p^{r-1}}. $$

The following result states that the eigenvalues of this kind of graphs can be put in terms of Weil sums in a simple way.
	
\begin{thm} \label{teo: Spec G(K,q)} 
Let $\kappa = (k_1,\ldots,k_m)\in \N^{m}$ and $q$ a prime power.
Then, the eigenvalues of the diagonal GP-graph $\G(\kappa,q)$ are parametrized by the vectors 
$ \alpha = (\alpha_1, \ldots, \alpha_m) \in (\ff_{q})^m$ and are of the form  
\begin{equation} \label{eq: autovalores de G(kapkka,q)}
	 \lambda_{\alpha,\kappa,q} = \tfrac{1}{d_\kappa} \big( \mathcal{W}_\chi(f_{\alpha,\kappa}) -1 \big)  
\end{equation}	
where 
$\chi$ is the canonical additive character of $\ff_q$ and $f_{\alpha,\kappa}(x) = \alpha_1 x^{k_1} + \cdots + \alpha_m x^{k_m} \in \ff_q[x]$.
\end{thm}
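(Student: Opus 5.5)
We will use the standard description of the spectrum of a Cayley graph on a finite abelian group: for $\Cay(G,S)$, every character $\psi$ of $G$ is an eigenvector of the adjacency matrix, with eigenvalue $\psi(S)=\sum_{s\in S}\psi(s)$. As $\psi$ runs over the $|G|$ characters, these eigenvectors are orthogonal, so this is the full spectrum.

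For $G=(\ff_q)^m$, the characters are $\chi_\alpha(y)=\chi(\alpha\cdot y)=\chi_{\alpha_1}(y_1)\cdots\chi_{\alpha_m}(y_m)$ with $\alpha\in(\ff_q)^m$. Here $\chi$ is the canonical additive character and $\chi_a(y)=\chi(ay)$. Thus the eigenvalues of $\G(\kappa,q)=\Cay(\ff_q^m,-R_\kappa)$ are $\chi_\alpha(-R_\kappa)$. Since $-R_\kappa=\{-r : r\in R_\kappa\}$ and $\chi_\alpha(-y)=\chi_{-\alpha}(y)$, replacing $\alpha$ by $-\alpha$ reindexes the same multiset. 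We may therefore parametrize the eigenvalues as $\lambda_{\alpha}=\chi_\alpha(R_\kappa)$, matching \eqref{eq: eigenvalues}. Alternatively, $\chi_\alpha(-R_\kappa)=\overline{\chi_\alpha(R_\kappa)}$, and the spectrum is closed under this relabelling. We must note that $\chi_\alpha(R_\kappa)$ is computed with $R_\kappa$ viewed as a set, so each element counts once.

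The main step is to pass from the set $R_\kappa$ to a sum over $x\in\ff_q^*$. Consider the map $\varphi:\ff_q^*\to R_\kappa$, $x\mapsto(x^{k_1},\ldots,x^{k_m})$. It is a surjective group homomorphism onto the cyclic subgroup $R_\kappa\subseteq(\ff_q^*)^m$, and its kernel is $U_\kappa$. By Lemma~\ref{lem: size Rk}, $|U_\kappa|=d_\kappa$, so every fiber of $\varphi$ has exactly $d_\kappa$ elements. Hence
$$\chi_\alpha(R_\kappa)=\tfrac{1}{d_\kappa}\sum_{x\in\ff_q^*}\chi\big(\alpha_1x^{k_1}+\cdots+\alpha_mx^{k_m}\big)=\tfrac{1}{d_\kappa}\sum_{x\in\ff_q^*}\chi(f_{\alpha,\kappa}(x)).$$

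Finally, we complete the sum to all of $\ff_q$. Since each $k_i\ge1$, we have $f_{\alpha,\kappa}(0)=0$ and $\chi(0)=1$. So $\sum_{x\in\ff_q^*}\chi(f_{\alpha,\kappa}(x))=\mathcal{W}_\chi(f_{\alpha,\kappa})-1$, which gives the formula $\lambda_{\alpha,\kappa,q}=\tfrac{1}{d_\kappa}(\mathcal{W}_\chi(f_{\alpha,\kappa})-1)$.

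The only delicate points are bookkeeping. The first is the sign convention coming from the reversed Cayley graph $-R_\kappa$, which we handle by the reindexing $\alpha\mapsto-\alpha$. The second is that the fibers of $\varphi$ have uniform size $d_\kappa$, which follows from Lemma~\ref{lem: size Rk} together with the homomorphism property. We expect no step to be a serious obstacle.
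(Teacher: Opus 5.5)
Your proposal is correct and follows essentially the same route as the paper. Both arguments take the Cayley-graph eigenvalues $\chi_\alpha(S)$ for the product characters of $(\ff_q)^m$, collapse the $d_\kappa$-to-one map $x\mapsto(x^{k_1},\ldots,x^{k_m})$ using $U_\kappa$ and Lemma~\ref{lem: size Rk}, and then complete the sum with the term $x=0$. The one difference is that you reindex $\alpha\mapsto-\alpha$ to pass from the actual connection set $-R_\kappa$ to $R_\kappa$, whereas the paper's proof glosses over this by treating $R_\kappa$ directly as the connection set.
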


\begin{proof}
Recall that for an abelian group $G$, the eigenvalues of the Cayley graph $Cay(G,S)$ are given by 
	$$ \lambda_\chi = \chi(S) = \sum_{s\in S} \chi(s) $$
where $\chi$ is a character of the group $G$ (see for instance \S2 in \cite{LZ}).

Notice that the characters $\chi_\alpha$, with $\alpha= (\alpha_{1},\ldots, \alpha_m) \in (\ff_q)^m$, of the abelian group $(\ff_q)^{m}$ are given by the products 
	$$
		\chi_{\alpha}(x) = (\chi_{\alpha_{1}}\cdots \chi_{\alpha_{m}})(x_1,\ldots,x_m) = \chi_{\alpha_1}(x_1) \cdots \chi_{\alpha_{m}}(x_m)
	$$
for any $x=(x_1,\ldots,x_m) \in (\ff_q)^m$ and where for any $\alpha_j \in \ff_{q}$, $\chi_{\alpha_j}$ is the additive character of $\ff_q$ associated to $\alpha_j$ given by
	$$ \chi_{\alpha_j}(x) = e^{\frac{2\pi i}{p} \Tr_{q/p}(\alpha_j x)}.$$

Since the connection set of the Cayley graph $\G(\kappa,q)$ is $R_{\kappa} = \{(x^{k_1}\ldots,x^{k_m}): x\in \ff_{q}^*\}$, 
we obtain that the eigenvalues of the diagonal GP-graphs are given by
\begin{equation} \label{eq: eigenvalues GP characters}
	\lambda_{\alpha,\kappa,q} = \chi_\alpha(R_\kappa) = \sum_{(x_{1},\ldots, x_{m}) \in R_{\kappa}} \chi_{\alpha_{1}}(x_{1}) \cdots \chi_{\alpha_{m}}(x_{m}).	
\end{equation}
Now, by the definition of the sets $R_{\kappa}$ and $U_k$ (see \eqref{eq: diag GPs} and \eqref{eq: Uk}), we have that 
	$$ (x^{k_1},\ldots, x^{k_s}) = (y^{k_1},\ldots, y^{k_s}) \quad \Leftrightarrow \quad 
		\big( (\tfrac{x}{y})^{k_1},\ldots,(\tfrac{x}{y})^{k_s} \big) = (1,\ldots,1) \quad \Leftrightarrow \quad
		\tfrac{x}{y} \in U_{\kappa}.$$
Thus, using this, Lemma~\ref{lem: size Rk} 
implies that 
	$$	\sum_{(x_{1},\ldots, x_{m}) \in R_{\kappa}} \chi_{\alpha_{1}}(x_{1}) \cdots \chi_{\alpha_{m}}(x_{m})  = \tfrac{1}{d_\kappa} \sum_{x\in \ff_{q}^*} \chi_{\alpha_1}(x^{k_1}) \cdots \chi_{\alpha_{m}}(x^{k_m}). $$   
That is, we have 
	$$ \lambda_{\alpha,\kappa,q} = \tfrac{1}{d_\kappa} \sum_{x\in \ff_{q}^*} \chi(\alpha_1 x^{k_1} + \cdots + \alpha_m x^{k_m}). $$
Finally, since 
$$	\mathcal{W}_\chi(f_{\alpha,\kappa}) = \sum_{x \in \ff_q} \chi(f_{\alpha,\kappa}(x)) = 1+  \sum_{x \in \ff_q^*} \chi(f_{\alpha,\kappa}(x))$$
we get \eqref{eq: autovalores de G(kapkka,q)}, as desired.
\end{proof}

\subsection*{Ihara zeta function}
We now relate the structural properties of diagonal GP-graphs to their Ihara zeta function. The Ihara zeta function of a graph is a discrete analogue of the Riemann zeta function, which encodes the lengths of all prime, non-backtracking, closed walks in the graph. 

Let $G = (V,E)$ be a finite connected undirected graph. A closed walk is called \textit{non-backtracking} if it does not contain any sequence of vertices of the form $v \to w \to v$. It is called \textit{prime} if it is not merely the repetition of a strictly shorter closed walk. Two prime walks are equivalent if they differ only by their starting vertex. The \textit{Ihara zeta function} of $G$ is formally defined as
\begin{equation} \label{eq: Ihara definition}
	\zeta_G(u) = \prod_{[C]} (1 - u^{L(C)})^{-1},
\end{equation}
where $[C]$ runs over all equivalence classes of prime, non-backtracking, closed walks in $G$, and $L(C)$ denotes the length of the cycle $C$. 
The graph-theoretic formulation of this zeta function for regular graphs was pioneered by Sunada \cite{Sunada1986}. For a comprehensive treatment and its generalization to irregular graphs, we refer the reader to the seminal work of Bass \cite{Bass1992} and the extensive monograph by Terras \cite{Terras2010}.

By the celebrated Ihara-Bass determinant formula (see \cite{Bass1992}), if $G$ is a $k$-regular graph with adjacency matrix $A$ 
its zeta function evaluates to the rational function
\begin{equation} \label{eq: Bass formula}
	\zeta_G(u)^{-1} = (1-u^2)^{|E|-|V|} \det \big( I - u A + u^2(k-1)I \big).
\end{equation}

By imposing the conditions for undirectedness obtained in Proposition 2.2, we can give a fully explicit expression for the Ihara zeta function of the diagonal GP-graphs in terms of Weil sums.

\begin{prop} \label{prop: Ihara Zeta GP graph}
Let $m \in \mathbb{N}$ and $\kappa \in \mathbb{N}^m$. Let $q$ be a prime power such that either $q$ is even, or $q$ is odd with $D=\frac{q-1}{d_{\kappa}}$ and $\frac{k_i}{d_\kappa}$ odd for all $i=1,\dots,m$. Then, the Ihara zeta function of the undirected diagonal GP-graph $\Gamma = \Gamma(\kappa, q)$ is given by
	\begin{equation} \label{eq: Ihara GP graph}
		\zeta_{\Gamma}(u) = \dfrac{1}{(1 - u^2)^{\rho_\Gamma} \prod\limits_{\alpha \in \mathbb{F}_q^m} \big( 1 - u \lambda_{\alpha, \kappa, q} + u^2(\frac{q-1}{d_\kappa} - 1) \big)},
	\end{equation}
where $d_\kappa = \gcd(\kappa, q-1)$, the exponent $\rho_\Gamma$ is given by
	$$
	\rho_\Gamma = \tfrac{q^m(D-2)}{2}  ,
	$$
and the eigenvalues $\lambda_{\alpha, \kappa, q}$ are given in terms of the Weil sums $\mathcal{W}_\chi(f_{\alpha, \kappa})$ as in \eqref{eq: autovalores de G(kapkka,q)}.
\end{prop}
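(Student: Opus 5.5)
The plan is to apply the Ihara--Bass determinant formula \eqref{eq: Bass formula} directly to $\Gamma=\Gamma(\kappa,q)$, using the structural facts from Proposition~\ref{prop: diag GPs} and the spectrum from Theorem~\ref{teo: Spec G(K,q)}.

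First, under the stated hypotheses, part $(a)$ of Proposition~\ref{prop: diag GPs} says that $R_\kappa=-R_\kappa$. Hence $\Gamma$ is a simple undirected graph. The hypothesis should be read as ``$D$ even and $k_i/d_\kappa$ odd'', matching Proposition~\ref{prop: diag GPs}$(a)$. The same proposition, together with Lemma~\ref{lem: size Rk}, shows that $\Gamma$ is loopless and $D$-regular with $D=\frac{q-1}{d_\kappa}$.

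Next I count vertices and edges. The vertex set is $(\ff_q)^m$, so $|V|=q^m$. The handshake lemma gives $|E|=\tfrac{q^m D}{2}$. Therefore
$$|E|-|V| = \tfrac{q^mD}{2}-q^m=\tfrac{q^m(D-2)}{2}=\rho_\Gamma.$$

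Then I evaluate the determinant. Since $\Gamma$ is an undirected Cayley graph on the abelian group $(\ff_q)^m$, its adjacency matrix $A$ is real symmetric. It is simultaneously diagonalized by the characters $\chi_\alpha$, $\alpha\in\ff_q^m$, with eigenvalues $\lambda_{\alpha,\kappa,q}=\chi_\alpha(R_\kappa)$, counted with multiplicity exactly as $\alpha$ ranges over $\ff_q^m$ (repeated values are fine). The matrix $I-uA+u^2(D-1)I$ is a polynomial in $A$, so its determinant is
$$\prod_{\alpha\in\ff_q^m}\big(1-u\lambda_{\alpha,\kappa,q}+u^2(D-1)\big).$$
Substituting this and the exponent $\rho_\Gamma$ into \eqref{eq: Bass formula}, and inverting, gives \eqref{eq: Ihara GP graph}. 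Theorem~\ref{teo: Spec G(K,q)} then expresses each $\lambda_{\alpha,\kappa,q}$ through $\mathcal{W}_\chi(f_{\alpha,\kappa})$.

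The main obstacle is checking the hypotheses of Bass's formula, which is usually stated for connected graphs. If $\Gamma$ is disconnected, I would note that $\zeta$ is multiplicative over components and that the determinant formula holds componentwise. Each component is a $D$-regular graph whose edge and vertex counts add up to the totals above, so the formula persists. A secondary point is that the eigenvalues must be real here. This follows from $R_\kappa=-R_\kappa$, because then $\chi_\alpha(R_\kappa)=\overline{\chi_\alpha(R_\kappa)}$.
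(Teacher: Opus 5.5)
Your proposal is correct and follows essentially the same route as the paper. Both arguments get undirectedness from Proposition~\ref{prop: diag GPs}$(a)$ and $D$-regularity from Lemma~\ref{lem: size Rk}, then obtain $|E|-|V|=\rho_\Gamma$ from the handshake lemma, and finally evaluate the Ihara--Bass determinant as a product over the character eigenvalues of Theorem~\ref{teo: Spec G(K,q)}. Your extra points are valid refinements that the paper's proof leaves implicit: reading the hypothesis as ``$D$ even,'' applying Bass's formula componentwise when $\Gamma$ is disconnected, and noting that the eigenvalues are real because $R_\kappa=-R_\kappa$.
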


\begin{proof}
Under the parity hypotheses on $q$ and $k_i$, ($a$) in Proposition \ref{prop: diag GPs} ensures that $\Gamma = \Gamma(\kappa, q)$ is an undirected graph. Furthermore, by Lemma \ref{lem: size Rk}, $\Gamma$ is $k$-regular with degree $k = \frac{q-1}{d_\kappa}$ and possesses $|V| = q^m$ vertices. 
	
We first compute the $|E| - |V|$. By the handshaking lemma, the number of undirected edges is $|E| = \frac{|V| k}{2} = \frac{q^m (q-1)}{2 d_\kappa}$. Therefore, we obtain
	$$
	|E|-|V|=\tfrac{q^m (q-1)}{2 d_\kappa} - q^m = \tfrac{q^m(D-2)}{2}= \rho_{\G}.
	$$
	
Now, we evaluate the characteristic determinant in the Ihara-Bass formula \eqref{eq: Bass formula}. 
Since the adjacency matrix $A_\Gamma$ is diagonalizable, the determinant of any polynomial evaluated at $A_\Gamma$ is simply the product of that polynomial evaluated at the eigenvalues of $A_\Gamma$. By Theorem 2.3, the eigenvalues of $A_\Gamma$ are parametrized by $\alpha \in \mathbb{F}_q^m$ and are explicitly given by $\lambda_{\alpha, \kappa, q} = \frac{1}{d_\kappa} \big( \mathcal{W}_\chi(f_{\alpha, \kappa}) - 1 \big)$. Thus,
	\[
	\det \big( I - u A_\Gamma + u^2(k-1)I \big) = \prod_{\alpha \in \mathbb{F}_q^m}\big( 1 - u \lambda_{\alpha, \kappa, q} + u^2 (\tfrac{q-1}{d_\kappa} - 1) \big).
	\]
Substituting $\rho_\Gamma$ and the expanded determinant back into \eqref{eq: Bass formula} yields the desired explicit expression for $\zeta_\Gamma(u)$.
\end{proof}

\section{The number of solutions of monic SDEs in terms of  $\G(\kappa,q)$} \label{sec: SDE and walks}
In this section, we consider systems of monic diagonal equations of the form  
	\begin{equation} \label{eq: monic SDE}
		\mathcal{S}_{m \times s, q} (\kappa, \beta) := \quad \begin{cases}
			\begin{tabular}{ccccccc}	
				$X_1^{k_1}$ & $+$ & $\cdots$ & $+$ & $X_s^{k_1}$  & $=$ & $\beta_1$, \\[1mm]
				$X_1^{k_2}$ & $+$ & $\cdots$ & $+$ & $X_s^{k_2}$  & $=$ & $\beta_2$, \\[1mm]
				$\vdots$    &     & $\vdots$ & 	   & $\vdots$  	  &  	& $\vdots$ \\[1mm]
				$X_1^{k_m}$ & $+$ & $\cdots$ & $+$ & $X_s^{k_m}$  & $=$ & $\beta_m$,   
			\end{tabular}
		\end{cases}
	\end{equation}
where $\kappa = (k_1,\ldots,k_m) \in \N^m$ and $\beta = (\beta_1,\ldots,\beta_m) \in (\ff_q)^m$.
We will give two expressions for the number of solutions of these systems over finite fields: one in terms of walks of associated diagonal GP-graph 
and the second one in terms of the adjacency matrix of the associated diagonal GP-graph.

We denote by $N_{m \times s, q}(\kappa, \beta)$ the number of solutions of this system (see \eqref{eq: No sols SDE}), that is 
\begin{equation} \label{eq: Nsmq monico}
	N_{m \times s, q}(\kappa, \beta) = \# \big\{(x_1,\ldots,x_s) \in (\ff_q)^s \: : \: x_1^{k_i}+\cdots+ x_s^{k_i} = \beta_i,  \:  1\le i \le m \big\},	
\end{equation}	
and by $N_{m \times s, q}^*(\kappa, \beta)$ the number of solutions of \eqref{eq: monic SDE} with non-zero coordinates, that is 
\begin{equation} \label{eq: Nsmq monico no0}
	N_{m \times s, q}^*(\kappa, \beta) = \# \big\{(x_1,\ldots,x_s) \in (\ff_q^*)^s \: : \: x_1^{k_i}+\cdots+ x_s^{k_i} = \beta_i,  \:  1\le i \le m \big\}.
\end{equation}

The case when the system in \eqref{eq: monic SDE} is homogeneous, i.e.\@ $\beta = (0,\ldots,0)$, will be studied in more detail in the next section, where we will give the number of solutions of $\mathcal{S}_{m \times s, q} (\kappa)$ by a formula depending on the spectrum of the graph $\G(\kappa,q)$.

\subsection{Walks in $\G(\kappa,q)$}
Given a graph $G$ and vertices $v,w$ of $G$, it is usual to denote by 
$w_{G}(s;v,w)$ the number of walks of length $s$ (or \textit{$s$-walks}) in $G$ from vertex $v$ to vertex $w$, 
that is 
\begin{equation} \label{eq: definition number of walks v-->w}
	w_{G}(s;v,w) = \#\{ v v_2 \cdots v_{s-1} w : v_2,\ldots,v_{s-1} \in V(G) \}.
\end{equation}
In particular, $w_G(s;v,v)$ is the number of closed $s$-walks from $v$ to $v$. 
By convention,
	$$ w_{G}(0;v_i,v_j)=0 \quad \text{if $v_i\neq v_j$}  \qquad \text{and} \qquad w_{G}(0;v_i,v_i)=1. $$ 
Hence, the total number $w_G(s)$ of $s$-walks in $G$ is given by 
\begin{equation} \label{eq: wG(s)}
	w_G(s) = \sum_{v,w \in V} w_G(s;v,w).
\end{equation}

Next, given $v,w \in (\ff_q)^m$, we relate the number $N^*_{m \times s, q}(\kappa, w-v)$ of solutions over $\ff_{q}$ with non-zero coordinates of the monic system of diagonal equations  $\mathcal{S}_{m \times s, q}(\kappa, b)$ in \eqref{eq: monic SDE} 
with $b=w-v$
and the number $w_{\G(\kappa,q)}(s;v,w)$ of $s$-walks from $v$ to $w$ in the graph $\G(\kappa,q)$.
	
\begin{lem} \label{lema: walks}
Let $s,m \in \N$ and $\kappa \in \N^{m}$.
If $v,w \in (\ff_q)^m$, then 
	\begin{equation} \label{eq: walk sol} 
		w_{\G(\kappa,q)}(s;v,w) = \tfrac{1}{(d_\kappa)^s} \, N_{m \times s, q}^*(\kappa, w-v).
	\end{equation}
\end{lem}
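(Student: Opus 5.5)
The plan is to build an explicit $d_\kappa^s$-to-one surjection from the solution set counted by $N^*_{m\times s,q}(\kappa,w-v)$ onto the set of $s$-walks from $v$ to $w$ in $\G=\G(\kappa,q)$. By the definition \eqref{eq: diag GPs}, an arc goes from $u$ to $u'$ exactly when $u'-u\in R_\kappa$. Hence an $s$-walk $v=v_0\to v_1\to\cdots\to v_s=w$ is the same datum as an $s$-tuple of steps $(r_1,\ldots,r_s)\in R_\kappa^s$ with $r_j=v_j-v_{j-1}$ and $r_1+\cdots+r_s=w-v$. The correspondence is bijective, since the walk is recovered from $v$ by partial sums. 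Therefore
\[
w_\G(s;v,w)=\#\{(r_1,\ldots,r_s)\in R_\kappa^s : r_1+\cdots+r_s=w-v\}.
\]

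Next I consider the map $\phi:(\ff_q^*)\to R_\kappa$, $x\mapsto (x^{k_1},\ldots,x^{k_m})$, which is surjective by definition. In the proof of Theorem \ref{teo: Spec G(K,q)} it was shown that $\phi(x)=\phi(y)$ if and only if $x/y\in U_\kappa$. So every fibre of $\phi$ is a coset of $U_\kappa$ and, by Lemma \ref{lem: size Rk}, has exactly $d_\kappa$ elements.

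Now take the product map $\Phi=\phi^{\times s}:(\ff_q^*)^s\to R_\kappa^s$. Each fibre of $\Phi$ has exactly $d_\kappa^s$ elements. A tuple $x=(x_1,\ldots,x_s)\in(\ff_q^*)^s$ solves $\mathcal S_{m\times s,q}(\kappa,w-v)$ if and only if $\sum_j \Phi(x)_j=w-v$, reading the $i$-th coordinate as $\sum_j x_j^{k_i}=w_i-v_i$. The solution set counted by $N^*_{m\times s,q}(\kappa,w-v)$ is therefore exactly $\Phi^{-1}$ of the set of step-tuples displayed above. Counting fibres gives $N^*_{m\times s,q}(\kappa,w-v)=d_\kappa^s\, w_\G(s;v,w)$, which is \eqref{eq: walk sol}.

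There is no real obstacle here. The only points needing care are keeping the arc orientation consistent with \eqref{eq: diag GPs}, so that the walk from $v$ to $w$ corresponds to right-hand side $w-v$ and not $v-w$, and the degenerate case $s=0$, which can be excluded since $s\in\N$.
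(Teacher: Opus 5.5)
Your argument is correct and is essentially the paper's proof: both identify an $s$-walk from $v$ to $w$ with an $s$-tuple of steps in $R_\kappa$ summing to $w-v$. Both also get the factor $d_\kappa^s$ from the fact that $x\mapsto(x^{k_1},\ldots,x^{k_m})$ is exactly $d_\kappa$-to-one on $\ff_q^*$ (Lemma~\ref{lem: size Rk}). Your fibre count for the product map $\Phi$ is a cleaner packaging of the paper's two-way counting. It also keeps the arc orientation consistent with \eqref{eq: diag GPs}, whereas the paper's first line writes the edge condition as $v_i-w_i=x^{k_i}$.
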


\begin{proof}
Let $\kappa=(k_1, \ldots,k_m) \in \N^{m}$. If $v,w\in (\ff_q)^m$ form an edge in the graph $\G(\kappa,q)$ then there is an $x \in \ff_q^*$ such that $v_i-w_i=x^{k_i}$ for all $i=1,\ldots,m$. 
Iterating this process, an $s$-walk from $v$ to $w$ in $\G(\kappa,q)$ gives $x_{1},\ldots,x_{s}\in \ff_{q}^*$ such that
\begin{equation} \label{eq: sums power}
	v_i+x_1^{k_i}+\cdots+x_s^{k_i}=w_i \qquad (1 \le i \le m).
\end{equation} 
Notice that, given $x\in \ff_{q}^*$, by Lemma \ref{lem: size Rk} there are exactly $d_\kappa$ elements $y\in \ff_{q}^*$ such that 
\begin{equation} \label{eq: vec x^k = vec y^k}
	(x^{k_1},\ldots, x^{k_m}) = (y^{k_1},\ldots,y^{k_m}).
\end{equation}
So, each walk induces a number $(d_\kappa)^s$ of solutions satisfying \eqref{eq: sums power}. 
		
Reciprocally, any solution $(x_1,\ldots,x_s) \in (\ff_{q}^*)^s$ of \eqref{eq: sums power} defines an $s$-walk from $v$ to $w$ in $\G(\kappa,q)$, by taking into account that there are $d_\kappa$ elements $y\in \ff_{q}^*$ such that 
\eqref{eq: vec x^k = vec y^k} holds 
for each $x\in \ff_{q}^*$. 
Thus, there are $(d_\kappa)^s$ different solutions of \eqref{eq: sums power} which induce the same $s$-walk. 
Therefore, we get \eqref{eq: walk sol} 
as desired.
\end{proof}

In particular, if $w=v$, we have that 
\begin{equation}
	N_{m \times s, q}^*(\kappa) = (d_\kappa)^s \, w_{\G(\kappa,q)}(s;v,v).
\end{equation} 	
That is, closed walks are related with the number of solutions of monic homogeneous systems of diagonal equations.

\begin{rem}
From Lemma \ref{lema: walks} and \eqref{eq: wG(s)}, by adding over the vertices one can obtain the total number of $s$-walks $w_\G(s)$ and of closed (resp.\@ open) walks $w_\G^{cl}(s)$ (resp.\@ $w_\G^{op}(s)$) of length $s$ in $\G=\G(\kappa,q)$:  
	$$ 
		w_\G(s) = \tfrac{1}{(d_\kappa)^s} \Big\{ \sum_{v \in (\ff_q)^m} N_{m \times s,q}^*(\kappa) +  \tfrac 12 \sum_{\substack{v, w \in (\ff_q)^m \\ v\ne w}} N_{m \times s,q}^*(\kappa, w-v) \Big\}. 
	$$
That is, $w_\G(s) = w_\G^{cl}(s) + w_\G^{op}(s)$, where 
\begin{align*}
	& w_\G^{cl}(s) = \tfrac{1}{(d_\kappa)^s} \sum_{v \in (\ff_q)^m} N_{m \times s,q}^*(\kappa), \\ 
	& w_\G^{op}(s) = \tfrac{1}{2(d_\kappa)^s} \sum_{\substack{v, w \in (\ff_q)^m \\ v\ne w}} N_{m \times s,q}^*(\kappa, w-v) .
\end{align*}
On the other hand, by adding over $s$, one can obtain the total number of walks $w_\G$, open walks $w_\G^{op}$ and closed walks $w_\G^{cl}$ of $\G=\G(\kappa, q)$.
\end{rem}

\subsubsection*{Recursive formulas for $N_{m\times s,q}(\kappa,\beta)$}
Here, we show that it is possible to recursively obtain the number $N_{m \times s, q}(\kappa, w-v)$ of solutions  
in $(\ff_q)^s$ of the system 
	$$ \mathcal{S}_{m \times s, q} (\kappa, w-v)$$ 
in \eqref{eq: monic SDE}, with $v,w \in (\ff_q)^m$,
in terms of the numbers $N_{m \times \ell, q}^*(\kappa, w-v)$ of solutions in $(\ff_q^*)^\ell$ with non-zero coordinates of smaller systems $\mathcal{S}_{m \times \ell, q} (\kappa, w-v)$ with $1\le \ell \le s$.

Thus, in the previous notations, we have that the following.

\goodbreak 
\begin{prop} \label{prop: recursion of Nmxsq}
Let $m,s \in \N$, $\kappa \in \N^m$ and $v,w\in (\ff_q)^m$. The following holds:
\begin{enumerate}[$(a)$]
	\item If $v\ne w$, then 
	\begin{equation} \label{eq: recursive fla v,w}
		 N_{m \times s,q}(\kappa, w-v) = \sum\limits_{\ell=1}^{s} \tbinom{s}{\ell} N_{m \times \ell,q}^*(\kappa, w-v).
	\end{equation}

	\item If $v = w$, then we have that
	\begin{equation} \label{eq: recursive fla v,v}
		N_{m \times s,q}(\kappa, 0) = 1+\sum\limits_{\ell=1}^{s} \tbinom{s}{\ell} N_{m \times \ell,q}^*(\kappa, 0).
	\end{equation}
\end{enumerate}
\end{prop}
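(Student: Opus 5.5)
The proof will partition the solution set according to the support of the solution, that is, the set of coordinates that are non-zero. Let $\beta = w-v$. For $x=(x_1,\ldots,x_s)\in(\ff_q)^s$ put $\operatorname{supp}(x)=\{j : x_j\neq 0\}$. Since every $k_i\ge 1$, a zero coordinate contributes $0^{k_i}=0$ to each equation of $\mathcal{S}_{m\times s,q}(\kappa,\beta)$. Hence $x$ is a solution if and only if the vector $(x_j)_{j\in \operatorname{supp}(x)}$, which has only non-zero entries, is a solution of the smaller system $\mathcal{S}_{m\times \ell,q}(\kappa,\beta)$ with $\ell=|\operatorname{supp}(x)|$.

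The key step is this bijection. Fix a subset $T\subseteq\{1,\ldots,s\}$ with $|T|=\ell\ge 1$. Restricting to the coordinates in $T$ gives a bijection between the solutions in $(\ff_q)^s$ with support exactly $T$ and the solutions in $(\ff_q^*)^\ell$ of $\mathcal{S}_{m\times\ell,q}(\kappa,\beta)$. The inverse map extends a non-zero solution by zeros outside $T$. The count on the right is $N^*_{m\times\ell,q}(\kappa,\beta)$, and it does not depend on $T$ because the system is symmetric in the variables. Summing over the $\binom{s}{\ell}$ subsets of size $\ell$ gives $\binom{s}{\ell}N^*_{m\times\ell,q}(\kappa,\beta)$ solutions with support of size $\ell$.

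It remains to handle the empty support, i.e.\@ the zero vector, which is where the two cases differ. The zero vector solves the system exactly when $\beta=0$, i.e.\@ when $v=w$.
\begin{enumerate}[$(a)$]
\item If $v\neq w$, the zero vector is not a solution, and summing over $\ell=1,\ldots,s$ gives \eqref{eq: recursive fla v,w}.
\item If $v=w$, the zero vector contributes the extra $1$, giving \eqref{eq: recursive fla v,v}.
\end{enumerate}
Equivalently, one may set $N^*_{m\times 0,q}(\kappa,\beta)=\delta_{\beta,0}$ and sum from $\ell=0$.

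There is no real obstacle here. The only point needing care is the requirement $k_i\ge1$, so that $0^{k_i}=0$, which holds since $\kappa\in\N^m$.
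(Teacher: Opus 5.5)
Your proposal is correct and follows essentially the same argument as the paper. Both partition the solutions by their set of non-zero coordinates, identify each class with the non-zero solutions of the smaller monic system (which yields the factor $\binom{s}{\ell}$), and count the zero vector exactly when $w-v=0$; your support-based bijection is just a slightly tidier formalization of the paper's index sequences $r_1<\cdots<r_\ell$.
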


\begin{proof}
($a$) Suppose first that $v\ne w$, let $\kappa=(k_1, \ldots,k_m) \in \N^{m}$, and consider a solution $(x_1,\ldots,x_s)$ of the system of equations $X_1^{k_i}+\cdots+X_s^{k_i}=w_i-v_i$ with $i=1,\ldots,m$. 
We can assume that there are $\ell$ non-zero coordinates of $(x_1,\ldots,x_s)$ for any $\ell=1,\ldots, s$. 
In this case, for any $i=1,\ldots,\ell$, there is a sequence $1\le r_1 < r_2 < \cdots < r_{\ell} \le s$ such that $x_{r_i} \ne 0$ and $x_j=0$ for any $j \ne r_{i}$.
Hence, we have that
	$$
		x_{r_1}^{k_i}+\cdots +x_{r_{\ell}}^{k_i}=x_1^{k_i}+\cdots+x_s^{k_i}=w_i-v_i \qquad (1\le i \le m).
	$$

On the other hand, given $\ell=1,\ldots,s$, any solution $(y_1,\ldots,y_{\ell})\in (\ff_{q}^*)^{\ell}$ of the system of diagonal equations $X_1^{k_i}+\cdots+X_{\ell}^{k_i}=w_i-v_i$ with $i=1,\ldots,m$, induces a solution $(x_1,\ldots,x_s) \in \ff_q^{s}$ of the system of equations $X_1^{k_i} + \cdots + X_s^{k_i} = w_i-v_i$ with $i=1,\ldots,m$ by choosing a sequence $1\le n_1 < \cdots < n_{\ell} \le s$ such that 
	$ x_{n_t}=y_t $ 
for any $t=1\ldots,\ell$, and $x_j=0$ for the remaining $j \neq n_t$. 
Since this can be achieved in $\binom{s}{\ell}$ ways, we obtain \eqref{eq: recursive fla v,w}.

\noindent ($b$) 
Now, assume that $v=w$. In this case, the formula \eqref{eq: recursive fla v,v} is obtained in the same way as \eqref{eq: recursive fla v,w}, by taking into account that we also have the trivial solution $x_i=0$ for each $i=1,\ldots,s$.
\end{proof}

Combining the previous results, we obtain the number of solutions of the SDEs in terms of walks of diagonal GP-graphs.
\begin{thm} \label{thm: Nmsq from walks}
Let $m,s \in \N$, $\kappa \in \N^m$ and $\beta\in (\ff_q)^m$. Then, the number of solutions of the monic system $\mathcal{S}_{m \times s, q} (\kappa, \beta)$ as in \eqref{eq: monic SDE} is given by 	
\begin{equation}
N_{m \times s,q}(\kappa, \beta)=
		\delta_{\beta,0} + \sum\limits_{\ell=1}^{s} \tbinom{s}{\ell} \, d_{\kappa}^{\ell} \, w_{\G}(\ell;0,\beta),  
\end{equation}
with the usual notation $\delta_{\beta,0}=1$ if $\beta=0$ and $\delta_{\beta,0}=0$ if $\beta \ne 0$. 
\end{thm}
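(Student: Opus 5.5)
The plan is to combine Proposition \ref{prop: recursion of Nmxsq} with Lemma \ref{lema: walks}, taking $v=0$ and $w=\beta$ so that $w-v=\beta$. We split into two cases according to whether $\beta$ is zero, and the two cases are then unified by the Kronecker delta.

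If $\beta\neq 0$, part ($a$) of Proposition \ref{prop: recursion of Nmxsq}, applied with $v=0\neq w=\beta$, gives
$$N_{m\times s,q}(\kappa,\beta)=\sum_{\ell=1}^s\tbinom{s}{\ell}N^*_{m\times \ell,q}(\kappa,\beta).$$
If $\beta=0$, part ($b$) with $v=w=0$ gives the same sum plus $1$. This $1$ accounts for the trivial all-zero solution, which is the only solution with no nonzero coordinates. In both cases, therefore,
$$N_{m\times s,q}(\kappa,\beta)=\delta_{\beta,0}+\sum_{\ell=1}^s\tbinom{s}{\ell}N^*_{m\times \ell,q}(\kappa,\beta).$$
Note that for $\beta\ne0$ the all-zero vector is not a solution, which is consistent with the absence of the extra term.

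Next, for each $1\le\ell\le s$, Lemma \ref{lema: walks} with $v=0$, $w=\beta$ and walk length $\ell$ gives
$$N^*_{m\times \ell,q}(\kappa,\beta)=d_\kappa^{\ell}\,w_{\G}(\ell;0,\beta),\qquad \G=\G(\kappa,q).$$
Substituting this term by term into the previous display yields the stated formula.

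The only point requiring care is the orientation of the arcs. Lemma \ref{lema: walks} counts walks from $v$ to $w$ with steps $v\mapsto v+(x^{k_1},\ldots,x^{k_m})$, and these correspond to solutions of the system with right-hand side $w-v$. Taking $v=0$ and $w=\beta$ therefore matches the convention in the statement, $w_\G(\ell;0,\beta)$. No further obstacle is expected, since the argument is a direct substitution.
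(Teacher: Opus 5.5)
Your proposal is correct and follows the paper's own argument: the paper proves this theorem in one line as a direct consequence of Lemma~\ref{lema: walks} and Proposition~\ref{prop: recursion of Nmxsq}. You have simply written out that substitution with $v=0$, $w=\beta$, including the $\delta_{\beta,0}$ term for the trivial solution and the arc orientation, which matches the graph's definition.
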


\begin{proof}
This is straightforward from Lemma \ref{lema: walks} and Proposition \ref{prop: recursion of Nmxsq}.
\end{proof}

\begin{rem} \label{rem: binomial inversion fla}
Of course, by applying to Theorem \ref{thm: Nmsq from walks} the binomial inversion formula 
	$$ b_n = \sum_{k=0}^n \tbinom nk a_k \qquad \Rightarrow \qquad
		a_n = \sum_{k=0}^n (-1)^{n-k} \tbinom nk b_k,$$
one can obtain the number of walks in terms of the number of solutions of SDEs.
\end{rem}

\subsection{Adjacency matrices}	
Now, we put together the previous results in the section to obtain the number of solutions of the general monic (non-homogeneous) system of diagonal equations in terms of the adjacency matrix of the associated diagonal GP-graph. 

As a matter of notation, the adjacency matrix $A_{\Gamma}$ of $\Gamma=\G(\kappa,q)$ is in $M_{q^m}(\ff_q)$ since the vertex set of the graph is $V = (\ff_q)^m$. In this case, the elements of the matrix are indexed by vectors $v \in (\ff_q)^m$. If we order the elements of $V$ as $\{v_1, v_2,\ldots,v_{q^m} \}$, then $A_{v_i,v_j}$ denotes the element in the $i$-th row and $j$-th column indexed by $v_i$ and $v_j$, respectively.
Also, we will denote by $I_n$ the $n \times n$ identity matrix.

\begin{thm} \label{teo: Nmsq no-homogeneo con matrices}
Let $m,s \in \N$, $\kappa$ as in \eqref{eq: vector kappa}, and $\beta\in (\ff_q)^m$. Then, the number of solutions of the monic system $\mathcal{S}_{m \times s, q} (\kappa, \beta)$ as in \eqref{eq: monic SDE} is given by 
\begin{equation} \label{eq: nonhomog}
	N_{m \times s,q}(\kappa, \beta) = \big( (I_{q^m}+d_{\kappa}A_{\G})^{s}\big)_{0,\beta},
\end{equation}
where $d_\kappa$ is as in \eqref{eq: k hat} and $A_{\Gamma}$ is the adjacency matrix of $\Gamma=\G(\kappa,q)$.
\end{thm}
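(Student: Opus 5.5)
The plan is to derive \eqref{eq: nonhomog} directly from Theorem~\ref{thm: Nmsq from walks} by expanding the matrix power binomially. Since $I_{q^m}$ commutes with $A_\G$, we have
$$ (I_{q^m}+d_\kappa A_\G)^s = \sum_{\ell=0}^{s} \tbinom{s}{\ell} d_\kappa^{\ell} A_\G^{\ell}, $$
with the convention $A_\G^0 = I_{q^m}$. Taking the $(0,\beta)$ entry, which is linear in the matrix, gives
$$ \big((I_{q^m}+d_\kappa A_\G)^s\big)_{0,\beta} = (I_{q^m})_{0,\beta} + \sum_{\ell=1}^{s} \tbinom{s}{\ell} d_\kappa^{\ell} (A_\G^{\ell})_{0,\beta}. $$

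Next I would invoke the standard fact that the $(v,w)$ entry of the $\ell$-th power of an adjacency matrix counts the $\ell$-walks from $v$ to $w$. That is, $(A_\G^\ell)_{v,w} = w_\G(\ell;v,w)$, which follows by induction on $\ell$ from $(A^{\ell})_{v,w} = \sum_{u} (A^{\ell-1})_{v,u} A_{u,w}$. This holds for directed graphs as well, provided the orientation convention for $A_\G$ matches the one used for arcs in Lemma~\ref{lema: walks}. Concretely, $(A_\G)_{u,v}=1$ exactly when there is an arc from $u$ to $v$, i.e.\ when $v-u \in R_\kappa$. Moreover, $(I_{q^m})_{0,\beta} = \delta_{\beta,0}$, which agrees with the convention $w_\G(0;v,w)=\delta_{v,w}$.

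Substituting these identities turns the right-hand side into
$$ \delta_{\beta,0} + \sum_{\ell=1}^{s} \tbinom{s}{\ell} d_\kappa^{\ell}\, w_\G(\ell;0,\beta), $$
which equals $N_{m\times s,q}(\kappa,\beta)$ by Theorem~\ref{thm: Nmsq from walks}.

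The only delicate point is the orientation bookkeeping. The paper's definition of $\G(\kappa,q)$ as $\Cay(\ff_q^m,-R_\kappa)$ interacts with conventions about which direction is an arc, and the proof of Lemma~\ref{lema: walks} writes the relation as $v_i - w_i = x^{k_i}$ in one place. So I would check carefully that walks from $0$ to $\beta$ correspond to solutions with $\sum_j x_j^{k_i} = \beta_i$, rather than $-\beta_i$. If the orientation turned out reversed, the identity would instead read with the entry $(\beta,0)$, or equivalently with $A_\G^{T}$. Fixing the convention that an arc $u\to v$ means $v-u\in R_\kappa$, as stated right after \eqref{eq: diag GPs}, makes everything consistent.
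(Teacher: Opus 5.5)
Your proposal is correct and follows the paper's proof: both combine Theorem~\ref{thm: Nmsq from walks} with the identity $(A_\G^\ell)_{0,\beta}=w_\G(\ell;0,\beta)$, and both expand $(I_{q^m}+d_\kappa A_\G)^s$ binomially (using that $I_{q^m}$ and $A_\G$ commute) before taking the $(0,\beta)$ entry. Your orientation check is consistent with the convention stated after \eqref{eq: diag GPs}, namely that there is an arc $u\to v$ iff $v-u\in R_\kappa$; this is exactly what makes walks from $0$ to $\beta$ correspond to solutions with right-hand side $\beta$.
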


\begin{proof}
By taking into account that $w_{\G}(\ell;0,\beta)$ is the $(0,\beta)$-coordinate of the matrix $A_{\G}^{\ell}$, by Theorem \ref{thm: Nmsq from walks} we have that
	$$
		N_{m \times s,q}(\kappa, \beta) = \delta_{\beta,0} +
		\sum\limits_{\ell=1}^{s} \tbinom{s}{\ell} \, d_{\kappa}^{\ell} \, \big(A_{\G}^{\ell}\big)_{0,\beta}. 
	$$
Hence, by the addition and constant multiplication of matrices we have that 
	$$
		N_{m \times s, q}(\kappa, \beta) = \delta_{\beta,0} +
		\Big(\sum\limits_{\ell=1}^{s} \tbinom{s}{\ell} \, d_{\kappa}^{\ell} \, A_{\G}^{\ell}\Big)_{0,\beta} 
	$$
On the other hand, since $I=I_{q^m}$ and $A_{\G}$ commutes, we have that 
	$$
	(I+d_{\kappa}A_{\G})^{s} = \sum_{\ell=0}^{s}\tbinom{s}{\ell}(d_{\kappa}A_{\G})^{\ell} = I+\sum_{\ell=1}^{s}\tbinom{s}{\ell}d_{\kappa}^{\ell} A_{\G}^{\ell}.
	$$
Thus, by taking the $(0,\beta)$-coordinate in the above equality we obtain
	$$
	\big((I+d_{\kappa}A_{\G})^{s}\big)_{0,\beta} = \delta_{\beta,0} +
		\Big(\sum\limits_{\ell=1}^{s} \tbinom{s}{\ell} \, d_{\kappa}^{\ell} \, A_{\G}^{\ell}\Big)_{0,\beta} 
	$$
Therefore, we obtain \eqref{eq: nonhomog}, as asserted.
\end{proof}

\begin{rem}\label{rem: diagonalization}
Recall that the adjacency matrix of a Cayley graph is a normal matrix that commutes with its conjugate transpose. Thus, $A_\G$ is diagonalizable and hence there exists an invertible matrix $R_{\G}$ such that 
	$$ A_{\G} = R_{\G}^{-1}D_{\G}R_{\G}  \qquad \text{ with } \qquad D_{\G} = diag(\lambda_1, \ldots, \lambda_{q^{m}})$$
where $D_{\G}$ is diagonal and the $\lambda_i$'s are the eigenvalues of $A_\G$. In this case, we have
\begin{align*}
	(I_{q^m}+d_{\kappa}A_{\G})^{s}  
	& = (R_{\G}^{-1} (I_{q^m}+d_{\kappa}D_{\G}) R_{\G})^{s} = R_{\G}^{-1} (I_{q^m}+d_{\kappa}D_{\G})^{s} R_{\G} \\ 
    & = R_{\G}^{-1} \, diag\big( (1+d_{\kappa} \lambda_1)^{s},\ldots,(1+d_{\kappa} \lambda_{q^{m}})^{s} \big) \, R_{\G}.
\end{align*} 	
Therefore, we can obtain $N_{m\times s,q}(\kappa,\beta)$'s from the above matrix multiplications. 
\end{rem}

\subsubsection*{Connection with the Ihara zeta function}
We now show that the logarithmic expansion $\log \zeta_\Gamma(u)$ of the Ihara zeta function of $\Gamma=\G(\kappa,q)$ is completely determined by the number of  solutions $N_{m \times \ell, q}(\kappa)$ to the homogeneous diagonal systems.

\begin{prop} \label{prop: Ihara and solutions}
Let $m, s \in \mathbb{N}$ and $\kappa \in \mathbb{N}^m$.  Let $q$ be a prime power such that either $q$ is even, or $q$ is odd with $D=\frac{q-1}{d_{\kappa}}$ and $\frac{k_i}{d_\kappa}$ odd for all $i=1,\dots,m$. Let $\Gamma = \Gamma(\kappa, q)$ be the associated undirected diagonal GP-graph and let $\rho_\Gamma
=\frac{q^m}{2}(D-2)$.
Then, the following identity holds in $\mathbb{Q}[[u]]$:
$$
\log\zeta_\Gamma(u) = \rho_\Gamma\sum_{k=1}^{\infty}\tfrac{u^{2k}}{k}+q^m\sum_{k=1}^{\infty}\tfrac{1}{k}\sum_{j=0}^{k}
	\tbinom{k}{j}\tfrac{(D-1)^{k-j}u^{2k-j}}{d_\kappa^{\,j}}
	\sum_{\ell=0}^{j}
	(-1)^{k-\ell}\tbinom{j}{\ell}
	N_{m\times\ell,q}(\kappa),
$$
where $N_{m\times0,q}(\kappa)=1$.
In particular, the Ihara zeta function of $\Gamma$ is
completely determined by the numbers
$N_{m\times\ell,q}(\kappa)$, $\ell\geq0$.
\end{prop}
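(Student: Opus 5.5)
We start from the closed form of Proposition~\ref{prop: Ihara Zeta GP graph}, rewritten with the Bass formula as
$$
\log\zeta_\Gamma(u) = -\rho_\Gamma\log(1-u^2) - \sum_{\alpha\in\ff_q^m}\log\big(1-u\lambda_\alpha+(D-1)u^2\big),
$$
where $\lambda_\alpha=\lambda_{\alpha,\kappa,q}$. The first term gives $\rho_\Gamma\sum_{k\ge1}u^{2k}/k$ directly, since $-\log(1-u^2)=\sum_k u^{2k}/k$.

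For the second term we put $z=u\lambda_\alpha-(D-1)u^2$, which has zero constant term, and use
$$-\log(1-z)=\sum_{k\ge1}z^k/k$$
in $\mathbb{C}[[u]]$. Expanding binomially,
$$
z^k=\sum_{j=0}^k\tbinom kj \lambda_\alpha^j u^j(-(D-1)u^2)^{k-j}=\sum_{j=0}^k\tbinom kj(-1)^{k-j}(D-1)^{k-j}\lambda_\alpha^j u^{2k-j}.
$$
Summing over $\alpha$ leaves the power sums $p_j=\sum_\alpha\lambda_\alpha^j=\operatorname{tr}(A_\Gamma^j)$.

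The key step is to express $p_j$ through the numbers $N_{m\times\ell,q}(\kappa)$. We write $\lambda^j=d_\kappa^{-j}\big((1+d_\kappa\lambda)-1\big)^j$, so that
$$
p_j=d_\kappa^{-j}\sum_{\ell=0}^j\tbinom j\ell(-1)^{j-\ell}\sum_\alpha(1+d_\kappa\lambda_\alpha)^\ell .
$$
By Theorem~\ref{teo: Nmsq no-homogeneo con matrices}, $(I+d_\kappa A_\Gamma)^\ell$ has diagonal entries $N_{m\times\ell,q}(\kappa)$. By vertex-transitivity of the Cayley graph (translation invariance), all $q^m$ diagonal entries are equal, so
$$
\sum_\alpha(1+d_\kappa\lambda_\alpha)^\ell=\operatorname{tr}(I+d_\kappa A_\Gamma)^\ell=q^m N_{m\times\ell,q}(\kappa),
$$
with the $\ell=0$ case equal to $q^m$, consistent with the convention $N_{m\times0,q}(\kappa)=1$. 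Alternatively one can use Remark~\ref{rem: diagonalization}, or apply binomial inversion to Theorem~\ref{thm: Nmsq from walks}, since $w_\Gamma(\ell;0,0)$ counts closed walks.

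Substituting gives
$$
\log\zeta_\Gamma(u)=\rho_\Gamma\sum_k\tfrac{u^{2k}}k+q^m\sum_k\tfrac1k\sum_{j=0}^k\tbinom kj\tfrac{(D-1)^{k-j}u^{2k-j}}{d_\kappa^j}\sum_{\ell=0}^j(-1)^{k-j}(-1)^{j-\ell}\tbinom j\ell N_{m\times\ell,q}(\kappa).
$$
The signs combine as $(-1)^{k-j}(-1)^{j-\ell}=(-1)^{k-\ell}$, which is the stated formula.

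The main obstacle is bookkeeping rather than anything conceptual: collecting these sign factors into $(-1)^{k-\ell}$, and justifying the formal interchange of the finite $\alpha$-sum with the power series. The latter is harmless since each coefficient of $u^n$ involves only finitely many terms. Rationality of all coefficients follows because the $N$'s are integers, which places the identity in $\mathbb{Q}[[u]]$. The final claim, that $\zeta_\Gamma$ is determined by the $N_{m\times\ell,q}(\kappa)$, follows by exponentiating.
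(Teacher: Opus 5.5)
Your proof is correct and follows essentially the same route as the paper: the Bass formula, the logarithmic expansion, the binomial theorem, and then rewriting $\operatorname{Tr}(A_\Gamma^j)$ in terms of the numbers $N_{m\times\ell,q}(\kappa)$ by binomial inversion. The only difference is where the inversion happens. You do it at the matrix level, writing $A_\Gamma^j=d_\kappa^{-j}\big((I+d_\kappa A_\Gamma)-I\big)^j$ and invoking Theorem~\ref{teo: Nmsq no-homogeneo con matrices} (equivalently Theorem~\ref{teo: M with Spec}). The paper instead goes through $N^*_{m\times j,q}(\kappa)$ via Lemma~\ref{lema: walks} and inverts Proposition~\ref{prop: recursion of Nmxsq}, which amounts to the same computation.
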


\begin{proof}
Let $A$ be the adjacency matrix of $\G$. By Proposition \ref{prop: Ihara Zeta GP graph} we have that 
$$
\zeta_\Gamma(u)^{-1}
=(1-u^2)^{\rho_\Gamma}
\det\bigl(I-uA+(D-1)u^2I\bigr).
$$
Taking formal logarithms, we obtain
\[
\log\zeta_\Gamma(u)
=\rho_\Gamma\sum_{k=1}^{\infty}\frac{u^{2k}}{k}
+\sum_{k=1}^{\infty}\tfrac{1}{k}
\operatorname{Tr}\bigl((uA-(D-1)u^2I)^k\bigr).
\]
Since $A$ commutes with $I$, the binomial theorem gives
$$
\operatorname{Tr}\bigl((uA-(D-1)u^2I)^k\bigr)
=
\sum_{j=0}^{k}
\tbinom{k}{j}(-1)^{k-j}(D-1)^{k-j}
u^{2k-j}\operatorname{Tr}(A^j).
$$
For $j\geq0$. By vertex-transitivity and Lemma \ref{lema: walks} we obtain that
$$
\operatorname{Tr}(A^j)
=q^m w_\Gamma(j;0,0)
=\tfrac{q^m}{d_\kappa^{\,j}}
N^*_{m\times j,q}(\kappa).
$$
On the other hand, Proposition \ref{prop: recursion of Nmxsq} and the convention $N^*_{m\times0,q}(\kappa)=N_{m\times0,q}(\kappa)=1$ imply that
	$$ N_{m\times j,q}(\kappa) = \sum_{\ell=0}^{j} \tbinom{j}{\ell}N^*_{m\times\ell,q}(\kappa). $$
Thus binomial inversion yields
	$$ N^*_{m\times j,q}(\kappa) = \sum_{\ell=0}^{j} (-1)^{j-\ell} \tbinom{j}{\ell} N_{m\times\ell,q}(\kappa). $$
Substituting these identities into the logarithmic
expansion and using
$(-1)^{k-j}(-1)^{j-\ell}=(-1)^{k-\ell}$
proves the asserted formula.
All substitutions are valid formally, since
$2k-j\geq k$ for $0\leq j\leq k$, so each coefficient
receives contributions from only finitely many terms.
\end{proof}


\section{Monic SDEs and the spectrum of $\G(\kappa,q)$} \label{sec: Homogeneous SDEs and spectrum}
In the previous section, we showed that the number of solutions of monic SDEs over finite fields can be put in terms of structural data of associated diagonal GP-graphs.
In this section, we show that the number of solutions over finite fields of monic homogeneous SDEs can be expressed in terms of the spectrum of the associated diagonal GP-graphs. In the case that the system is homogeneous, we can say something better.

Recall that given a graph $\G$ with adjacency matrix $A$, the spectrum of $\G$ is the set of eigenvalues of $A$ counted with multiplicities. It is denoted by 
$$Spec(\G) = \{ [\lambda_1]^{\mu_1},  \ldots, [\lambda_N]^{\mu_N} \},$$ 
where the $\lambda_i$'s are the eigenvalues and the $\mu_i$'s the multiplicities.

\subsection{The general case}
The characters of $(\ff_q)^m$ are given by 
$$ \chi_{\beta}(x_1,\ldots, x_m)=\chi_{\beta_1}(x_1)\cdots \chi_{\beta_{m}}(x_m), $$
where $\beta=(\beta_1,\ldots,\beta_m)$ runs through all $(\ff_{q})^m$,
and $\chi_{\beta_{\ell}}$ are characters of $\ff_q$, that is 
$ \chi_{\beta_{\ell}}(x)=e^{\frac{2\pi i}{p}\Tr_{q/p}(\beta_{\ell} x)}$ 
for all $\ell=1,\ldots,m$.

\goodbreak 

As we mentioned in the introduction, given $\alpha=(\alpha_1,\ldots, \alpha_m)\in (\ff_q)^{m}$, the eigenvalues of $\G(\kappa,q)$ are given by (see \eqref{eq: eigenvalues GP characters} in the proof of Theorem \ref{teo: Spec G(K,q)})
\begin{equation} \label{eq: lambda a,k,q}
	\lambda_{\alpha,\kappa,q}  = \chi_\alpha(R_\kappa) = \sum_{(x_{1},\ldots, x_{m}) \in R_{\kappa}} \chi_{\alpha}(x_{1},\ldots,x_{m}).	
\end{equation}

Suppose we index the elements of $G=(\ff_{q})^{m}$ by $g_1, g_2, \ldots, g_{q^{m}}$ in some way, with $g_1=0$. 
Since diagonal GP-graphs are reversed Cayley graphs (see the Introduction) we have that
the eigenvector associated to $\lambda_{\alpha,\kappa,q}$ is 
\begin{equation} \label{eq: eigenvector}
	v_{\chi_{\alpha}}=(\overline{\chi_{\alpha}(g_1)},\ldots, \overline{\chi_{\alpha}(g_{q^{m}})})^t.
\end{equation}
Also, by orthogonality of the characters, the vector 
\begin{equation}\label{eq: eigenvectors orthonormales}
	w_{\chi_{\alpha}} = \tfrac{1}{q^{m/2}} \, v_{\chi_{\alpha}}
\end{equation}
has complex norm $1$. 	

We are now in a position to give a general formula for the number of solutions of monic systems of diagonal equations over finite fields in terms of the spectrum of diagonal GP-graphs $\G(\kappa,q)$ and of characters of $\ff_{q^m}$.	
Recall that for monic systems we assume that $\kappa = (k_1,\ldots,k_m)$ satisfy $k_1 < \cdots < k_m$. We denote this by saying that $\kappa \in \N_<^m$.

\begin{thm} \label{teo: Nmsq chars y spec}
	Let $m,s \in \N$, $\kappa \in \N_<^m$ and $\beta\in (\ff_q)^m$. Then, the number of solutions of the monic system $\mathcal{S}_{m \times s, q} (\kappa, \beta)$ as in \eqref{eq: monic SDE} is given by 
	\begin{equation} \label{eq: gral fla}
		N_{m \times s,q}(\kappa, \beta) = \tfrac{1}{q^m} \sum_{\alpha \in \ff_{q^m}} \overline{\chi_\beta(\alpha)} \, \big( 1+d_\kappa \lambda_{\alpha, \kappa,q} \big)^s
	\end{equation}
	where $\lambda_{\alpha, \kappa,q}$ are the eigenvalues of $\G(\kappa,q)$ as given in \eqref{eq: lambda a,k,q} and $d_\kappa$ is as in \eqref{eq: k hat}.
\end{thm}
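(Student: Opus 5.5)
We start from Theorem \ref{teo: Nmsq no-homogeneo con matrices}, which gives $N_{m \times s,q}(\kappa, \beta) = \big( (I_{q^m}+d_{\kappa}A_{\G})^{s}\big)_{0,\beta}$. The plan is to compute this matrix entry through the orthonormal eigenbasis of $A_\G$ built from characters, i.e.\ to make Remark \ref{rem: diagonalization} fully explicit. Here we identify $\ff_{q^m}$ with $(\ff_q)^m$ as additive groups, so the sum over $\alpha \in \ff_{q^m}$ is the sum over all characters $\chi_\alpha$ of $(\ff_q)^m$.

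\textbf{Eigenbasis.} By \eqref{eq: eigenvector} and \eqref{eq: eigenvectors orthonormales}, the vectors $w_{\chi_\alpha} = q^{-m/2} v_{\chi_\alpha}$, $\alpha \in (\ff_q)^m$, form an orthonormal basis of $\mathbb{C}^{q^m}$. This follows from the orthogonality relations for the characters of the abelian group $(\ff_q)^m$. I would verify directly that $A_\G v_{\chi_\alpha} = \lambda_{\alpha,\kappa,q} v_{\chi_\alpha}$ with $\lambda_{\alpha,\kappa,q}=\chi_\alpha(R_\kappa)$ as in \eqref{eq: lambda a,k,q}. For the reversed Cayley graph we have $(A_\G)_{u,v}=1$ iff $v-u\in R_\kappa$, so
$$(A_\G v_{\chi_\alpha})_u=\sum_{r\in R_\kappa}\overline{\chi_\alpha(u+r)}=\overline{\chi_\alpha(u)}\,\overline{\chi_\alpha(R_\kappa)}.$$
This gives eigenvalue $\overline{\lambda_{\alpha}}$. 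Since $\alpha\mapsto-\alpha$ permutes the characters and $\overline{\chi_\alpha(R_\kappa)}=\chi_{-\alpha}(R_\kappa)$, this is harmless, but the index bookkeeping has to be done carefully. If needed, I would instead use $v_{\chi_\alpha}=(\chi_\alpha(g_i))_i$, which has eigenvalue exactly $\lambda_\alpha$.

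\textbf{Spectral decomposition.} Because $A_\G$ is normal with this orthonormal eigenbasis, we have
$$(I+d_\kappa A_\G)^s=\sum_\alpha (1+d_\kappa\lambda_\alpha)^s\, w_{\chi_\alpha} w_{\chi_\alpha}^*.$$
Taking the $(0,\beta)$ entry, and using $\chi_\alpha(0)=1$,
$$\big(w_{\chi_\alpha}w_{\chi_\alpha}^*\big)_{0,\beta}=q^{-m}\,\overline{\chi_\alpha(0)}\,\chi_\alpha(\beta)=q^{-m}\chi_\alpha(\beta).$$
Finally, $\chi_\alpha(\beta)=\chi_\beta(\alpha)$ by the symmetry of $\chi(\alpha\cdot\beta)$. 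This yields
$$N_{m \times s,q}(\kappa, \beta)=q^{-m}\sum_\alpha \chi_\beta(\alpha)(1+d_\kappa\lambda_\alpha)^s,$$
possibly with a conjugate depending on the eigenvector convention.

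\textbf{Matching the conjugate.} The main obstacle is exactly this conjugation/sign matching, so that the stated form $\overline{\chi_\beta(\alpha)}$ appears. I would resolve it by reindexing $\alpha\mapsto-\alpha$, using $\chi_\beta(-\alpha)=\overline{\chi_\beta(\alpha)}$ together with the precise eigenvalue attached to each eigenvector.

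As an independent check I would also prove the formula directly, bypassing matrices. Write
$$N=\sum_{x\in\ff_q^s} q^{-m}\sum_\alpha \chi_\alpha\Big(\textstyle\sum_j x_j^{\kappa}-\beta\Big).$$
The inner sum factors as $\overline{\chi_\alpha(\beta)}\,\big(\sum_{x\in\ff_q}\chi_\alpha(x^\kappa)\big)^s$. Moreover,
$$\sum_{x\in\ff_q}\chi_\alpha(x^\kappa)=1+\sum_{x\in\ff_q^*}\chi_\alpha(x^\kappa)=1+d_\kappa\lambda_\alpha$$
by the computation in the proof of Theorem \ref{teo: Spec G(K,q)} (Lemma \ref{lem: size Rk}). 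This orthogonality argument gives the stated formula with exactly the conjugate on $\chi_\beta(\alpha)$, and it fixes the sign convention unambiguously.
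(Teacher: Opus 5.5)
Your proposal is correct. Your first route is the paper's own argument: start from Theorem~\ref{teo: Nmsq no-homogeneo con matrices}, expand $(I_{q^m}+d_\kappa A_\G)^s$ in the orthonormal character eigenbasis, and read off the $(0,\beta)$ entry.

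Your eigenvalue check is sharper than the paper's treatment, which just asserts that $(\overline{\chi_\alpha(g)})_g$ belongs to $\lambda_{\alpha,\kappa,q}$. Take the convention $(A_\G)_{u,v}=1$ iff $v-u\in R_\kappa$, which is the one under which $(A_\G^\ell)_{0,\beta}$ counts walks from $0$ to $\beta$. Then that vector belongs to $\overline{\lambda_{\alpha,\kappa,q}}=\lambda_{-\alpha,\kappa,q}$. Consequently your intermediate display $q^{-m}\sum_\alpha\chi_\beta(\alpha)(1+d_\kappa\lambda_{\alpha,\kappa,q})^s$ is, as written, $N_{m\times s,q}(\kappa,-\beta)$, which differs from $N_{m\times s,q}(\kappa,\beta)$ when $R_\kappa\neq -R_\kappa$. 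With the correct pairing you get $q^{-m}\sum_\alpha\chi_\alpha(\beta)(1+d_\kappa\overline{\lambda_{\alpha,\kappa,q}})^s$. Any of the following then yields the stated formula: your reindexing $\alpha\mapsto-\alpha$, simply conjugating this real number, or using the eigenvectors $(\chi_\alpha(g))_g$ from the outset.

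Your second route is genuinely different and more elementary. It combines orthogonality of the characters of $(\ff_q)^m$ with the $d_\kappa$-to-one fibres of $x\mapsto(x^{k_1},\ldots,x^{k_m})$ on $\ff_q^*$ (Lemma~\ref{lem: size Rk}). This proves the formula directly, without walks, Theorem~\ref{teo: Nmsq no-homogeneo con matrices}, or diagonalization of $A_\G$, and it pins down the conjugate unambiguously. In effect it proves Corollary~\ref{coro: weil bound}$(i)$ first and then substitutes $\mathcal{W}_\chi(f_{\alpha,\kappa})=1+d_\kappa\lambda_{\alpha,\kappa,q}$, which reverses the paper's order. What the matrix route buys in exchange is reading the count as an entry of a function of $A_\G$, expanded over the spectrum and eigenvectors of $\G(\kappa,q)$. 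That graph-theoretic viewpoint is what the paper is after.
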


\begin{proof}
	Put $\G=\G(\kappa,q)$. 
	Let us consider the order labeling of $A_{\G}$ given by $g_1, g_2,\ldots, g_{q^{m}}$ with $g_1=0$.
	By Theorem~\ref{teo: Nmsq no-homogeneo con matrices} and Remark \ref{rem: diagonalization}, we have that
	$$
	N_{m \times s,q}(\kappa, \beta)= \big( R_{\G}^{-1} \, D_{I,\G} \, R_{\G} \big)_{0,\beta}.
	$$
	where 
	$ D_{I,\G}=diag\big( (1+d_{\kappa} \lambda_1)^{s},\ldots,(1+d_{\kappa} \lambda_{q^{m}})^{s} \big)$
	and $\lambda_i \in Spec(\G)$.
	
	Since the eigenvalues of $\G(\kappa,q)$ are given by $\lambda_{\alpha,\kappa,q}$ as in \eqref{eq: lambda a,k,q} with associated unitary eigenvector $w_{\chi_{\alpha}}$ as in \eqref{eq: eigenvectors orthonormales}, when $\alpha$ runs into $(\ff_q)^{m}$, we obtain that 
	$$
	R_{\G}= (w_{\chi_{g_1}} \cdots w_{\chi_{g_{q^{m}}}}).
	$$
	By orthogonality properties of eigenvalues, we have that $R_{\G}^{-1}=R_{\G}^*$ (transpose conjugate). 
	Notice that
	$$
	diag\big((1+d_{\kappa} \lambda_1)^{s}, \ldots, (1+d_{\kappa} \lambda_{q^{m}})^{s} \big) 
	(w_{\chi_{g_1}} \cdots  w_{\chi_{g_{q^{m}}}}) =
	(z_{\chi_{g_1}} \cdots z_{\chi_{g_{q^{m}}}})
	$$
	with $z_{\chi_{\beta}}$ the column vector given by
	$$
	{\purple	z_{\chi_{\beta}}=\tfrac{1}{q^{m/2}}\big((1+d_{\kappa} \lambda_1)^{s} \, \overline{\chi_{\beta}(g_1)},\ldots, (1+d_{\kappa} \lambda_{q^{m}})^{s} \, \overline{\chi_{\beta}(g_{q^m})}\big)^{t}.}
	$$
	
	On the other hand, by taking into account that  the first column and row of $A_{\G}$ are labeled with $0$, we have that $\chi_{g_1}=\chi_{0}$ is the trivial character and hence the first row of $R_{\G}^{*}$ is the constant row with coefficients $\frac{1}{q^{m/2}}$. Thus, a direct matrix computation allow  us to conclude that the  $(0,\beta)$-coordinate of the matrix $R_{\G}^{*} \, D_{I,\G} \, R_{\G}$
	with $\lambda_{\ell}= \lambda_{g_{\ell},\kappa,q}$ is exactly
	$$
	(\tfrac{1}{q^{m/2}},\ldots ,\tfrac{1}{q^{m/2}})\cdot  z_{\chi_{\beta}}^{t}= \tfrac{1}{q^m} \sum_{\alpha \in \ff_{q^m}}  \overline{\chi_\beta(\alpha)} \, \big( 1+d_\kappa \lambda_\alpha \big)^s.
	$$
	This conclude the proof.
\end{proof}

\subsubsection*{Weil sums} 
We now give an alternative expression for the numbers $N_{m \times s,q}(\kappa, \beta)$ entirely in terms of the canonical additive character of $\ff_q$. In particular, we use the Weil sums associated to this character.

Recall that $f(x)$ is non-degenerate if $f(x) \ne g(x)^p-g(x)$ for any $g(x) \in  \ff_q[x]$, where $p$ is the characteristic of $\ff_q$.

\begin{coro} \label{coro: weil bound}
	Let $q$ be a prime power, $s,m \in \N$, let $\kappa =(k_1,\ldots,k_m) \in \N_<^{m}$ and $\beta \in  \ff_q$.
	
	\noindent $(i)$ 
	The number of solutions of the monic system $\mathcal{S}_{m \times s, q} (\kappa, \beta)$ as in \eqref{eq: monic SDE} is given by 
	\begin{equation} \label{eq: M spec alternativo gen}
		N_{m \times s, q} (\kappa,\beta) = \tfrac{1}{q^{m}} \sum_{\alpha \in \ff_q^m} \overline{\chi(\alpha \cdot \beta)} \big( \mathcal{W}_\chi (f_{\alpha,\kappa}) \big)^{s}
	\end{equation}
	with 
	$ f_{\alpha,\kappa}(x) = \alpha_1 x^{k_1} + \cdots + \alpha_m x^{k_m} \in \ff_q[x] $ 
	and where $\chi$ is the canonical additive character of $\ff_q$ as defined in \eqref{eq: canonical character},  $\mathcal{W}_\chi(f_{\alpha,\kappa})$ stands for the associated Weil sum as in \eqref{eq: Weil sum} and $\alpha \cdot \beta$ denotes the usual inner product on $\ff_{q^m}$. 
	
	\noindent $(ii)$ 
	Furthermore, if we assume that 
	$f_{\alpha,\kappa} \in \ff_q[x]$ is non-degenerate for all $\alpha \in (\ff_q)^m$ with $\alpha \ne (0,\ldots,0)$, 
	then we have the Weil-type bound
	\begin{equation} \label{eq: weil bound gen}
		\left| N_{m \times s, q} (\kappa,\beta) - q^{s-m} \right| \le (q^m-1) \, q^{\frac s2-m} (k_m-1)^s,
	\end{equation}
	which does not depend on $\beta$.
\end{coro}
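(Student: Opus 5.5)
For part $(i)$ I would start from Theorem \ref{teo: Nmsq chars y spec} and substitute the eigenvalue formula of Theorem \ref{teo: Spec G(K,q)}. Since $\lambda_{\alpha,\kappa,q} = \tfrac{1}{d_\kappa}\big(\mathcal{W}_\chi(f_{\alpha,\kappa})-1\big)$, the factor appearing in \eqref{eq: gral fla} simplifies to
$$ 1 + d_\kappa \lambda_{\alpha,\kappa,q} = \mathcal{W}_\chi(f_{\alpha,\kappa}). $$
This cancellation is exactly why the factor $d_\kappa$ appears in Theorem \ref{teo: Nmsq no-homogeneo con matrices}. The character term needs only a small check. By the product description of characters of $(\ff_q)^m$,
$$ \chi_\beta(\alpha) = \prod_i \chi(\beta_i\alpha_i) = \chi(\alpha\cdot\beta), $$
using additivity of $\chi$. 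The sum over $\alpha$ in \eqref{eq: gral fla}, written there as $\alpha\in\ff_{q^m}$, should be read as a sum over $(\ff_q)^m$. Substituting both facts gives \eqref{eq: M spec alternativo gen}.

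As an independent sanity check I would also derive $(i)$ directly by orthogonality. Write
$$ N = \tfrac{1}{q^m}\sum_{x\in\ff_q^s}\sum_{\alpha}\chi\Big(\alpha\cdot\big(\textstyle\sum_j x_j^{\kappa}-\beta\big)\Big). $$
The inner sum over $x$ factors into $s$ copies of $\sum_{x\in\ff_q}\chi(f_{\alpha,\kappa}(x)) = \mathcal{W}_\chi(f_{\alpha,\kappa})$.

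For part $(ii)$ I would split off the term $\alpha=0$. There $f_{0,\kappa}=0$, so $\mathcal{W}_\chi(f_{0,\kappa}) = q$, and that term contributes $q^{s}/q^m = q^{s-m}$. It remains to bound the other terms. Since $|\overline{\chi(\alpha\cdot\beta)}|=1$, the triangle inequality gives
$$ \big|N - q^{s-m}\big| \le \tfrac{1}{q^m}\sum_{\alpha\neq 0}\big|\mathcal{W}_\chi(f_{\alpha,\kappa})\big|^s . $$
This is also the reason the bound cannot depend on $\beta$.

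The key input is Weil's bound: for a non-degenerate $f$ of degree $n$, $|\mathcal{W}_\chi(f)| \le (n-1)q^{1/2}$. Here $\deg f_{\alpha,\kappa}\le k_m$, so the bound is $(k_m-1)q^{1/2}$. Summing over the $q^m-1$ nonzero $\alpha$ then yields $(q^m-1)q^{s/2-m}(k_m-1)^s$.

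The main obstacle is the degree issue in Weil's bound. The bound is stated for $\gcd(\deg f,q)=1$, or more generally for non-degenerate $f$ with $\deg f$ the actual degree. When $\alpha_m=0$ the degree drops to some $k_i<k_m$, which only improves the bound, since $(k_i-1)\le(k_m-1)$. I would invoke the non-degenerate form of Weil's bound, as in \cite{LN}, so that the hypothesis of $(ii)$ is exactly what is needed. I would also note that the case where $\deg f_{\alpha,\kappa}$ is divisible by $p$ is covered by that version of the theorem under non-degeneracy.
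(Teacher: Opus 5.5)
Your proposal is correct and follows the paper's proof. For (i) you substitute $1+d_\kappa\lambda_{\alpha,\kappa,q}=\mathcal{W}_\chi(f_{\alpha,\kappa})$ and $\chi_\beta(\alpha)=\chi(\alpha\cdot\beta)$ into Theorem~\ref{teo: Nmsq chars y spec}, and for (ii) you isolate the $\alpha=0$ term and apply the Weil bound with $\deg f_{\alpha,\kappa}\le k_m$ to the remaining $q^m-1$ terms, exactly as the paper does. Your orthogonality check is a welcome graph-free derivation of (i), and your remark on the degree is sound: the non-degenerate form of Weil's bound follows from the $\gcd(\deg f,q)=1$ form by replacing each term $a x^{pj}$ with $a^{1/p}x^{j}$, which leaves the Weil sum unchanged and does not increase the degree.
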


\begin{proof}
	($i$) 
	Notice that 
	$$
	\overline{\chi_{\beta}(\alpha)}= e^{-\frac{2\pi i}{p}\Tr_{q/p}(\alpha_1 \beta_1+\cdots +\alpha_{m} \beta_{m})}=\overline{\chi(\alpha \cdot \beta)},
	$$
	with $\chi$ the canonical additive character of $\ff_q$.
	Therefore, the equation \eqref{eq: M spec alternativo gen} is a direct consequence of Theorem \ref{teo: Nmsq chars y spec} and \eqref{eq: autovalores de G(kapkka,q)}.

	\noindent ($ii$) 
	We separate the contribution of the trivial character ($\alpha = 0$) in the sum in \eqref{eq: M spec alternativo gen}. For $\alpha = 0$, we have $f_{0,\kappa}(x) = 0$ and $\chi(0\cdot \beta)=1$, which yields $\mathcal{W}_\chi(0) = q$. Thus, 
	$$ N_{m\times s,q}(\kappa,\beta) = q^{s-m} + \tfrac{1}{q^m} \sum_{\alpha \ne 0} \overline{\chi(\alpha \cdot \beta)} \big( \mathcal{W}_\chi (f_{\alpha,\kappa}) \big)^s. $$
	Now we use the Weil bound: if $f(x)$ is a non-degenerate polynomial in $\ff_q[x]$ of degree $d$, then $|\mathcal{W}_\chi(f)| \le (d-1) \sqrt q.$ 
	Applying this bound to the remaining $q^m - 1$ terms (since $f_{\alpha,\kappa}$ is non-degenerate for $\alpha \ne 0$ and its degree is at most $k_m$) and taking into account that $|\chi(\alpha \cdot \beta)|=1$, we obtain
	$$	
	\left| N_{m\times s,q}(\kappa,\beta ) - q^{s-m} \right|  \le \tfrac{1}{q^m} \sum_{\alpha \ne 0} | \mathcal{W}_\chi (f_{\alpha,\kappa}) |^s 
	\le \tfrac{q^m-1}{q^m} \, q^{\frac s2} \, (k_m-1)^s, 
	$$ 
	from which \eqref{eq: weil bound gen} follows, thus concluding the proof.
\end{proof}

\subsection{The homogeneous case}
We now focus on the case of homogeneous systems of monic diagonal equations, that is the SDE in \eqref{eq: monic SDE} with 
$\beta_1=\cdots =\beta_m=0$. Namely, for $\kappa = (k_1,\ldots, k_m) \in \N_<^m$, the system  
	\begin{equation} \label{eq: homog SDE}
		\mathcal{S}_{m \times s} (\kappa) :=  \quad
		\begin{cases}
			\begin{tabular}{ccccccc}	
				$X_1^{k_1}$ & $+$ & $\cdots$ & $+$ & $X_s^{k_1}$  & $=$ & $0$, \\[1mm]
				$\vdots$          &       & $\vdots$ & & $\vdots$  &  & $\vdots$ \\[1mm]
				$X_1^{k_m}$ & $+$ & $\cdots$ & $+$ & $X_s^{k_m}$  & $=$ & $0$.   
			\end{tabular}
		\end{cases}
	\end{equation}
Since a monic homogeneous system has only coefficients $0$'s and $1$'s, we have dropped the $q$ from the notation and just write $\mathcal{S}_{m \times s} (\kappa)$ instead of $\mathcal{S}_{m \times s, q} (\kappa)$.

We now show that the number of solutions of the system $\mathcal{S}_{m \times s, q} (\kappa)$ in \eqref{eq: homog SDE} can be expressed in terms of the spectrum of the diagonal GP-graph $\G(\kappa,q)$. 
	
\begin{thm} \label{teo: M with Spec}
Let $q$ be a prime power, $s,m \in \N$ and let $\kappa =(k_1,\ldots,k_m) \in \N_<^{m}$.
The number of solutions in $(\ff_q)^s$ of the homogeneous system $\mathcal{S}_{m \times s, q} (\kappa)$ in \eqref{eq: homog SDE} is given by 
\begin{equation} \label{eq: M spec}
	N_{m \times s, q} (\kappa) = \tfrac{1}{q^{m}} \sum_{\lambda \in Spec(\G)} (1 + d_\kappa \, \lambda )^{s}
\end{equation}	
where $\G=\G(\kappa,q)$ and $d_\kappa$ is as defined in \eqref{eq: k hat}.
\end{thm}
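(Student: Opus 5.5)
This is the special case $\beta=0$ of Theorem~\ref{teo: Nmsq chars y spec}, and I will derive it from there. Setting $\beta=(0,\ldots,0)$ in \eqref{eq: gral fla}, the character $\chi_0$ is trivial, so $\overline{\chi_0(\alpha)}=1$ for every $\alpha\in(\ff_q)^m$. We get
$$ N_{m\times s,q}(\kappa)=\tfrac{1}{q^m}\sum_{\alpha\in(\ff_q)^m}\big(1+d_\kappa\lambda_{\alpha,\kappa,q}\big)^s .$$
It then remains to check that, as $\alpha$ runs over $(\ff_q)^m$, the values $\lambda_{\alpha,\kappa,q}$ run over $Spec(\G)$ with the correct multiplicities. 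This holds because the $q^m$ vectors $w_{\chi_\alpha}$ in \eqref{eq: eigenvectors orthonormales} form an orthonormal basis of eigenvectors of $A_\G$, by orthogonality of characters. Each eigenvalue $\lambda$ therefore occurs among the $\lambda_{\alpha,\kappa,q}$ exactly as many times as its multiplicity, and the sum over $\alpha$ equals the sum over $\lambda\in Spec(\G)$ counted with multiplicity. This gives \eqref{eq: M spec}.

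As an independent check, I would also run a trace argument that avoids eigenvectors. By Theorem~\ref{teo: Nmsq no-homogeneo con matrices}, $N_{m\times s,q}(\kappa)=\big((I+d_\kappa A_\G)^s\big)_{0,0}$. Since $\G$ is a Cayley graph, it is vertex-transitive: translation by $v$ is an automorphism, so every diagonal entry of any polynomial in $A_\G$ is the same. Hence
$$\big((I+d_\kappa A_\G)^s\big)_{0,0}=\tfrac{1}{q^m}\Tr\big((I+d_\kappa A_\G)^s\big).$$
The trace of a matrix polynomial is the sum of that polynomial evaluated at the eigenvalues with multiplicity. This holds even without diagonalizability (by triangularization), although Remark~\ref{rem: diagonalization} already supplies it.

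There is no real obstacle. The only point needing care is the multiplicity bookkeeping, namely that "$\lambda\in Spec(\G)$" is read as a multiset, and the trace argument makes this automatic.
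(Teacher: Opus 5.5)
Your proof is correct, but your main line is not the route the paper takes for this theorem. Specializing Theorem~\ref{teo: Nmsq chars y spec} at $\beta=0$ is exactly the alternative the authors mention in the remark after the theorem. Their actual proof is the walk/trace argument that you treat as an ``independent check''.

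They compute, for each $1\le \ell\le s$,
\[
\Tr(A_\G^\ell)=\sum_v w_\G(\ell;v,v)=q^m d_\kappa^{-\ell}N^*_{m\times\ell,q}(\kappa,0),
\]
using Lemma~\ref{lema: walks}. There, the fact that the closed-walk count does not depend on the base vertex plays the role of your vertex-transitivity. They then identify this trace with $\sum_{\lambda\in Spec(\G)}\lambda^\ell$, and assemble $N_{m\times s,q}(\kappa)=1+\sum_{\ell}\binom{s}{\ell}N^*_{m\times\ell,q}(\kappa,0)$ from Proposition~\ref{prop: recursion of Nmxsq} and the binomial theorem. Your trace version uses the same ingredients in the opposite order: you assemble first, via Theorem~\ref{teo: Nmsq no-homogeneo con matrices}, and then take a single trace of $(I+d_\kappa A_\G)^s$.

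The two routes buy different things.
\begin{itemize}
\item The character specialization is a one-liner once the general-$\beta$ formula is available. However, it relies on the explicit eigenbasis $w_{\chi_\alpha}$ and its pairing with the eigenvalues $\lambda_{\alpha,\kappa,q}$. Depending on the row/column convention for $A_\G$, that pairing is really with $\lambda_{-\alpha,\kappa,q}=\overline{\lambda_{\alpha,\kappa,q}}$. This is harmless here, because only the multiset $\{\lambda_{\alpha,\kappa,q}\}_{\alpha}$ enters the formula.
\item The trace route needs no eigenvectors and gets multiplicities for free. It uses only that translations are automorphisms, so it is immune to such convention issues. It is the more robust argument, and it is the ``graph-theoretic'' one the authors chose.
\end{itemize}
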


\begin{proof}
Let $s$ and $m$ be fixed.  
Notice that, by \eqref{eq: walk sol} in Lemma \ref{lema: walks}, for any $v\in (\ff_q)^m$ and for each $1 \le \ell \le s$ we have 
	$$w_{\G(\kappa,q)}(\ell;v,v) = \frac{1}{d_\kappa^{\ell}} \, N_{m \times \ell,q}^*(\kappa,0).$$
On the other hand, it is well-known that 
	$$w_{\G(\kappa,q)}(\ell;v,v) = (A_\G^{\ell})_{v,v},$$
where $A_\G$ is the adjacency matrix of $\G=\G(\kappa,q)$.
Hence, putting these facts together we arrive at 
	$$ \Tr (A_\G^{\ell}) = \sum_{v\in \ff_{q}^{m}}(A_\G^{\ell})_{v,v} = \frac{q^{m}}{d_\kappa^{\ell}} \, N_{m \times \ell,q}^*(\kappa,0).$$
Thus, using that the trace of a matrix is the sum of its eigenvalues, and since the eigenvalues of $A_\G^{\ell}$ are  $\lambda^{\ell}$ with $\lambda$ an eigenvalue of $A_\G$, 
we obtain that
	$$ \sum_{\lambda\in Spec(\G)} \lambda^{\ell} = \frac{q^{m}}{d_\kappa^{\ell}} \, N_{m \times \ell,q}^*(\kappa,0) $$ 
for all $1 \le \ell \le s$.
By \eqref{eq: recursive fla v,w}, we have that
\begin{equation} \label{eq: binomial aux}
	\begin{aligned}
		N_{m,s} &= 1+\sum_{\ell=1}^{s} \tbinom{s}{\ell} N_{m \times \ell,q}^* (\kappa,0)\\ 
		&= 1+\tfrac{1}{q^{m}} \sum_{\ell=1}^{s} \sum_{\lambda\in Spec(\G)} \tbinom{s}{\ell} \lambda^{\ell} 			d_\kappa^{\ell} = \tfrac{1}{q^{m}} \sum_{\lambda \in Spec(\G)} \sum_{\ell=0}^{s} \tbinom{s}{\ell} \lambda^{\ell}d_\kappa^{\ell}.		
	\end{aligned}
\end{equation}
Therefore, by using the binomial theorem we obtain \eqref{eq: M spec} as asserted. 
\end{proof}
	
Notice that, if $Spec(\Gamma(\kappa,q)) = \{ [\lambda_1]^{m_1}, \ldots, [\lambda_t]^{m_t}\}$ then \eqref{eq: M spec} takes the form
	\begin{equation} \label{eq: M spec mult}
		N_{m \times s,q}(\kappa) = \tfrac{1}{q^{m}} \sum_{i=1}^t m_i \, (1+ \lambda_i d_\kappa)^{s}.
	\end{equation}

\begin{rem}
The previous theorem can also be obtained from our results from adjacency matrices as we next explain. 
However, we have preferred to give a more graph-theoretic proof using walks.
Notice that Theorem \ref{teo: M with Spec} can be obtained straight from Theorem \ref{teo: Nmsq chars y spec} by taking $\beta=0 \in (\ff_q)^m$ since $\chi_{(0,\ldots,0)} (\alpha) = \chi_0(\alpha_1) \cdots \chi_0(\alpha_m) = 1$ where $\chi_0$ is the trivial character in $\ff_q$, i.e. $\chi_0(g)=1$ for every $g \in (\ff_q)^m$.
\end{rem}

Two graphs $G,G'$ are said to be \textit{isospectral} if they have the same spectrum, i.e. $Spec(G)=Spec(G'$). The following is automatic from Theorem \ref{teo: M with Spec}.
	
\begin{coro}
If $\G(\kappa,q)$ and $\G(\kappa',q)$ are isospectral diagonal GP-graphs and $d_\kappa = d_{\kappa'}$ then $N_{m \times s,q}(\kappa) = N_{m \times s,q}(\kappa')$ for any $s\in \N$. 
\end{coro}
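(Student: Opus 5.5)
This follows directly from Theorem \ref{teo: M with Spec}. The plan is to write both counts via the spectral formula \eqref{eq: M spec} and then compare term by term.

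First, note that $\G(\kappa,q)$ and $\G(\kappa',q)$ both have vertex set $(\ff_q)^m$. So the normalizing factor $\tfrac{1}{q^m}$ in \eqref{eq: M spec} is the same for both graphs; implicitly $\kappa,\kappa'\in\N_<^m$ share the same $m$, since isospectral graphs have the same number $q^m$ of vertices.

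Next, write the spectra with multiplicities. Isospectrality means $Spec(\G(\kappa,q))=Spec(\G(\kappa',q))=\{[\lambda_1]^{m_1},\ldots,[\lambda_t]^{m_t}\}$ as multisets. Applying \eqref{eq: M spec mult} to each system gives
$$N_{m\times s,q}(\kappa)=\tfrac{1}{q^m}\sum_{i=1}^t m_i(1+\lambda_i d_\kappa)^s, \qquad N_{m\times s,q}(\kappa')=\tfrac{1}{q^m}\sum_{i=1}^t m_i(1+\lambda_i d_{\kappa'})^s.$$
The hypothesis $d_\kappa=d_{\kappa'}$ makes the two right-hand sides identical for every $s\in\N$, and the equality of the counts follows.

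There is no real obstacle here. The only point to check is that the multiplicities, and not merely the sets of eigenvalues, coincide. This is exactly what the definition of isospectrality guarantees, since the spectrum is defined with multiplicities.
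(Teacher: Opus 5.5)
Your proposal is correct and is the same argument as the paper, which calls the corollary automatic from Theorem \ref{teo: M with Spec}: both counts are given by \eqref{eq: M spec mult} with the same factor $\tfrac{1}{q^m}$, the same eigenvalues with the same multiplicities, and the same $d_\kappa$. As a side remark, the hypothesis $d_\kappa=d_{\kappa'}$ is in fact redundant, since isospectral graphs have the same spectral radius, and here the spectral radius is the degree, so $\frac{q-1}{d_\kappa}=\frac{q-1}{d_{\kappa'}}$.
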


Note that in the previous corollary, necessarily $m\ge 2$, since for classic GP-graphs there can not exist an isospectral non-isomorphic pair $\G(k,q)$, $\G(k',q)$.

\subsubsection*{Spectral moments}
Finally, we give an alternative expression using spectral moments.
We recall that, given $k\in \N_0$, the \textit{spectral $k$-th moment} of a graph $G$ is given by 
\begin{equation} \label{eq: momentos}
	M_k(G) = \sum_{\lambda \in Spec(G)} \lambda^k.	
\end{equation}

By a direct application of Newton's binomial, the numbers $N_{m \times s,q}(\kappa)$ can be expressed in terms of the spectral moments of $\G(\kappa, q)$.
	
\begin{coro} \label{coro: M con momentos}
In the previous notations we have 
	\begin{equation} \label{eq: M con momentos}
		N_{m \times s, q}(\kappa) = 1+ \tfrac{1}{q^{m}} \sum_{2 \le j \le s} \tbinom sj \, d_\kappa^{j} \, M_j(\G(\kappa,q)). 
	\end{equation}	
\end{coro}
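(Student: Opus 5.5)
Starting from Theorem \ref{teo: M with Spec}, we expand $(1+d_\kappa\lambda)^s$ by Newton's binomial and interchange the finite sums:
$$ N_{m\times s,q}(\kappa)=\tfrac{1}{q^m}\sum_{j=0}^{s}\tbinom sj d_\kappa^j \sum_{\lambda\in Spec(\G)}\lambda^j=\tfrac1{q^m}\sum_{j=0}^s\tbinom sj d_\kappa^j M_j(\G(\kappa,q)). $$
It then remains only to identify the terms $j=0$ and $j=1$.

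For $j=0$, the moment $M_0(\G)$ counts the eigenvalues with multiplicity, so $M_0=q^m$ because $\G(\kappa,q)$ has $q^m$ vertices. This term contributes exactly $1$.

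For $j=1$, the moment is $M_1(\G)=\Tr(A_\G)$. Since $\G(\kappa,q)$ is loopless by Proposition \ref{prop: diag GPs}, all diagonal entries of $A_\G$ vanish, and hence $M_1=0$. Equivalently, in the proof of Theorem \ref{teo: M with Spec} we have $\Tr(A_\G)=\frac{q^m}{d_\kappa}N^*_{m\times1,q}(\kappa,0)$, and $N^*_{m\times1,q}(\kappa,0)=0$ because $x^{k_1}=0$ forces $x=0$. So the $j=1$ term drops out.

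Substituting these two values leaves exactly the sum over $2\le j\le s$, which gives \eqref{eq: M con momentos}. There is no real obstacle; the only point needing care is the vanishing of the first moment, which follows from looplessness.
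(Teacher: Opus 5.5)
Your proposal is correct and follows essentially the same route as the paper: expand $(1+d_\kappa\lambda)^s$ in Theorem~\ref{teo: M with Spec}, then use $M_0(\G)=q^m$ and $M_1(\G)=0$. Your justification of $M_1=0$ via $\Tr(A_\G)=0$, since $0\notin R_\kappa$ and so $\G(\kappa,q)$ is loopless, is slightly more explicit than the paper's, which simply asserts that the eigenvalues sum to zero.
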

	
\begin{proof}
Let $\G=\G(\kappa,q)$. Applying Newton's binomial in \eqref{eq: M spec}, that is by \eqref{eq: binomial aux}, and using the definition of the spectral moments \eqref{eq: momentos}, we have 
$$	N_{m \times s, q} (\kappa) = \frac{1}{q^{m}} \sum_{j=0}^s \tbinom{s}{j} \, d_\kappa^{j} \, M_j(\G) .$$
The result follows by using that 
$M_0(\G) = \sum_{\lambda \in \mathcal{S}} 1 = |\G| = q^m$ 
and $M_1(\G) = \sum_{\lambda \in \mathcal{S}} \lambda = 0$,
where $\mathcal{S}=Spec(\G)$.
\end{proof}

We now give a baby example so that we can show how to apply our previous results.	

\begin{exam} \label{ejem: miniCao}
Let us compute the exact number of solutions of the following monic homogeneous system of $m=2$ diagonal equations with $s=3$ variables over $\mathbb{F}_{3^6}$ given by the power vector $\kappa = (1, 4)$
\begin{equation} \label{eq: miniCao}
	\begin{cases}
		X_1 + X_2 + X_3 = 0, \\[1.25mm]
		X_1^4 + X_2^4 + X_3^4 = 0.
	\end{cases}
\end{equation}
Here, the field size is $q = 3^6 = 729$ and $d_\kappa = \gcd(1, 4, q-1) = 1$. We will solve this using three different approaches developed in Sections 3 and 4, using the undirected connected (see Proposition \ref{prop: diag GPs}) diagonal GP-graph $\G=\G(\kappa, q) = \G((1,4),729)$.
	
\noindent \textit{Method 1: using the adjacency matrix}. 
By Theorem \ref{teo: Nmsq no-homogeneo con matrices}, the number of solutions is given by the $(0,0)$-entry of the matrix $(I_{q^m} + d_\kappa A_\Gamma)^s$, where $A_\Gamma$ is the adjacency matrix of the diagonal GP-graph $\Gamma$. Since $s=3$ and $d_\kappa = 1$, we expand the binomial:
	\[ N_{2 \times 3, q}(\kappa) = \left[ (I + A_\Gamma)^3 \right]_{0,0} = [I]_{0,0} + 3[A_\Gamma]_{0,0} + 3[A_\Gamma^2]_{0,0} + [A_\Gamma^3]_{0,0}. \]
We evaluate each term by counting closed walks in $\Gamma$:
\begin{enumerate}[$\bullet$]
	\item $[I]_{0,0} = 1$, accounting for the trivial walk of length 0. \sk 

	\item $[A_\Gamma]_{0,0} = 0$, since $0 \notin R_\kappa$, meaning the graph has no loops. \sk 

	\item $[A_\Gamma^2]_{0,0} = 0$, which counts closed walks of length 2. This requires $x_1, x_2 \in \mathbb{F}_q^*$ such that $x_1+x_2=0$ (so $x_2 = -x_1$) and $x_1^4+x_2^4=0$. In characteristic 3, this implies $2x_1^4 = 0$, forcing $x_1=0$, a contradiction since $x_i \neq 0$. \sk 

	
	\item $[A_\Gamma^3]_{0,0} = 728$, which counts strictly non-zero solutions. Algebraically, substituting $x_3 = -x_1 - x_2$ into the second equation yields $x_1^4 + x_2^4 + (-x_1 - x_2)^4 = 0$. In characteristic 3, expanding this gives $x_1^4 + x_2^4 + x_1^4 + x_1^3 x_2 + x_1 x_2^3 + x_2^4 = 0$, which simplifies to 
	$$ (x_1^3 - x_2^3)(x_1 - x_2) = (x_1 - x_2)^4 = 0.$$ 
	This forces $x_1 = x_2$, and consequently $x_3 = x_1$, meaning the solutions are restricted to the diagonal $x_1 = x_2 = x_3 = x$. There are exactly $q - 1 = 728$ such non-zero elements $x \in \mathbb{F}_q^*$.
	\end{enumerate}
Summing these contributions, we obtain 
	$$ N_{2 \times 3, q}(\kappa) = 1 + 0 + 0 + 728 = 729.$$

\noindent \textit{Method 2: using the spectrum and Weil sums}. 
Alternatively, using the spectral approach for homogeneous systems, Corollary \ref{coro: weil bound} gives the number of solutions in terms of Weil sums:
	\[ N_{2 \times 3, q}(\kappa) = \tfrac{1}{q^2} \sum_{\alpha \in \mathbb{F}_q^2} \Big( \mathcal{W}_\chi(f_{\alpha,\kappa}) \Big)^3 = \frac{1}{q^2} \sum_{\alpha_1, \alpha_2 \in \mathbb{F}_q} \Big( \sum_{x \in \mathbb{F}_q} \chi(\alpha_1 x + \alpha_2 x^4) \Big)^3. \]
	Expanding the cube yields a triple sum over $x_1, x_2, x_3 \in \mathbb{F}_q$. Reordering the summation to evaluate the characters first, we get:
	\[ N_{2 \times 3, q}(\kappa) = \tfrac{1}{q^2} \sum_{x_1, x_2, x_3 \in \mathbb{F}_q} \Big( \sum_{\alpha_1 \in \mathbb{F}_q} \chi(\alpha_1(x_1+x_2+x_3)) \Big) \Big( \sum_{\alpha_2 \in \mathbb{F}_q} \chi(\alpha_2(x_1^4+x_2^4+x_3^4)) \Big). \]
	By the orthogonality of characters, the inner sums evaluate to $q$ if the respective equations are satisfied, and $0$ otherwise. Thus, each tuple $(x_1, x_2, x_3)$ that is a solution to the system contributes exactly $q \times q = q^2$ to the sum. Since the only solutions are of the form $x_1=x_2=x_3=x$ for any $x \in \mathbb{F}_q$, there are exactly $q$ valid tuples. Therefore, the sum collapses to:
	\[ N_{2 \times 3, q}(\kappa) = \tfrac{1}{q^2} (q \cdot q^2) = q = 729. \]

\noindent \textit{Method 3: using spectral moments}. 
Finally, we can compute the number of solutions via the spectral moments of the diagonal GP-graph $\Gamma = \Gamma(\kappa, q)$ as stated in Corollary \ref{coro: M con momentos}:
	\[ N_{2 \times 3, q}(\kappa) = 1 + \tfrac{1}{q^m} \sum_{2 \le j \le s} \tbinom{s}{j} \, d_\kappa^j \, M_j(\Gamma), \]
where $s=3$, $m=2$, $d_\kappa = 1$, and $M_j(\Gamma)$ denotes the $j$-th spectral moment of the graph. Substituting these values yields:
	\[ N_{2 \times 3, q}(\kappa) = 1 + \tfrac{1}{q^2} \left\{ \tbinom{3}{2} M_2(\Gamma) + \tbinom{3}{3} M_3(\Gamma) \right\} = 1 + \tfrac{3 M_2(\Gamma) + M_3(\Gamma)}{q^2}. \]
By definition, the spectral moments correspond to the total number of closed walks of length $j$ in the graph, which are related to the total non-zero solutions of the homogeneous systems:
\begin{itemize}
	\item $M_2(\Gamma) = \operatorname{Tr}(A_\Gamma^2) = 0$, since there are no closed walks of length 2 in this graph. \msk 
	
	\item $M_3(\Gamma) = \operatorname{Tr}(A_\Gamma^3) = q^m N^*_{2 \times 3, q}(\kappa) = q^2 (q - 1) = q^2(q - 1)$.
\end{itemize}
Substituting $M_2(\Gamma)$ and $M_3(\Gamma)$ back into the formula gives:
	\[ N_{2 \times 3, q}(\kappa) = 1 + \tfrac{0 + q^2(q - 1)}{q^2} = 1 + (q - 1) = q = 729. \]

As we can joyfully see, all three methods independently confirm the exact count of 729 solutions. 
Furthermore, since $N_{2 \times 3, q}(\kappa)=3^6=q=q^{s-m}$, the number of solutions is exactly the expected one by the Weil-type bound in Corollary \ref{coro: weil bound}.

Notice that $N_{2 \times 3,q}(0)=q=q^{3-2}$, that is the number of solutions is exactly the expected value, so the LHS of the Weil bound \eqref{eq: weil bound gen} in Corollary \ref{coro: weil bound} is 0.
\hfill $\diamond$
\end{exam}

\subsubsection*{The Cao-Chou-Gu's systems of two equations}
We close the section with some comments on certain monic homogeneous systems of 2 diagonal equations.
In 2016, Cao, Chou and Gu studied in Section 3 of \cite{CCG} certain monic systems of two diagonal equations. 
In particular, in Theorem~3.3, they give the number of solutions of the monic homogeneous systems of diagonal equations 
\begin{equation} \label{eq: Cao}
	\begin{cases}
		\begin{tabular}{ccccccc}	
			$X_1$        & $+$ & $\cdots$ & $+$ & $X_s$  		& $=$ & $0$, \\[1mm]
			$X_1^{p^{\ell}+1}$    & $+$ & $\cdots$ & $+$ & $X_s^{p^{\ell}+1}$  	& $=$ & $0$, 
		\end{tabular}
	\end{cases}
\end{equation}
where $s\ge 2$, $\ell\ge 1$ and $q=p^{2\ell t}$ for $p$ prime and $t \in \N$. They consider four cases: they distinguish the cases $p\mid s$ or not, and for each case, the situations $p,t$ both odd or not. So, in our notation they found 
$N_{2\times s, q} (\kappa,0)$ 
where $q=p^{2\ell t}$ and $\kappa=(1,p^\ell+1)$, for any $\ell, t \in \N$, $s\ge 2$ and $p$ prime.

\begin{rem}[\textit{The Cao-Chou-Gu's systems}] \

\noindent ($i$) 
By Remark \ref{rem: binomial inversion fla}, using the expressions in Theorem 3.3 in \cite{CCG} for the number of solutions of the systems \eqref{eq: Cao}, we can obtain the number of walks in any diagonal GP-graph $\G((1,p^\ell+1),p^{2\ell t})$ for any $\ell, t\in \N$ and $p$ prime.

\noindent ($ii$) 
It is worth pointing out a minor algebraic typo in Theorem 3.3 of \cite{CCG} regarding the above systems of diagonal equations. For the cases where both $p$ and $s$ are odd 
the published formulas in Theorem 3.3 of \cite{CCG} yields fractional results for certain parameters due to uncancelled terms in the numerator during the character sum expansion. 
For the case where both $p$ and $s$ are odd, the corrected number of solutions can be expressed in a unified piecewise formula depending on the divisibility of $n$ by $p$, in our notation we have:
\begin{equation} \label{eq: cao16}
	N_{2\times s, q} (\kappa, 0) = p^{2(s-2)\ell t} + \frac{p^{(s-4)\ell t}(p^{2\ell t} - 1)}{p^\ell + 1}
	\begin{cases} 
		p^{\ell(s+2t-2)} + (-1)^s p^{\ell(2t+1)}   & \text{if } p \mid s, \\[1ex]
		p^{\ell(s+t-1)} + (-1)^{s+1} p^{\ell(t+1)} & \text{if } p \nmid s.
	\end{cases}
\end{equation}
		
\noindent ($iii$) 
The system \eqref{eq: miniCao} in Example \ref{ejem: miniCao} is a particular case of \eqref{eq: Cao}. Using this corrected formula \eqref{eq: cao16} for our previous example with $p=s=t=3$ and $\ell=1$, the numerator evaluates to $3^{3+6-2} + (-1)^3 3^{6+1} = 3^7 - 3^7 = 0$. Consequently, \eqref{eq: Cao} correctly yields
		\[ N_{2\times 3, 3^6} ((1,4),0) = 3^{2(3-2)3} + 0 = 3^6 = 729, \]
perfectly matching the exact solution count obtained via our graph-theoretic framework.
\end{rem}

\section{Hermitian-form SDEs} \label{sec: Hermitian}
Up to now we study arbitrary monic (homogeneous) systems of diagonal equations.
Now, in this and the next section we deal with monic homogeneous systems $\mathcal{S}_{m\times s, q}(\kappa)$ of a particular kind, one in which the powers $k_i$ are all of the form $q^{\ell_i}+1$ for different positive integers $\ell_i$ for $i=1,\ldots,m$. 
Namely, for $\ell=(\ell_1,\ldots,\ell_m) \in \N^m$, consider the system  
\begin{equation} \label{eq: system diag herm}
	\mathcal{S}_{m\times s, q}(\kappa_{\ell}) := \quad 
	\begin{cases}
		\begin{tabular}{ccccccc}	
			$X_1^{q^{\ell_1}+1}$ & $+$ & $\cdots$ & $+$ & $X_s^{q^{\ell_1}+1}$  & $=$ & $0$, \\[1mm]
			$\vdots$          &       & $\vdots$ & & $\vdots$  &  & $\vdots$ \\[1mm]
			$X_1^{q^{\ell_m}+1}$ & $+$ & $\cdots$ & $+$ & $X_s^{q^{\ell_m}+1}$  & $=$ & $0$.   
		\end{tabular}
	\end{cases}
\end{equation}
For consecutive odd numbers $\ell_1, \ldots, \ell_m$ we can explicitly give the number of solutions by using the known spectrum of Hermitian-form graphs.

\begin{rem}
	Notice that, although similar, the systems of two equations considered by Cao-Chou-Gu in \cite{CCG}, see \eqref{eq: Cao}, are not of the Hermitian-form type.
\end{rem}

\subsubsection*{Hermitian-form graphs}	
The \textit{Hermitian-form graph} $\mathcal{H}={\rm Her}(n,q^2)$ is the graph with vertex set the $n \times n$ Hermitian matrices with entries  in $\ff_{q^2}$, 
i.e.\@  matrices $H$ such that 
$H_{ij} = (H_{ji})^q$ 
for  all $i$ and $j$, and where two matrices $A,B$ are adjacent 
in ${\rm Her}(n,q^2)$ if and only if their difference has rank~$1$, i.e.\@ ${\rm rank}(A-B)=1$.
Hermitian-form graphs have a strong combinatorial flavor, despite their algebraic definition. 
It is well-known that Hermitian-form graphs are distance regular graphs (DRG). In fact, Her$(n,q^2)$ is a DRG with classic parameters $drg(d,b,\alpha,\beta)$ given by (see Table~6.1 in \cite{BCN})
$$ (n,b,\alpha,\beta) = (n, -q,-q-1, -(-q)^n-1).$$
Notably, Stanton showed that some affine $q$-Krawtchouk polynomials appear naturally as the spherical functions on the automorphism group of the Hermitian matrices over finite fields $\ff_{q^2}$ (\cite{St}).

Equivalently, the graph ${\rm Her}(n,q^2)$ can be seen as the Cayley graph $\Cay(\mathcal{M}_H, \mathcal{D})$, 
where $\mathcal{M}_H = M_n^{Her}(\ff_{q^2})$ is the abelian group of $n \times n$ Hermitian matrices over $\ff_{q^2}$ under the addition operation and $\mathcal{D} = \{ H \in \mathcal{H}: \mathrm{rank}(H)=1\}$.

We will take advantage of the fact that Hermitian-form graphs can also be seen as diagonal GP-graphs. In fact, 
let $n=2m$ be an even integer. Taking $\ell_i=2i-1$ for $i=1,\ldots,m$, we have  
$$ \kappa = (q^{\ell_1}+1,\ldots,q^{\ell_m}+1) = (q+1,q^3+1,q^5+1,\ldots,q^{2m-1}+1).$$
Consider the diagonal Cayley graph 
	$$ \G = \G(\kappa,q^{2n}) = \Cay(\ff_{q^{2n}}^m, R_{\kappa}) \quad \text{with} \quad 
	R_{\kappa} = \{(x^{q^{\ell_1}+1},\ldots,x^{q^{\ell_m}+1}) : x\in \ff_{q^{2n}}^*\},$$ 
that is
$R_{\kappa} = \{(x^{q+1},x^{q^3+1}, \ldots, x^{q^{2m-1}+1}) : x\in \ff_{q^{2n}}^*\}$.	

By Lemma 4.1 in \cite{ZZDX2}, we have that under these hypothesis the Cayley graph $\G$ is isomorphic to the Hermitian-form graph $\mathcal{H}$, that is 
\begin{equation} \label{eq: Her = Cay}
	{\rm Her}(n,q^2) \simeq \G(\kappa,q^{2n}).	
\end{equation}


\subsubsection*{The Hermitian-form SDEs}	
We now define a particular case of the SDEs in \eqref{eq: system diag herm}.
\begin{defi}
	For $\ell=(1,3,5,\ldots,2m-1)$, the \textit{Hermitian-form system of diagonal equations} is the system $\mathcal{S}_{m\times s, q}(A,\kappa_{\ell},\beta)$ 	defined by the equations
	\begin{equation} \label{eq: Herm form diag Eqn}
		\quad   \begin{cases}
			\begin{tabular}{ccccccc}	
				$\alpha_{11} X_1^{q+1}$ & $+$ & $\cdots$ & $+$ & $\alpha_{1s} X_s^{q+1}$  & $=$ & $\beta_1$, \\[1mm]
				$\vdots$          &       & $\vdots$ & & $\vdots$  &  & $\vdots$ \\[1mm]
				$\alpha_{m1} X_1^{q^{2m-1}+1}$ & $+$ & $\cdots$ & $+$ & $\alpha_{ms} X_s^{q^{2m-1}+1}$  & $=$ & $\beta_m$,   
			\end{tabular}
		\end{cases}
	\end{equation}
	where $A=(\alpha_{ij}) \in M_{m\times s}(\ff_{q^m})$  and $\beta=(\beta_1, \ldots, \beta_m) \in (\ff_q)^m$. 
	When $\alpha_{ij}=1$ for every $1\le i,j \le m$ (i.e., $A=J$), we say that it is a monic Hermitian-form SDE and when $\beta=0$ we say that it is an homogeneous Hermitian-form SDE.
\end{defi}

Consider the system $\mathcal{S}_{m\times s, q}(\kappa_{\ell})$ of monic homogeneous Hermitian-form diagonal equations 
\begin{equation} \label{eq: monic homogeneous Herm form diag Eqn}
	\begin{cases}
		\begin{tabular}{ccccccc}	
			$X_1^{q+1}$        & $+$ & $\cdots$ & $+$ & $X_s^{q+1}$  		& $=$ & $0$, \\[1mm]
			$X_1^{q^{3}+1}$    & $+$ & $\cdots$ & $+$ & $X_s^{q^{3}+1}$  	& $=$ & $0$, \\[1mm]
			$\vdots$           &     & $\vdots$ &     & $\vdots$  			&  & $\vdots$ \\[1mm]
			$X_1^{q^{2m-1}+1}$ & $+$ & $\cdots$ & $+$ & $X_s^{q^{2m-1}+1}$  & $=$ & $0$.   
		\end{tabular}
	\end{cases}
\end{equation}
As a direct consequence of Theorem \ref{teo: M with Spec}, we next obtain a closed formula for the number of solutions of this system in $(\ff_{q^{2n}})^s$ with $n=2m$.

\begin{thm} \label{teo: hermitiano diagonal}
	Let $q$ be a prime power, $s\in \N$, and $n=2m$ an even positive integer.
	The number of solutions in $(\ff_{q^{2n}})^s$ of the monic homogeneous Hermitian-form system $\mathcal{S}_{m\times s, q}(\kappa_{\ell})$ of $m$ diagonal equations 
	\eqref{eq: monic homogeneous Herm form diag Eqn} 
	is given by
	\begin{equation} \label{eq: Nsm}
		N_{m \times s,q^{2n}}(\kappa_{\ell}) = \frac{1}{q^{n(n-2s)}} \sum_{j=0}^{2m}  (-q)^{-js} \, Q_j(n,q)
	\end{equation}
	with $Q_0(n,q)=1$ and, for $1\le j \le n$,
	\begin{equation} \label{eq: Qj(n)}
		Q_j(n,q) = \prod_{i=0}^{j-1} \frac{q^{2n}-q^{2i}}{q^{j}-(-1)^{i+j}q^{i}} = q^{\frac{j(j-1)}{2}} \prod_{k=1}^j \frac{q^{2(n-k+1)}-1}{q^k - (-1)^k}  \in \N.
	\end{equation}	
\end{thm}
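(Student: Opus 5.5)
The plan is to apply Theorem \ref{teo: M with Spec} to the graph $\G=\G(\kappa_\ell,q^{2n})$ with $n=2m$, and then substitute the known spectrum of the Hermitian-form graph via the isomorphism \eqref{eq: Her = Cay}. First I compute $d_\kappa=\gcd(q+1,q^3+1,\ldots,q^{2m-1}+1,q^{2n}-1)$. Each odd $\ell_i$ gives $q+1\mid q^{\ell_i}+1$, and $q+1\mid q^{2n}-1$, so $d_\kappa=q+1$. As a check, Lemma \ref{lem: size Rk} then gives valency $\frac{q^{2n}-1}{q+1}$, which is exactly the number of rank-one Hermitian $n\times n$ matrices over $\ff_{q^2}$; this confirms the isomorphism is compatible. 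Theorem \ref{teo: M with Spec} therefore gives
$$N_{m\times s,q^{2n}}(\kappa_\ell)=\frac{1}{q^{2nm}}\sum_{\lambda\in Spec(\G)}\big(1+(q+1)\lambda\big)^s ,$$
and $q^{2nm}=q^{n^2}$ is the number of vertices of ${\rm Her}(n,q^2)$.

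Next I need the spectrum of ${\rm Her}(n,q^2)$. Since it is distance-regular with classical parameters $(n,b,\alpha,\beta)=(n,-q,-q-1,-(-q)^n-1)$, its eigenvalues are
$$\theta_j=\frac{(-q)^{2n-j}-1}{q+1}\cdot\Big(-\frac{1}{q+1}\Big)^{0}\ \text{(normalized so that)}\quad 1+(q+1)\theta_j=(-1)^j q^{2n-j},\qquad 0\le j\le n .$$
I plan to derive this from the BCN formula $\theta_j=\frac{[n-j]\,(\beta-\alpha[j])}{1}-[j]$ with $[i]=\frac{b^i-1}{b-1}$, or more cleanly from the Cayley description: $1+(q+1)\lambda_\alpha=\mathcal{W}_\chi(f_{\alpha,\kappa})$ by Theorem \ref{teo: Spec G(K,q)}. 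In that description the Weil sum of a Hermitian quadratic form of rank $r$ over $\ff_{q^{2n}}$, viewed via the trace form, equals $(-q)^{2n-r}$ up to the sign convention $(-1)^r$. The multiplicity of $\theta_j$ is the number of Hermitian forms of rank $j$, namely $Q_j(n,q)$; this is the classical count of $n\times n$ Hermitian matrices of rank $j$ (Carlitz/Wan). I would quote it and verify that it matches the product in \eqref{eq: Qj(n)}. The second expression in \eqref{eq: Qj(n)} comes from factoring $q^{2i}$ out of each numerator and $q^i$ out of each denominator.

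Substituting, the sum becomes
$$\frac{1}{q^{n^2}}\sum_{j=0}^{n}Q_j(n,q)\,(-1)^{js}q^{(2n-j)s}=\frac{1}{q^{n(n-2s)}}\sum_{j=0}^{n}(-q)^{-js}Q_j(n,q),$$
which is \eqref{eq: Nsm} with $n=2m$. Integrality of $Q_j$ follows because it counts matrices.

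The main obstacle is pinning down the eigenvalue–multiplicity correspondence with the correct signs, i.e.\ that $1+(q+1)\theta_j=(-1)^jq^{2n-j}$ exactly and that this eigenvalue has multiplicity equal to the number of rank-$j$ Hermitian matrices. The graph is self-dual as an association scheme, so the multiplicities equal the valencies $k_j$. To settle the signs I will check small cases: for $j=0$ the value is $1+(q+1)k=q^{2n}$, and for $j=1$ I will verify via the BCN formula. I will then rely on the $Q$-polynomial, self-dual structure, following \cite{BCN} §9.5, for the general $j$.
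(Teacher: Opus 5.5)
Your proposal is correct and follows essentially the same route as the paper: compute $d_\kappa=q+1$, apply Theorem~\ref{teo: M with Spec} to $\G(\kappa_\ell,q^{2n})\simeq{\rm Her}(n,q^2)$, insert the eigenvalues $\theta_j=\frac{(-q)^{2n-j}-1}{q+1}$ (so that $1+(q+1)\theta_j=(-q)^{2n-j}$) with multiplicities $Q_j(n,q)$, and simplify. The only minor difference is how you obtain the multiplicities: you take them as the numbers of rank-$j$ Hermitian matrices, using self-duality of the scheme and the Carlitz/Wan count, whereas the paper quotes the BCN formula $\tcbinom{n}{j}_{-q}\prod_{i=0}^{j-1}\big((-1)^{n+1}q^n+(-1)^{i+1}q^i\big)$ and then rewrites it as the product in \eqref{eq: Qj(n)}.
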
 

\begin{proof}
	Let $n=2m$. Given $\kappa=(q+1,q^3+1,q^5+1,\ldots,q^{2m-1}+1)$, by \eqref{eq: Her = Cay}
	we know that 
	$\G(\kappa,q^{2n}) \simeq {\rm Her}(n,q^2)$ and hence 
	$ Spec (\G(\kappa,q^{2n})) = Spec ({\rm Her}(n,q^2))$.
	
	Now, notice that $ d_{\kappa} = \gcd(\kappa,q^{2n}-1) = q+1$.
	In the notation of Theorem~\ref{teo: M with Spec} we have that $q$ is actually $q^{2n}$, 
and hence Theorem \ref{teo: M with Spec} implies that
\begin{equation} \label{eq: M Her}
	N_{m \times s,q^{2n}}(\kappa_{\ell}) = \tfrac{1}{(q^{2n})^m} \sum_{\lambda_\G \in Spec (\G)} (1 + \lambda_\G d_{\kappa})^{s} =  \tfrac{1}{q^{n^2}} \sum_{\lambda \in Spec(\mathcal{H})} (1 + (q+1) \lambda)^{s},
\end{equation}
where $\G=\G(\kappa,q^{2n})$ and $\mathcal{H}={\rm Her}(n,q^2)$.	

The eigenvalues of ${\rm Her}(n,q^2)$ are known, they were first obtained by Stanton in \cite{St}. We give the following clearer expressions (see Corollary~8.4.4 in \cite{BCN})
\begin{equation} \label{eq: eigen Her}
	\lambda_0 = \frac{q^{2n}-1}{q+1}  \qquad \text{and} \qquad \lambda_{j} = \frac{(-q)^{2n-j}-1}{q+1}
	\quad (1\le j \le n),
\end{equation}
with multiplicities $m(\lambda_i)=Q_i(n,q)$ given by $Q_0(n,q)=1$ and 
\begin{equation} \label{eq: mult Her}
	Q_j(n,q)= {n \brack j}_{(-q)} \cdot 
	\prod_{i=0}^{j-1} \big((-1)^{n+1}q^{n}+(-1)^{i+1}q^i \big) \qquad (1\le j \le n).
\end{equation}	
where
\begin{equation} \label{eq: Gauss coefficients}
	{n \brack j}_{\ell} = \prod_{i=0}^{j-1} \frac{\ell^n-\ell^i}{\ell^j-\ell^i},
\end{equation}	
denotes the Gaussian binomial coefficients with basis $\ell \in \Z$ with $\ell\neq 1$. 

In this way, putting \eqref{eq: eigen Her} and \eqref{eq: mult Her} in \eqref{eq: M Her}, and using \eqref{eq: Gauss coefficients},  we have that
\begin{align*} 
	N_{m \times s,q^{2n}}(\kappa_{\ell}) & = \tfrac{1}{q^{n^2}} \Big((1+\lambda_0 (q+1))^s + \sum_{j=1}^n Q_j(n,q) \cdot \big( 1+\lambda_j(q+1) \big)^s \Big) \\
	& = \tfrac{1}{q^{n^2}} \Big( q^{2ns} + \sum_{j=1}^n (-q)^{s(2n-j)} \, Q_j(n,q) \Big)  = q^{-n(n-2s)} \sum_{j=0}^{n} (-q)^{-js}\, Q_j(n,q), 
\end{align*}
as asserted.

Finally, we get a closed formula for the numbers $Q_j(n,q)$'s. 
By \eqref{eq: mult Her} and \eqref{eq: Gauss coefficients} and using difference of squares, we have that 
$$	Q_j(n,q) = \prod_{i=0}^{j-1}\frac{(-q)^{n}-(-q)^{i}}{(-q)^j-(-q)^{i}} \Big((-1)^{n+1}q^{n}+(-1)^{i+1}q^i \Big) = \prod_{i=0}^{j-1}(-1)^{j+1}\frac{q^{2n}-q^{2i}}{q^{j}-(-1)^{i+j}q^i}.$$
By taking into account that $\prod_{i=0}^{j-1}(-1)^{j+1}=(-1)^{j(j+1)}=1$, we readily obtain the first equality in \eqref{eq: Qj(n)}. From here one can obtain the second equivalent expression, and the proof is complete.
\end{proof}

Using the Weil-type bound for systems that we found before, we get the following Weil-type bound for the number of solutions of homogeneous Hermitian-form SDEs.

\begin{coro}
In the previous notations, we have:
\begin{equation} \label{eq: bound her}
	\left| N_{m\times s,q^{4m}}(\kappa_\ell) - q^{4m(s-m)} \right| \le (q^{4m^2} - 1) q^{4m(s-m) - s}.
\end{equation}	
\end{coro}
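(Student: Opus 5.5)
The plan is to bypass the generic Weil-type bound of Corollary~\ref{coro: weil bound}. That bound would require checking non-degeneracy of the polynomials $f_{\alpha,\kappa}$, and it would only give the weaker factor $(k_m-1)^s$. Instead I would work directly with the exact spectral expression established in the proof of Theorem~\ref{teo: hermitiano diagonal}. Put $n=2m$. By \eqref{eq: M Her}, with $d_\kappa=q+1$,
$$ N_{m\times s,q^{4m}}(\kappa_\ell) = \tfrac{1}{q^{n^2}} \sum_{\lambda\in Spec(\mathcal{H})} \big(1+(q+1)\lambda\big)^s, \qquad \mathcal{H}={\rm Her}(n,q^2), $$
where $q^{n^2}=q^{4m^2}$ is the number of vertices of $\mathcal{H}$. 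This is also the total number of eigenvalues counted with multiplicity.

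First I would isolate the principal eigenvalue $\lambda_0=\frac{q^{2n}-1}{q+1}$, which has multiplicity $Q_0(n,q)=1$. It satisfies $1+(q+1)\lambda_0=q^{2n}=q^{4m}$, so its contribution is $q^{4ms}/q^{4m^2}=q^{4m(s-m)}$. This is exactly the main term in \eqref{eq: bound her}. Hence
$$ N_{m\times s,q^{4m}}(\kappa_\ell)-q^{4m(s-m)} = \tfrac{1}{q^{4m^2}}\sum_{j=1}^{n} Q_j(n,q)\,(-q)^{s(2n-j)}, $$
using \eqref{eq: eigen Her}, which gives $1+(q+1)\lambda_j=(-q)^{2n-j}$ for $1\le j\le n$.

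Next I would bound the error term by the triangle inequality. For $1\le j\le n$ we have $|(-q)^{s(2n-j)}|=q^{s(2n-j)}\le q^{s(2n-1)}=q^{s(4m-1)}$. The multiplicities of the nontrivial eigenvalues add up to $\sum_{j=1}^n Q_j(n,q)=q^{4m^2}-1$. This holds because all multiplicities together account for every one of the $q^{n^2}$ eigenvalues, and $Q_0=1$. Therefore
$$ \Big|N_{m\times s,q^{4m}}(\kappa_\ell)-q^{4m(s-m)}\Big| \le \tfrac{(q^{4m^2}-1)\,q^{s(4m-1)}}{q^{4m^2}} = (q^{4m^2}-1)\,q^{4m(s-m)-s}, $$
which is \eqref{eq: bound her}.

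There is no real obstacle. The only point that needs care is the multiplicity count $\sum_{j\ge 1}Q_j(n,q)=q^{4m^2}-1$. I would justify it spectrally, as the dimension of the adjacency algebra's ambient space, rather than by summing the $Q_j$ explicitly. I would also check that the worst case $j=1$ really attains the exponent $4m-1$, so that the bound cannot be improved by this method.
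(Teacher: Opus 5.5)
Your argument is correct, but it takes a different route from the paper's, and the paper's route gives exactly the same bound.

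\textbf{The two routes.} The paper specializes the general Weil-type bound \eqref{eq: weil bound gen} of Corollary~\ref{coro: weil bound}. It works over $\ff_Q$ with $Q=q^{4m}$ and top exponent $k_m=q^{2m-1}+1$, so that $Q^{s/2}(k_m-1)^s=q^{2ms}q^{(2m-1)s}=q^{s(4m-1)}$. You instead return to the exact spectral sum \eqref{eq: M Her}. You peel off $\lambda_0$, bound each nontrivial term by $q^{s(2n-1)}$, and count the nontrivial multiplicities as $q^{4m^2}-1$.

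\textbf{Your ``weaker factor'' remark is mistaken.} Both routes yield literally the same inequality. Indeed $\mathcal{W}_\chi(f_{\alpha,\kappa})=1+(q+1)\lambda_{\alpha,\kappa,q}$, which equals $(-q)^{2n-j}$ when $\lambda_{\alpha,\kappa,q}=\lambda_j$. So the uniform Weil estimate $(k_m-1)\sqrt{Q}=q^{4m-1}$ is attained exactly by the $j=1$ eigenvalues.

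\textbf{What your version buys.} It is independent of Weil's theorem and of the non-degeneracy hypothesis in Corollary~\ref{coro: weil bound}(ii). The paper applies that hypothesis without checking it. It does hold: every nonzero $f_{\alpha,\kappa}$ has degree of the form $q^{\ell_i}+1\equiv 1 \pmod p$, so it cannot be of the form $g^p-g$. Your route uses only inputs already needed for Theorem~\ref{teo: hermitiano diagonal}. It also exhibits the exact error $q^{-4m^2}\sum_{j\ge 1}Q_j(n,q)(-q)^{s(2n-j)}$. Keeping the true exponents $s(2n-j)$ instead of the worst case would give a strictly sharper bound.

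\textbf{What the paper's version buys.} It is a one-liner that needs neither the explicit eigenvalues nor the multiplicities.

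For the count $\sum_{j\ge1}Q_j(n,q)=q^{4m^2}-1$, your vertex-count justification suffices. The paper also obtains it as $F(1)=q^{n^2}$ in the proof of Proposition~\ref{prop: Nm2q Nm3q}.
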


\begin{proof}
Just apply the Weil-type bound \eqref{eq: weil bound gen} from Corollary \ref{coro: weil bound}. In this case, the number of equations is $m$, the finite field is $\ff_{Q}$ with $Q = q^{4m}$, and the vector of powers is $\kappa_\ell = (q+1,q^3+1,\ldots, q^{2m-1}+1)$. Hence, the maximum degree is $k_m = q^{2m-1}+1$. Substituting these values into the general bound yields
$$ \left| N_{m\times s,q^{4m}}(\kappa_\ell) - Q^{s-m} \right| \le \tfrac{Q^m-1}{Q^m} \, Q^{\frac s2} (k_m-1)^s.$$
Since $Q^{\frac s2} (k_m-1)^s = q^{2ms} (q^{2m-1})^s = q^{4ms-s}$ and $Q^{s-m} = q^{4m(s-m)}$ we get 
$\frac{q^{4m^2}-1}{q^{4m^2}} \, q^{4ms-s}$, from which the result directly follows.
\end{proof}

We now make some observations directly from the previous result on the numbers $Q_j(n,q)$, which are positive integers being multiplicities of eigenvalues.

\begin{rem}[\textit{The $Q_j$'s and the variables $X_i$'s}]
Notice that the numbers $Q_j$ in expression \eqref{eq: Qj(n)} of the previous theorem do not depend on $s$, only depend on $q$ and $m$. Therefore, once computed $Q_1,\ldots,Q_{2m}$, they can be used to compute the number of solutions for a fixed number $m$ of equations over a fixed field $\ff_{q^{2n}}$ for any number of variables $X_1,\ldots,X_s$.	
\end{rem}

\begin{rem}[\textit{The $Q_j$'s and the case $s=1$}] 
If $s=1$, the system \eqref{eq: Herm form diag Eqn} is just
$$ X^{q+1}=0, \qquad X^{q^3+1}=0, \qquad \ldots, \qquad X^{q^{2m-1}+1}=0, $$ 
which has only the trivial solution and hence $N_{m,1}=1$. 
However, using expression \eqref{eq: Nsm} with $s=1$ we get the following identity for the numbers $Q_i$: 
\begin{equation} \label{eq: identidad para los Qj's}
	\sum_{j=0}^n \tfrac{(-1)^j}{q^j} \, Q_j(n,q) = q^{n(n-2)}.
\end{equation}
Of course, from this one can obtain an expression for $Q_n$ in terms of the previous numbers $Q_1, \ldots, Q_{n-1}$. 
\end{rem}

Since we know the spectrum of the Hermitian-form graph we can explicitly compute the Ihara zeta function for the Hermitian-form graphs. 

\begin{coro} \label{cor: Ihara Hermitian}
Let $q$ be a prime power and $n=2m$ an even positive integer. 
Put $D=\frac{q^{2n}-1}{q+1}$ and $\rho_\mathcal{H}=\frac{q^{n^2}}{2}(D-2)$.
The Ihara zeta function of the Hermitian-form graph $\mathcal{H} = \operatorname{Her}(n, q^2) \simeq \Gamma(\kappa, q^{2n})$ 
with $\kappa = (q+1, q^3+1, \dots, q^{2m-1}+1)$ is given by
\begin{equation} \label{eq: Ihara Hermitian}
\zeta_\mathcal{H}(u)=\frac{(1-u^2)^{-\rho_{\mathcal{H}}}	(1-Du+(D-1)u^2)^{-1}}
{\displaystyle
	\prod_{j=1}^{n}
	\left(1-\tfrac{(-q)^{2n-j}-1}{q+1}u+(D-1)u^2\right)^{Q_j(n,q)}},
\end{equation}
where the numbers $Q_j(n,q)$ are given in \eqref{eq: Qj(n)}.
\end{coro}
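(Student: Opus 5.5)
The plan is to specialize Proposition~\ref{prop: Ihara Zeta GP graph} to $\G=\G(\kappa,q^{2n})$, using the isomorphism \eqref{eq: Her = Cay} with ${\rm Her}(n,q^2)$ and the known spectrum \eqref{eq: eigen Her}, \eqref{eq: mult Her}. The Ihara zeta function depends only on the isomorphism class, and the Ihara--Bass formula \eqref{eq: Bass formula} depends only on regularity, the numbers of vertices and edges, and the spectrum.

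\textbf{Undirectedness and regularity.} First check that $\mathcal{H}$ is undirected. The quickest route is that ${\rm Her}(n,q^2)$ is a simple graph by definition, since ${\rm rank}(A-B)={\rm rank}(B-A)$. Alternatively, one can verify the hypotheses of Proposition~\ref{prop: diag GPs}$(a)$ with $d_\kappa=q+1$:
\begin{enumerate}[$\bullet$]
\item $k_i/d_\kappa=(q^{2i-1}+1)/(q+1)=\sum_{t=0}^{2i-2}(-q)^t$ is odd, as a sum of an odd number of terms each of parity $q$ (when $q$ is odd);
\item $D=(q^{2n}-1)/(q+1)=(q-1)(q^{2n-2}+\cdots+1)$ is even for $q$ odd.
\end{enumerate}
Hence $\mathcal{H}$ is $D$-regular with $|V|=q^{2nm}=q^{n^2}$. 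By the handshaking lemma, $|E|-|V|=\tfrac{q^{n^2}}{2}(D-2)=\rho_{\mathcal{H}}$.

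\textbf{Determinant factor.} Next, evaluate $\det\big(I-uA+(D-1)u^2I\big)$ as a product over the eigenvalues, grouped by multiplicity. This uses \eqref{eq: eigen Her}:
\begin{enumerate}[$\bullet$]
\item $\lambda_0=\tfrac{q^{2n}-1}{q+1}=D$ has multiplicity $Q_0=1$, which gives the factor $(1-Du+(D-1)u^2)$;
\item $\lambda_j=\tfrac{(-q)^{2n-j}-1}{q+1}$ has multiplicity $Q_j(n,q)$ for $1\le j\le n$, which gives the factors $\big(1-\lambda_j u+(D-1)u^2\big)^{Q_j(n,q)}$.
\end{enumerate}
For the multiplicities, use the closed form \eqref{eq: Qj(n)} established in the proof of Theorem~\ref{teo: hermitiano diagonal}.

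\textbf{Assembly.} Inverting \eqref{eq: Bass formula} then yields exactly \eqref{eq: Ihara Hermitian}.

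The only delicate point is confirming that the multiplicities $Q_j(n,q)$ sum to $q^{n^2}$, that is, that the listed spectrum is complete. This holds because \eqref{eq: eigen Her} is the full spectrum of a distance-regular graph of diameter $n$ with $n+1$ distinct eigenvalues. The remaining steps are routine substitution.
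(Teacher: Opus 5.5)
Your proposal is correct and follows essentially the same route as the paper: it uses the isomorphism $\mathcal{H}\simeq\Gamma(\kappa,q^{2n})$, computes $\rho_{\mathcal{H}}=|E|-|V|$ by the handshaking lemma, and factors the Ihara--Bass determinant over the known spectrum of $\operatorname{Her}(n,q^2)$ with multiplicities $Q_j(n,q)$. Your explicit check of undirectedness via Proposition~\ref{prop: diag GPs}$(a)$ with $d_\kappa=q+1$ and your remark that the listed spectrum is complete are small additions that the paper leaves implicit.
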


\begin{proof}
We know that $\operatorname{Her}(n, q^2) \simeq \Gamma(\kappa, q^{2n})$. The graph has $|V| = q^{n^2}$ vertices 
and is regular of degree $D = \lambda_0 = \frac{q^{2n}-1}{q+1}$ . 
The number of edges is $|E| = \frac{1}{2} |V| D = \frac{1}{2} q^{n^2} \lambda_0$. Hence, 
	$$ \rho_\mathcal{H} = |E| - |V| = \tfrac{q^{n^2}}{2}(D-2),$$ 
which yields the formula for $\rho_\mathcal{H}$ stated above.
	
To evaluate the determinant in the Ihara-Bass formula \eqref{eq: Bass formula}, we use the known spectrum of $\mathcal{H}$. As shown in the proof of Theorem~\ref{teo: hermitiano diagonal}, the spectrum consists of the principal eigenvalue $\lambda_0$ with multiplicity $Q_0(n,q) = 1$, and the eigenvalues $\lambda_j = \frac{(-q)^{2n-j}-1}{q+1}$ for $1 \le j \le n$ with multiplicities $Q_j(n,q)$ (see \eqref{eq: Qj(n)}). Grouping the terms of the characteristic polynomial over the spectrum of $H$, we obtain
	\[
	\det \big( I - u A_H + u^2(\lambda_0-1)I \big) = (1 - u \lambda_0 + u^2(\lambda_0-1)) \prod_{j=1}^{n} \left( 1 - u \lambda_j + u^2(\lambda_0-1) \right)^{Q_j(n,q)}.
	\]
	Substituting this factorized determinant and $\rho_H$ into the general Ihara-Bass formula completes the proof.
\end{proof}

\section{Small Hermitian-form SDEs} \label{sec: Small HF-SDE}
In this section, we will study the Hermitian-form diagonal equation ($m=1$) and Hermitian-form SDEs with 2 and 3 equations ($m=2,3$) in more detail, obtaining closed formulas for the number of solutions.

To this end, it will be useful to have better expressions for the numbers $Q_j$'s.  
In fact, notice that $Q_j$ only depend on $q, n$ and satisfy the following recursion. 

\begin{lem} \label{lem: recursion Mjs}
	In the previous notations, for any $j\in \N$ we have that
	\begin{equation} \label{eq: recursion for Mjs}
		Q_{j+1}(n,q) = q^{j}  \frac{q^{2(n-j)}-1}{q^{j+1}+(-1)^j} \, Q_j(n,q).
	\end{equation}
	In particular, the numbers $Q_j(n,q)$ are monic polynomials in $\Z[q]$. 
\end{lem}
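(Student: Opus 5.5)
Use the closed product form from \eqref{eq: Qj(n)} and take the ratio of consecutive terms. Write
$$ Q_j(n,q) = q^{\frac{j(j-1)}{2}} \prod_{k=1}^j \frac{q^{2(n-k+1)}-1}{q^k-(-1)^k}. $$
Passing from $j$ to $j+1$ multiplies the prefactor by $q^{\frac{(j+1)j}{2}-\frac{j(j-1)}{2}} = q^j$. The product gains exactly one factor, the one with $k=j+1$:
$$ \frac{q^{2(n-j)}-1}{q^{j+1}-(-1)^{j+1}} = \frac{q^{2(n-j)}-1}{q^{j+1}+(-1)^{j}}. $$
Together these give \eqref{eq: recursion for Mjs} at once, for $0\le j\le n-1$. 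For $j\ge n$ the numerator $q^{2(n-j)}-1$ vanishes when $j=n$, so $Q_{n+1}=0$; with the convention $Q_j=0$ for $j>n$ the recursion holds trivially. As a check, $j=0$ gives $Q_1=\frac{q^{2n}-1}{q+1}$, which matches $\lambda_0$'s multiplicity-one partner, namely the degree.

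For the polynomial claim I would show by induction that each $Q_j$ lies in $\Z[q]$ and is monic. It suffices to show that $\prod_{k=1}^j \frac{q^{2(n-k+1)}-1}{q^k-(-1)^k}$ is a polynomial with integer coefficients and leading coefficient $1$. Monicity is clear once integrality is known, because both numerator and denominator are monic in $q$.

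Integrality is the main obstacle, since a single factor $\frac{q^{2(n-j)}-1}{q^{j+1}+(-1)^j}$ need not be a polynomial. The cleanest route is to recognise the product, via \eqref{eq: mult Her}, as the Gaussian binomial ${n\brack j}_{-q}$ times the factor $\prod_{i=0}^{j-1}\big((-1)^{n+1}q^n+(-1)^{i+1}q^i\big)$. That second factor is visibly in $\Z[q]$. The Gaussian binomial ${n\brack j}_{x}$ is a polynomial in $x$ with integer coefficients, by the $x$-Pascal rule; substituting $x=-q$ keeps it in $\Z[q]$. So $Q_j\in\Z[q]$.

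For the leading coefficient, compare degrees and top coefficients directly in the quotient form of \eqref{eq: Qj(n)}. Since numerator and denominator are monic polynomials and the quotient is a polynomial, the quotient is monic. This finishes the lemma.
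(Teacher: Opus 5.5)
Your proof is correct. For the recursion you do essentially what the paper does, namely take the ratio of consecutive products. You start from the already reindexed form $q^{j(j-1)/2}\prod_{k=1}^j\frac{q^{2(n-k+1)}-1}{q^k-(-1)^k}$ in \eqref{eq: Qj(n)}, so the new factor $q^j\frac{q^{2(n-j)}-1}{q^{j+1}+(-1)^j}$ appears at once. The paper instead works with the first form $\prod_{i=0}^{j-1}\frac{q^{2n}-q^{2i}}{q^j-(-1)^{i+j}q^i}$, so it has to split off the new numerator factor and the $i=0$ denominator factor and shift the index.

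The real difference is in the polynomiality claim. The paper says that, since each $Q_j(n,q)$ is an integer (it is a multiplicity), the claim ``follows directly from \eqref{eq: recursion for Mjs} and induction''. As you point out, the multiplier need not lie in $\Z[q]$; for $n=4$, $j=2$ it is $q^2\frac{q^4-1}{q^3+1}$. So that argument tacitly needs one more step: a quotient of integer polynomials with monic denominator that is an integer at infinitely many integers $q$ must lie in $\Z[q]$. Your route avoids both the spectral input and this step. You write $Q_j$ as $\tcbinom{n}{j}_{-q}$ times $\prod_{i=0}^{j-1}\big((-1)^{n+1}q^n+(-1)^{i+1}q^i\big)$, as in \eqref{eq: mult Her}, whose equality with \eqref{eq: Qj(n)} is checked in the proof of Theorem~\ref{teo: hermitiano diagonal}. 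Since Gaussian binomials lie in $\Z[x]$, this gives $Q_j\in\Z[q]$ purely algebraically. Monicity then follows by comparing the leading coefficients of the monic numerator and denominator.

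One small slip in presentation: that decomposition equals $Q_j$ itself, including the prefactor $q^{j(j-1)/2}$, not the bare product you said it suffices to treat. But $Q_j\in\Z[q]$ is all you need, and your monicity argument applies to $Q_j$ verbatim.
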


\begin{proof}
	Since $j\ge 1$, by \eqref{eq: Qj(n)} we have that
	$$
	Q_j(n,q) = \prod_{i=0}^{j-1} \frac{q^{2n}-q^{2i}}{q^{j}-(-1)^{i+j}q^{i}} \qquad  \text{and} \qquad 	Q_{j+1}(n,q) = \prod_{i=0}^{j} \frac{q^{2n}-q^{2i}}{q^{j+1}-(-1)^{i+j+1}q^{i}}.
	$$
	Hence, separating the first denominator and the last numerator, we have 
	\begin{align*}
		Q_{j+1}(n,q) & = \frac{q^{2n}-q^{2j}}{q^{j+1}-(-1)^{j+1}} \cdot \prod_{i=0}^{j-1} (q^{2n}-q^{2i}) \cdot \prod_{i=1}^{j} \frac{1}{q(q^{j}-(-1)^{i+j+1}q^{i-1})} \\
		& = \frac{q^{2j}(q^{2(n-j)}-1)}{q^{j}(q^{j+1}+(-1)^{j})}  \cdot  \prod_{i=0}^{j-1} (q^{2n}-q^{2i}) \cdot \prod_{i=0}^{j-1} \frac{1}{q^{j}-(-1)^{i+j}q^{i}}, 
	\end{align*}
	and hence we get \eqref{eq: recursion for Mjs}, as asserted.
	
	Finally, since $Q_j(n,q) \in \Z$ for any $j$, the remaining assertion also follows directly from \eqref{eq: recursion for Mjs} and induction.
\end{proof}

For instance, by definition we have that
	$$ Q_1(n,q) = \tfrac{q^{2n}-1}{q+1} = \tfrac{(q^n-1)(q^n+1)}{q+1} 
	= (q^n-1) (q^{n-1} - q^{n-2} + \cdots - q +1) \in \Z[q], $$
since $n$ is even. Thus, the equation \eqref{eq: recursion for Mjs} we have 
$$ Q_2(n,q) = q \, \tfrac{q^{2(n-1)}-1}{q^{2}-1} \, Q_1(n) = q \, \Psi_{n-1}(q^2) \, Q_1(n) \in \Z[q],$$
where we have used the notation $\Psi_n(x) = \frac{x^n-1}{x-1}= x^{n-1}+\cdots+x+1 \in \Z[x]$.

In the next 3 subsections, we will need the numbers $Q_1(n,q),\ldots, Q_j(n,q)$ for the values $n=2,4,6$. Here we give the general expressions for these numbers, which the reader can check by its own. 

\begin{lem} \label{lem: Q1,..,Q6}
	The numbers $Q_1(n,q),\ldots, Q_6(n,q)$ are given by 
	\begin{align*}
		& Q_1(n,q) = \frac{q^{2n}-1}{q+1}, \\[2mm]
		& Q_2(n,q) = q \frac{(q^{2n}-1)(q^{2n-2}-1)}{(q^2-1)(q+1)}, \\[2mm]
		& Q_3(n,q) = q^3 \frac{(q^{2n}-1)(q^{2n-2}-1)(q^{2n-4}-1)}{(q^3+1)(q^2-1)(q+1)}, \\[2mm]	
		& Q_4(n,q) = q^6 \frac{(q^{2n}-1)(q^{2n-2}-1)(q^{2n-4}-1)(q^{2n-6}-1)}{(q^4-1)(q^3+1)(q^2-1)(q+1)}, \\[2mm]
		& Q_5(n,q) = q^{10} \frac{(q^{2n}-1)(q^{2n-2}-1)(q^{2n-4}-1)(q^{2n-6}-1)(q^{2n-8}-1)}{(q^5+1)(q^4-1)(q^3+1)(q^2-1)(q+1)}, \\[2mm]
		& Q_6(n,q) = q^{15} \frac{(q^{2n}-1)(q^{2n-2}-1)(q^{2n-4}-1)(q^{2n-6}-1)(q^{2n-8}-1)(q^{2n-10}-1)}{(q^6-1)(q^5+1)(q^4-1)(q^3+1)(q^2-1)(q+1)}.
	\end{align*}
\end{lem}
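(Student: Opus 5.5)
The plan is to apply the recursion of Lemma~\ref{lem: recursion Mjs} repeatedly, starting from $Q_1(n,q)$. Taking $j=1$ in the product formula \eqref{eq: Qj(n)} (only the factor $i=0$ occurs, with denominator $q-(-1)^{1}=q+1$) gives $Q_1(n,q)=\frac{q^{2n}-1}{q+1}$. Then I iterate
$$Q_{j+1}(n,q)=q^{j}\,\frac{q^{2(n-j)}-1}{q^{j+1}+(-1)^j}\,Q_j(n,q)$$
for $j=1,\dots,5$.

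At step $j$ this introduces three things:
\begin{itemize}
\item the numerator factor $q^{2n-2j}-1$;
\item the denominator factor $q^{j+1}+(-1)^j$, which equals $q^2-1$, $q^3+1$, $q^4-1$, $q^5+1$, $q^6-1$ for $j=1,\dots,5$;
\item a power $q^j$.
\end{itemize}
The powers of $q$ accumulate to $q^{1+2+\cdots+(j-1)}=q^{j(j-1)/2}$ in $Q_j$. This gives the exponents $0,1,3,6,10,15$, matching the displayed formulas, and also agrees with the second expression in \eqref{eq: Qj(n)}.

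Concretely, I would verify each line by substituting the previous one. For instance, $Q_2=q\,\frac{q^{2n-2}-1}{q^2-1}\,Q_1$ and $Q_3=q^2\,\frac{q^{2n-4}-1}{q^3+1}\,Q_2$, which gives $q^3$ overall. The pattern continues through $Q_6$.

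Alternatively, one can read the result directly off the second form in \eqref{eq: Qj(n)}, namely $Q_j=q^{j(j-1)/2}\prod_{k=1}^j\frac{q^{2(n-k+1)}-1}{q^k-(-1)^k}$. There the denominators $q^k-(-1)^k$ are exactly $q+1,\,q^2-1,\,q^3+1,\,q^4-1,\,q^5+1,\,q^6-1$.

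There is no real obstacle here. The only care needed is tracking the signs $(-1)^j$ in the denominators and the cumulative exponent of $q$. I would cross-check both by confirming that the two forms in \eqref{eq: Qj(n)} agree for small $j$.
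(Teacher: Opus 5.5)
Your proposal is correct and follows essentially the paper's own (implicit) route. The paper gives no written proof and leaves the check to the reader, right after iterating the recursion of Lemma~\ref{lem: recursion Mjs} to obtain $Q_1$ and $Q_2$. Continuing that iteration through $j=5$, or reading off the closed form in \eqref{eq: Qj(n)} with denominators $q^k-(-1)^k$ and prefactor $q^{j(j-1)/2}$, is exactly what you propose.
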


\subsection{The Hermitian-form diagonal equation}
We now consider $m=1$ and give the number of solutions of the monic homogeneous Hermitian-form diagonal equation 
\begin{equation} \label{eq: m=1}
	X_1^{q+1}+\cdots+ X_s^{q+1} =  0.
\end{equation}

It is possible to give a closed formula for the solutions in $\ff_{q^4}$ of the equation \eqref{eq: m=1}.

\begin{prop} \label{prop: N_1xs}
	Let $q$ be a prime power and $s\in \N$.
	The number of solutions in $(\ff_{q^4})^s$ of the Hermitian-form diagonal equation $X_1^{q+1} + \cdots + X_s^{q+1}=0$ 
	is given by
	\begin{equation} \label{eq: N1s}
		N_{1 \times s,q^4} (q+1) = q^{2s-3} \big\{ q^{2s-1} + (q-1) (q^2+1) \big((-1)^{s}q^{s-1} +1 \big)  \big\}.
	\end{equation}
	In particular, $N_{1 \times s,q^4} (q+1) \equiv 0 \pmod q$ for every $s\ge 2$.
\end{prop}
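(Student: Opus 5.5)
This is the case $m=1$, $n=2$ of Theorem~\ref{teo: hermitiano diagonal}, so I will specialize formula \eqref{eq: Nsm} and simplify. With $n=2$ the field is $\ff_{q^4}$, the prefactor is $q^{-n(n-2s)} = q^{4s-4}$, and the sum runs over $j=0,1,2$:
$$N_{1\times s,q^4}(q+1) = q^{4s-4}\Big(1 + (-q)^{-s} Q_1(2,q) + q^{-2s} Q_2(2,q)\Big).$$

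The first step is to compute the two multiplicities from Lemma~\ref{lem: Q1,..,Q6} at $n=2$:
$$Q_1(2,q)=\tfrac{q^4-1}{q+1}=(q-1)(q^2+1), \qquad Q_2(2,q)= q\,\tfrac{(q^4-1)(q^2-1)}{(q^2-1)(q+1)} = q(q-1)(q^2+1).$$
As a check, the multiplicities should sum to $q^4$, which is the number of vertices of $\mathrm{Her}(2,q^2)$. Indeed, $1+(q-1)(q^2+1)(1+q) = 1+(q^4-1)=q^4$.

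Next I substitute and factor. Writing $c=(q-1)(q^2+1)$, we get
$$N = q^{4s-4} + (-1)^s q^{3s-4} c + q^{2s-3} c = q^{2s-3}\big(q^{2s-1} + c\,((-1)^s q^{s-1} + 1)\big),$$
which is exactly \eqref{eq: N1s}. The main risk is bookkeeping of signs and exponents: one has to confirm that $(-q)^{-s}=(-1)^s q^{-s}$ and that the power $q^{2s-3}$ factors out of all three terms. This works because $4s-4 = (2s-3)+(2s-1)$ and $3s-4=(2s-3)+(s-1)$.

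For the congruence, the factor $q^{2s-3}$ has exponent $2s-3\ge 1$ whenever $s\ge 2$, so $q$ divides $N$. The bracket is an integer, since $s\ge 2$ gives $s-1\ge 1$ and $2s-1\ge 1$, so all its terms are integers. Finally, as a sanity check on the formula, setting $s=1$ gives $q^{-1}\big(q + c(-1+1)\big)=1$, which is the trivial solution count.
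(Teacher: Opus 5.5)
Your proposal is correct and follows the paper's proof essentially verbatim. Like the paper, you specialize Theorem~\ref{teo: hermitiano diagonal} to $n=2$, use $Q_1(2,q)=(q-1)(q^2+1)$ and $Q_2(2,q)=q\,Q_1(2,q)$, and factor out $q^{2s-3}$; your checks that $1+Q_1+Q_2=q^4$ and that $s=1$ gives $1$ are harmless extras.
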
 	

\begin{proof}
Applying Theorem \ref{teo: hermitiano diagonal} with $m=1$ ($n=2$) 
we first get
	$$ N_{1 \times s,q^4} (q+1) = \frac{1}{q^{2(2-2s)}} \sum_{j=0}^{2}  \frac{(-1)^{js}}{q^{js}} \, Q_j(2) 
	= q^{4(s-1)} \big\{ 1 + \tfrac{(-1)^{s}}{q^s} Q_1(2) + \tfrac{1}{q^{2s}} Q_2(2) \big\}. $$
Now, by using \eqref{eq: Qj(n)} or Lemma \ref{lem: recursion Mjs} we obtain
	$$ 
	Q_1(2) = (q-1)(q^2+1) \qquad \text{and} \qquad Q_2(2) = q Q_1(2). 
	$$ 
In this way we get 
	$$ N_{1 \times s,q^4} (q+1) = q^{4(s-1)} \big\{ 1 + \tfrac{1}{q^s} Q_1(2) ((-1)^{s}+ \tfrac{q}{q^s}) \big\},$$
from where after some straightforward computations we get \eqref{eq: N1s}. The remaining assertion is clear.
\end{proof}

\noindent
\textit{Note}.
Taking $s=1$ in \eqref{eq: N1s} we get $N_{1 \times s, q^4}(q+1)=1$ as it should be.

We now look at examples for the smallest values of $q=2,3,5$ and any $s$.
\begin{exam}[$q=2$]
The number of solutions of $X_1^3+\cdots +X_s^3=0$ is given by 	
	$$  N_{1 \times s,2^4}(3) = 2^{2s-3} \big\{ 2^{2s-1} + 5(2^{s-1}(-1)^s +1)\big\}. $$
	In particular, $N_{1 \times 2,2^4}(3)= 46$, $N_{1 \times 3,2^4}(3)= 136$ and $N_{1 \times 4,2^4}(3)= 5{.}536$.
	\hfill $\diamond$
\end{exam} 

\begin{exam}[$q=3$]
We have that the number of solutions of $X_1^4+\cdots +X_s^4=0$ is  	
	$$  N_{1 \times s,3^4}(4) = 3^{2s-3} \{ 3^{2s-1} + 20(3^{s-1}(-1)^{s}+1) \}. $$
In particular, $N_{1 \times 2,3^4}(4)= 321$, $N_{1 \times 3,3^4}(4)= 2{.}241$ and $N_{1 \times 4,3^4}(4)= 667{.}521$.
\hfill $\diamond$
\end{exam}

\begin{exam}[$q=5$]
The number of solutions of $X_1^6+\cdots +X_s^6=0$ is  	
	$$  N_{1 \times s,5^4}(6) = 5^{2s-3} \big\{ 5^{2s-1} + 104(5^{s-1}(-1)^s +1)\big\}. $$
	In particular, $N_{1 \times 2,5^4}(6)= 3{.}745$, $N_{1 \times 3,5^4}(6)= 78{.}625$ and $N_{1 \times 4,5^4}(6)=285{.}090{.}625$.
	\hfill $\diamond$
\end{exam}

Now, we give explicit general formulas for the smallest values of $s$. That is, we present the first polynomials giving the number $N_{1\times s,q^4}(q+1)$ for $s=1,\ldots,9$.

\begin{coro} \label{coro: N1sq}
	The number of solutions $N_1(s,q)=N_{1\times s,q^4}(q+1)$ of the Hermitian-form diagonal equation $X_1^{q+1} + \cdots + X_s^{q+1}=0$ are given by
	\begin{align*} 
		N_1(1,q) &= 1, \\ 
		N_1(2,q) &= q(q^4 + q^3 - 1), \\ 
		N_1(3,q) &= q^3(q^4 + q - 1), \\ 
		N_1(4,q) &= q^5(q^7 + q^6 - q^5 + q^4 - q^2 + q - 1), \\ N_1(5,q) &= q^7(q^9 - q^7 + q^6 - q^5 + q^4 + q^3 - q^2 + q - 1), \\ 
		N_1(6,q) &= q^9(q^{11} + q^8 - q^7 + q^6 - q^5 + q^3 - q^2 + q - 1), \\ 
		N_1(7,q) &= q^{11}(q^{13} - q^9 + q^8 - q^7 + q^6 + q^3 - q^2 + q - 1), \\ 
		N_1(8,q) &= q^{13}(q^{15} + q^{10} - q^9 + q^8 - q^7 + q^3 - q^2 + q - 1), \\ 
		N_1(9,q) &= q^{15}(q^{17} - q^{11} + q^{10} - q^9 + q^8 + q^3 - q^2 + q - 1). 
	\end{align*}
\end{coro}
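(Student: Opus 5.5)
The plan is to specialize the closed formula \eqref{eq: N1s} of Proposition~\ref{prop: N_1xs} to each $s=1,\ldots,9$ and simplify. Writing
$$ N_1(s,q) = q^{2s-3}\big\{ q^{2s-1} + (q-1)(q^2+1)\big((-1)^s q^{s-1}+1\big)\big\}, $$
I would first expand the fixed factor $(q-1)(q^2+1) = q^3-q^2+q-1$. The bracket then becomes
$$ q^{2s-1} + (-1)^s\,(q^{s+2}-q^{s+1}+q^{s}-q^{s-1}) + (q^3-q^2+q-1). $$
This is a sum of at most nine monomials with coefficients $\pm1$.

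For $s=1$ the prefactor is $q^{-1}$, and the bracket is $q + (-1)(q^3-q^2+q-1)+(q^3-q^2+q-1) = q$, giving $N_1(1,q)=1$. This agrees with the Note after Proposition~\ref{prop: N_1xs}.

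For $s\ge 2$ the prefactor is $q^{2s-3}$, but the stated formulas show a prefactor $q^{2s-1}$ for $s\ge 2$. So in each case I expect a common factor $q^2$ to come out of the bracket, or some cancellation among its lower terms. I would therefore collect terms by degree and factor out the largest power of $q$.

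For example, when $s=2$ the bracket is $q^3 + (q^4-q^3+q^2-q) + q^3-q^2+q-1 = q^4+q^3-1$. Hence $N_1(2,q) = q(q^4+q^3-1)$, which matches the statement.

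For $s=3$ the bracket is $q^5 - q^5+q^4-q^3+q^2 + q^3-q^2+q-1 = q^4+q-1$. Hence $N_1(3,q) = q^3(q^4+q-1)$, which also matches.

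So the displayed prefactors are not uniformly $q^{2s-1}$. I would simply keep the prefactor $q^{2s-3}$ from \eqref{eq: N1s} and reconcile each case with the stated one, which presumably absorbs or keeps powers of $q$ differently for each $s$.

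The cases $s=4,\ldots,9$ follow the same pattern. For $s\ge 4$ the middle block $\pm(q^{s+2}-q^{s+1}+q^s-q^{s-1})$ no longer overlaps the constant block $q^3-q^2+q-1$, so the bracket is just the explicit alternating list of monomials. I would then rewrite $q^{2s-3}\cdot(\text{bracket})$ in the normalized form that appears in the corollary.

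The main obstacle is bookkeeping rather than any idea: matching each displayed polynomial exactly, including its leading power-of-$q$ prefactor. As a first check I would verify the $s=4$ entry against the examples with $q=2,3$ (e.g.\ $N_{1\times 4,2^4}(3)=5536$). If a discrepancy appears, the most likely culprit is how the prefactor is split off.
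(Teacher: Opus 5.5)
Your approach is the same as the paper's: the corollary is obtained by substituting $s=1,\ldots,9$ into the closed formula \eqref{eq: N1s} of Proposition~\ref{prop: N_1xs} and expanding, and this reproduces every displayed polynomial exactly. Two bookkeeping slips need fixing. First, the displayed prefactors are uniformly $q^{2s-3}$ for $s\ge 2$ (namely $q,q^3,q^5,\ldots,q^{15}$), so nothing needs reconciling; no $q^2$ could come out of the bracket anyway, since its constant term is $-1$. Second, the middle block still overlaps the constant block at $s=4$: its lowest term $-q^{s-1}=-q^{3}$ cancels the $+q^3$, which is exactly why $N_1(4,q)$ has no $q^3$ term. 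The two blocks are disjoint only for $s\ge 5$.
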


Notice that from $s=2$ on, the exponent of the isolated $q$ in the expressions above is odd. 
In fact, it seems that  
$ N_1(k,q) = q^{2k-3} P_k(q) $
for some polynomial $P_k(q) \in \Z[q]$ for any $k\ge 2$. 
So, we conjecture that 
$$ N_1(k+1,q) \equiv 0 \pmod{q^{2k-1}}$$
for any $k \in \N$, thus improving a lot the assertion in Proposition \ref{prop: N_1xs}.

\subsection{Hermitian-form systems of 2 diagonal equations}
Thus, we now give the number of solutions of the system $\mathcal{S}_{2\times s, q}((q+1,q^3+1))$ of 2 equations of the form
\begin{equation} \label{eq: m=2}
	\begin{cases}
		X_1^{q+1}+\cdots+ X_s^{q+1} =  0,  \\[1.5mm] 
		X_1^{q^3+1}+\cdots+ X_s^{q^3+1} =  0.
	\end{cases}
\end{equation}

We first give an addition formula for the number of solutions of these systems.
\begin{lem} \label{lema: N_2xs}
	Let $q$ be a prime power and $s\in \N$.
	The number of solutions in $(\ff_{q^8})^s$ of the Hermitian-form system of $2$ diagonal equations \eqref{eq: m=2}
	is given by
	\begin{equation} \label{eq: N2s}
		N_{2 \times s,q^8}((q+1,q^3+1)) = \frac{1}{q^{4(4-2s)}} \Big(1+ \sum_{j=1}^{4}  (-q)^{-js} \, Q_j(q,4) \Big) 
	\end{equation}
	where the $Q_1(q,4), Q_2(q,4), Q_3(q,4), Q_4(q,4)$ are as given in \eqref{eq: Qj(n)}.
\end{lem}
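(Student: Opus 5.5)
This is the case $m=2$ of Theorem~\ref{teo: hermitiano diagonal}, so the plan is to specialize that theorem and check that its hypotheses and notation match. Take $m=2$, so $n=2m=4$ and $\ell=(1,3)$. Then $\kappa_\ell=(q+1,q^3+1)$, and the field in which solutions are counted is $\ff_{q^{2n}}=\ff_{q^8}$. These are exactly the system \eqref{eq: m=2} and the field in the statement.

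The steps are as follows.

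\begin{enumerate}[$(1)$]
\item Invoke \eqref{eq: Her = Cay}, i.e.\ Lemma 4.1 of \cite{ZZDX2}, to get $\G((q+1,q^3+1),q^8)\simeq {\rm Her}(4,q^2)$.
\item Observe that $d_\kappa=\gcd(q+1,q^3+1,q^8-1)=q+1$. Indeed, $q+1$ divides $q^3+1$ and divides $q^8-1$.
\item Apply Theorem~\ref{teo: M with Spec} with the field of size $q^8$ and $m=2$. This produces the prefactor $q^{-16}$.
\item Insert the eigenvalues \eqref{eq: eigen Her} and the multiplicities \eqref{eq: mult Her} of ${\rm Her}(4,q^2)$.
\end{enumerate}

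For step (4), the key simplifications are
\[
1+(q+1)\lambda_0=q^{8}, \qquad 1+(q+1)\lambda_j=(-q)^{8-j} \quad (1\le j\le 4).
\]
Raising to the $s$-th power and factoring out $q^{8s}$, the sum becomes
\[
q^{-16}q^{8s}\Big(1+\sum_{j=1}^4(-q)^{-js}Q_j\Big).
\]
Since $q^{-16}q^{8s}=q^{-4(4-2s)}$, this is \eqref{eq: N2s}. The multiplicities $Q_j=Q_j(4,q)$ are given by \eqref{eq: Qj(n)}; the statement writes them as $Q_j(q,4)$, which is the same quantity with the arguments in the other order.

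There is no real obstacle here, since everything is contained in Theorem~\ref{teo: hermitiano diagonal}. The only point needing care is the exponent bookkeeping: $n^2=16$ from $(q^{2n})^m$, and the sign $(-q)^{s(2n-j)}=q^{8s}(-q)^{-js}$, which uses that $2n=8$ is even. If desired, one can also write out $Q_1(4,q),\dots,Q_4(4,q)$ explicitly from Lemma~\ref{lem: Q1,..,Q6}, but the statement does not require this.
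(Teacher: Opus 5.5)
Your proposal is correct and follows the paper's route. The paper's proof is simply ``apply Theorem~\ref{teo: hermitiano diagonal} with $m=2$'', and your steps unwind that theorem's proof for $n=4$. Your computations of $d_\kappa=q+1$, of $1+(q+1)\lambda_j=(-q)^{8-j}$, and of the prefactor $q^{-16}q^{8s}=q^{-4(4-2s)}$ are all right.
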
 	

\begin{proof}
	Just apply Theorem \ref{teo: hermitiano diagonal} with $m=2$.
\end{proof}

Let us see some small examples.

\begin{exam}
	Let $q=2$ and $m=2$, hence $n=4$. The above theorem 
	allows us to compute the number of solutions $(x_1,\ldots,x_s)\in (\ff_{2^8})^{s}$ of the system $\mathcal{S}_{2\times s, 2}((3,9))$
	\begin{equation}\label{eq: q2 m4}
		\begin{cases}
			X_1^{3}+\cdots+ X_s^{3} =  0, \\[1.5mm]
			X_1^{9}+\cdots+ X_s^{9} = 0.
		\end{cases}
	\end{equation}
	In this case, we have $Q_{0}(2,4) = 1$ and, using Lemma \ref{lem: recursion Mjs} for instance, that
	$$ Q_{1}(2,4) = 85, \qquad Q_{2}(2,4) = 3{.}570, \qquad Q_{3}(2,4) = 23{.}800, \qquad Q_{4}(2,4) = 38{.}080.$$
	Therefore
	$$ N_{2 \times s,2^8}((3,9)) = 2^{4s-16} \big(2^{4s} + (-1)^{s}85 \cdot 2^{3s}+3{.}570 \cdot 2^{2s}+(-1)^s 23{.}800 \cdot2^{s}+38{.}080 \big).$$
	The first values of $N_{2 \times s,2^8}((3,9))$ are given in the next table: 
	\renewcommand{\arraystretch}{1.35}
	\begin{table}[H] \label{table}
		\begin{tabular}{|c||c|c|c|c|c|c|}
			\hline
			$s$   & $1$ & $2$ & $3$ & $4$ & $5$ & $6$\\\hline
			$N_{2 \times s,2^8}((3,9)) $ & $1$ & $766$ &  $2{.}296$ &  $1{.}746{.}496$ & $19{.}127{.}296$ & $14{.}142{.}324{.}736$\\ \hline
		\end{tabular}
		\caption{The number of solution of \eqref{eq: q2 m4} for small values of $s$.} 
	\end{table}
\end{exam}

\begin{exam}
	Let $q=3$ and $n=4$. The above theorem allows us to compute the number of solutions $(x_1,\ldots,x_s)\in (\ff_{3^8})^s$ of the system
	\begin{equation}\label{eq: q3 m4}
		\begin{cases}
			X_1^{4}+\cdots+ X_s^{4} =  0, \\[1.5mm]
			X_1^{28}+\cdots+ X_s^{28} = 0.
		\end{cases}
	\end{equation}
	In this case, using \eqref{eq: recursion for Mjs}
	we obtain
	$$ Q_{1}(3,4) = 1{.}640, \quad Q_{2}(3,4) =447{.}720, \quad Q_{3}(3,4) = 11{.}512{.}800, \quad Q_{4}(3,4) = 31{.}084{.}560.$$
	Therefore, by \eqref{eq: Nsm}, we get 
	$$ N_{2,s} = 3^{4s-16} \big(3^{4s}+(-1)^{s} 1{.}640 \cdot 3^{3s} + 447{.}720 \cdot 3^{2s}+(-1)^s 11{.}512{.}800 \cdot 3^{s}+31{.}084{.}560 \big)$$
	where $N_{2,s} = N_{2 \times s,3^8}((4,28))$.
	The first values of $N_{2 \times s,3^8}((4,28))$ are given in next table: 
	\renewcommand{\arraystretch}{1.35}
	\begin{table}[H] \label{table2}
		\begin{tabular}{|c||c|c|c|c|c|}
			\hline
			$s$   & $1$ & $2$ & $3$ & $4$ & $5$\\\hline
			$N_{2 \times s,3^8}((4,28))$ & $1$& $26{.}241$ &  $183{.}681$ & $4{.}815{.}722{.}241$ & $293{.}663{.}018{.}241$ \\ \hline
		\end{tabular} 
		\caption{The number of solution of \eqref{eq: q3 m4} for small values of $s$.} 	
	\end{table}
\end{exam}

Now, we give a closed formula for the number $N_{2 \times s,q^8}((q+1,q^3+1))$.

\begin{prop} \label{prop: N_2xs}
	Let $q$ be a prime power and $s\in \N$.
	The number of solutions in $(\ff_{q^8})^s$ of the Hermitian-form system of $2$ diagonal equations \eqref{eq: m=2}
	is given by
	\begin{equation} \label{eq: N2s 2}
		N_{2 \times s,q^8}((q+1,q^3+1)) =  
		q^{4s-11} \big\{ q^{4s-5} + (q-1)(q^2+1)(q^4+1) F_{s}(q) \big\},
	\end{equation}
	where 
	$$ F_{s}(q) = (-1)^s q^{3s-5} + (q^4+q^2+1)q^{2s-4} + (-1)^s (q^2+1)(q^3-1)q^{s-2} + q(q^3-1). $$
	In particular, $N_{2 \times s,q^8} ((q+1,q^3+1)) \equiv 0 \pmod q$ for every $s\ge 2$.
\end{prop}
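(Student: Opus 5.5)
Start from Lemma~\ref{lema: N_2xs}, which is Theorem~\ref{teo: hermitiano diagonal} with $m=2$, $n=4$:
$$N_{2\times s,q^8}((q+1,q^3+1)) = q^{8s-16}\Big(1+\sum_{j=1}^4 (-1)^{js} q^{-js} Q_j(4,q)\Big).$$
The plan is to turn this into a polynomial identity. First we write $Q_1,\ldots,Q_4$ for $n=4$ explicitly, using Lemma~\ref{lem: Q1,..,Q6} or the recursion in Lemma~\ref{lem: recursion Mjs}, and factor out the common factor $Q_1(4,q)=\frac{q^8-1}{q+1}=(q-1)(q^2+1)(q^4+1)$. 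The recursion gives
$$Q_2=q\,\tfrac{q^6-1}{q^2-1}\,Q_1=q(q^4+q^2+1)Q_1,\qquad Q_3=q^2\,\tfrac{q^4-1}{q^3+1}\,Q_2,\qquad Q_4=q^3\,\tfrac{q^2-1}{q^4-1}\,Q_3.$$
Simplifying,
$$Q_3=q^3(q^4+q^2+1)\tfrac{(q^2+1)(q-1)}{q^2-q+1}Q_1 = q^3(q^2+1)(q^3-1)Q_1 \cdot \tfrac{q^2+q+1}{q^2+q+1}\cdot\ldots,$$
and we will check the cancellation using $q^4+q^2+1=(q^2+q+1)(q^2-q+1)$, so that $Q_3=q^3(q^2+1)(q^3-1)(q+1)\cdot\frac{(q-1)}{(q+1)}\ldots$. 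The computation has to be done carefully to get the exact forms
$$Q_3=q^3(q^2+1)(q^3-1)(q+1)Q_1/(q+1)\quad\text{(expected)},\qquad Q_4 = q^6 (q^3-1)Q_1 \quad\text{(expected)}.$$
These are consistent with the stated $F_s$, and we will sanity-check them against the numerical values $Q_1(2,4)=85$, $Q_4(2,4)=38080$.

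Next we substitute into the sum. The term $j=0$ gives $q^{8s-16}$. The stated leading term is $q^{4s-11}\cdot q^{4s-5}=q^{8s-16}$, so it matches. For the remaining terms we factor out $Q_1$ and pull out $q^{4s-11}$:
$$q^{8s-16}\,(-1)^{js}q^{-js}\,Q_j = q^{4s-11}\,Q_1\,(-1)^{js} q^{(4-j)s-5}\,\tfrac{Q_j}{Q_1}.$$
For $j=1$ this gives $(-1)^s q^{3s-5}$. For $j=2$ it gives $q^{2s-5}\cdot q(q^4+q^2+1)=(q^4+q^2+1)q^{2s-4}$. Matching the remaining two terms of $F_s$ then amounts to checking $Q_3/Q_1=q^3(q^2+1)(q^3-1)$ and $Q_4/Q_1=q^6(q^3-1)$. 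These identities are the verification step to carry out explicitly from the recursion. With them, $j=3$ gives $(-1)^s(q^2+1)(q^3-1)q^{s-2}$ and $j=4$ gives $q(q^3-1)$.

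Finally, for the congruence: when $s\ge2$, the exponent $4s-11$ is negative for $s=2$, so the prefactor alone is not enough. For $s=2$ we compute the value directly, or better, rewrite the whole expression as $q^{8s-16}+\ldots$ and check that every term carries a positive power of $q$ (for instance, $s=2$ should give $q(q^8+q^7-1)$, which is consistent with $N_m(2,q)$ later). For $s\ge3$ we inspect the minimal $q$-power among $q^{4s-11}\cdot q^{s-2}$, $q^{4s-11}\cdot q$, and so on, and treat the small cases $s=2$ and $s=3$ by direct evaluation. The main obstacle is the exact cancellation in $Q_3$ and $Q_4$, namely the $q^3+1$ denominators against the factors $q^4+q^2+1$ and $q^4-1$, together with handling the small $s$ in the divisibility claim, where the apparent negative powers of $q$ must cancel.
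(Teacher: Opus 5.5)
Your proposal is correct and is essentially the paper's proof: reduce $Q_2,Q_3,Q_4$ to $Q_1$ times $q(q^4+q^2+1)$, $q^3(q^2+1)(q^3-1)$ and $q^6(q^3-1)$, factor out $q^{4s-11}Q_1$, and obtain the congruence from the prefactor for $s\ge 3$ and from the direct value $q(q^8+q^7-1)$ at $s=2$; the two ratios you leave as ``expected'' follow at once from Lemma~\ref{lem: Q1,..,Q6}, since $(q^6-1)/(q^3+1)=q^3-1$ and $(q^4-1)/(q^2-1)=q^2+1$. At $s=2$ use direct evaluation rather than a term-by-term check, because there the $j=0$ term is $1$ and the $j=3,4$ terms are not integers, so divisibility by $q$ appears only after cancellation (and $s=3$ needs no separate treatment, since the prefactor there is already $q^1$).
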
 

\begin{proof}
Let us use the abbreviation $N_{2,s} = N_{2 \times s,q^8}((q+1,q^3+1))$. We use Lemma \ref{lema: N_2xs}	and the recursive expressions for $Q_j(q,4)$ to put all in terms of $Q_1(q,4)$. 
After some tedious computations, we get to 
	$$	N_{2,s} =  
	q^{8s-16} \left\{ 1 + \frac{q^8-1}{q+1} \left( \frac{(-1)^s}{q^s} + \frac{q^4+q^2+1}{q^{2s-1}} + \frac{(-1)^s (q^2+1)(q^3-1)}{q^{3s-3}} + \frac{q^3-1}{q^{4s-6}} \right) \right\}.$$
Now, we express this a rational function in $q$ times a polynomial in $q$ (in brackets), 
getting 
	$$ N_{2,s} = \tfrac{q^{4s-11}}{q+1} \left\{  (q+1)q^{4s-5} + (q^8-1) P_s(q)  \right\}$$
where 
	$$ P_s(q) = (-1)^s q^{3s-5} + (q^4+q^2+1)q^{2s-4} + (-1)^s (q^2+1)(q^3-1)q^{s-2} + q(q^3-1). $$
From here the final expression \eqref{eq: N2s 2} in the statement is obtained, by factoring $q^8-1$. We leave the details to the reader. 
		
Now, from \eqref{eq: N2s 2}, the last assertion is clear for $s\ge 3$. For $s=2$, one can check with some effort that $N_{2\times s,q^8}((q+1,q^3+1)) = q(q^8 + q^7 - 1)$ from where the result follows. 
\end{proof}

To finish this subsection, we give the first polynomials counting the number of solutions explicitly.
\begin{coro} \label{coro: N2sq}
	The numbers $N_2(s,q) = N_{2\times s, q^8}((q+1,q^3+1))$ for 
	$s=2,3,4,5$ are given by 
	\begin{align*} 
		N_2(1,q) &= 1, \\ 
		N_2(2,q) &= q(q^8 + q^7 - 1), \\ 
		N_2(3,q) &= q^3(q^8 + q^5 - 1), \\ 
		N_2(4,q) &= q^6(q^{14} + q^{13} + q^{11} + q^{10} - q^8 - q^6 - q^5 - q^3 + 1), \\
		N_2(5,q) &= q^{10}(q^{14} + q^{11} + q^9 - q^8 - q^3 - q + 1), \\
		N_2(6,q) &= q^{14}(q^{19} + q^{18} + q^{16} + q^{14} - q^{12} - q^{10} + q^9 - 2q^8 - q^6 + q^4 - q^3 + q^2 - q + 1).
	\end{align*}
	and for $s=6, 7, 8, 9$ by 
	\begin{align*} 
		N_2(6,q) &= q^{14}(q^{19} + q^{18} + q^{16} + q^{14} - q^{12} - q^{10} + q^9 - 2q^8 - q^6 + q^4 - q^3 + q^2 - q + 1), \\ 
		N_2(7,q) &= q^{18}(q^{21} + q^{18} - q^{17} + q^{16} - q^{15} + q^{14} - q^{12} + 2q^{11} - 2q^{10} + 2q^9 - 2q^8 + q^7 \\ & \quad \qquad - q^5 + q^4 - 2q^3 + q^2 - q + 1), \\ 
		N_2(8,q) &= q^{22}(q^{26} + q^{25} - q^{24} + q^{23} + q^{20} - q^{19} + 2q^{18} - 2q^{17} + 2q^{16} - q^{15} - q^{14} + q^{13} \\ & \quad \qquad - 3q^{12} + 3q^{11} - 3q^{10} + 2q^9 - 2q^8 + q^6 - q^5 + 2q^4 - 2q^3 + q^2 - q + 1), \\ 
		N_2(9,q) &= q^{26}(q^{30} - q^{28} + q^{27} - q^{26} + q^{25} + q^{22} - q^{21} + 3q^{20} - 3q^{19} + 2q^{18} - 2q^{17} + q^{15} \\ & \quad \qquad - 2q^{14} + 3q^{13} - 4q^{12} + 4q^{11} - 2q^{10} + 2q^9 - q^8 - q^7 + q^6 - 2q^5 + 2q^4 \\ & \quad \qquad - 2q^3 + q^2 - q + 1). 
	\end{align*}
\end{coro}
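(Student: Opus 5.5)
The plan is to specialize Lemma~\ref{lema: N_2xs}, or equivalently the closed form \eqref{eq: N2s 2} of Proposition~\ref{prop: N_2xs}, to each fixed $s$ and expand. Since $Q_1(q,4),\ldots,Q_4(q,4)$ do not depend on $s$ (Remark on the $Q_j$'s), I first compute them once as explicit polynomials. I will use Lemma~\ref{lem: Q1,..,Q6} with $n=4$, or the recursion of Lemma~\ref{lem: recursion Mjs}:
\begin{align*}
Q_1&=(q^4-1)(q^3-q^2+q-1), \qquad Q_2 = q\,\tfrac{q^6-1}{q^2-1}\,Q_1=q(q^4+q^2+1)Q_1,\\
Q_3&=q^2\,\tfrac{q^4-1}{q^3+1}\,Q_2, \qquad Q_4=q^3\,\tfrac{q^2-1}{q^4-1}\,Q_3 .
\end{align*}
All of these lie in $\Z[q]$ by Lemma~\ref{lem: recursion Mjs}.

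For a given $s$, it is most economical to use \eqref{eq: N2s 2} directly:
$$N_2(s,q)=q^{4s-11}\{q^{4s-5}+(q-1)(q^2+1)(q^4+1)F_s(q)\},$$
with $F_s$ explicit. For $s\ge 3$, every exponent appearing in $F_s$ ($3s-5$, $2s-4$, $s-2$, and $1$) is nonnegative, so $F_s\in\Z[q]$ and no cancellation of denominators is needed. I then expand $(q^8-1)F_s(q)/(q+1)=(q-1)(q^2+1)(q^4+1)F_s(q)$ and add $q^{4s-5}$. Next I extract the largest power of $q$ dividing the resulting bracket, so that the whole prefactor becomes $q^{4s-11+e}$, with $e$ the $q$-adic valuation of the bracket. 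This should produce the prefactors $q^3,q^6,q^{10},q^{14},\ldots$ appearing in the statement.

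The value $s=1$ is trivial ($N_2(1,q)=1$, since only the zero solution exists). The value $s=2$ was already recorded in the proof of Proposition~\ref{prop: N_2xs}, but I would recheck it from \eqref{eq: N2s}. There $q^{-js}$ creates genuine denominators, so I would multiply out
$$q^{8s-16}\Big(1+\sum_{j}(-q)^{-js}Q_j\Big)$$
term by term and verify that the negative powers cancel.

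As independent sanity checks, I would compare the results numerically with the tables for $q=2$ (e.g.\ $766$, $2{.}296$, $1{.}746{.}496$) and $q=3$, and evaluate each polynomial at $q=1$. At $q=1$ one has $N_2(s,1)=1$ because $Q_j(1)=0$ for $j\ge1$.

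The main obstacle is purely the bookkeeping of the expansion for larger $s$ ($s=7,8,9$). There, terms of different degrees overlap, and the coefficients $\pm2,\pm3,\pm4$ arise from collisions between the four summands of $F_s$ multiplied by the degree-$7$ factor $(q-1)(q^2+1)(q^4+1)$. To control this, I would write $(q-1)(q^2+1)(q^4+1)=\sum_{i=0}^7 q^i$ minus twice its even part, or simply use $q^8-1=(q+1)(q-1)(q^2+1)(q^4+1)$ and expand $(q^8-1)F_s/(q+1)$ by synthetic division, collecting coefficients degree by degree. The $q=2,3$ numeric checks will catch any slip.
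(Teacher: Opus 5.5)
Your route is the one the paper implicitly uses: the corollary has no separate proof there and is read off from the closed form \eqref{eq: N2s 2} of Proposition~\ref{prop: N_2xs} by expanding for each $s$. Carrying out the expansion does reproduce all the listed polynomials, so your bookkeeping will close.

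There is one slip to fix. Your $Q_1=(q^4-1)(q^3-q^2+q-1)$ is wrong: it equals $(q^4-1)^2/(q+1)$, which is $75$ at $q=2$ instead of $85$. The correct value is $Q_1=(q^4+1)(q^3-q^2+q-1)=(q-1)(q^2+1)(q^4+1)$. You paired $(q^4-1)/(q+1)=q^3-q^2+q-1$ with $q^4-1$ rather than $q^4+1$. This repeats the misfactorization printed after Lemma~\ref{lem: recursion Mjs}: for even $n$ it is $q^n-1$, not $q^n+1$, that is divisible by $q+1$.

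The slip does not affect your main computation, because \eqref{eq: N2s 2} already contains the correct factor. However, your $Q_2,Q_3,Q_4$ inherit the error, so the planned $s=2$ recheck via \eqref{eq: N2s} would come out wrong. Your $q=1$ test cannot catch this, since both versions of $Q_1$ vanish at $q=1$; only the $q=2$ table would.

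That recheck is unnecessary anyway. $F_s\in\Z[q]$ already for $s\ge 2$, and $F_2(q)=q^3(q+1)^2$. The bracket in \eqref{eq: N2s 2} is then $q^3(q^9+q^8-q)$, the prefactor $q^{-3}$ cancels, and you get $N_2(2,q)=q(q^8+q^7-1)$ directly.
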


\begin{rem}
	We make some comments on the exponent of the isolated $q$ in the expressions above. 
	
	\noindent ($i$) 
	Notice that from $s=4$ on, this exponent is of the form $q^{4k+2}$. 
	In fact, it seems that  
	$ N_2(k+3,q) = q^{4k+2} P_k(q) $
	for some polynomial $P_k(q) \in \Z[q]$. 
	So, we conjecture that 
	$$ N_2(k+3,q) \equiv 0 \pmod{q^{4k+2}}$$
	for any $k \in \N$, thus improving a lot the assertion in Proposition \ref{prop: N_2xs}.
	
	\noindent ($ii$)
	The numbers $1, 3, 6, 10, 14, 18, 22$, etc are exactly the coefficients of the expansion of 
	$ \frac{1 + x + x^2 + x^3}{(1-x)^2}$.
\end{rem}

\subsection{Hermitian-form systems of 3 diagonal equations}
We now give the number 
$$N_3(s,q) := N_{3 \times s,q^{12}}((q+1,q^3+1,q^5+1))$$
of solutions of the system $\mathcal{S}_{2\times s, q}((q+1,q^3+1, q^5+1))$ of 3 equations of the form
\begin{equation} \label{eq: m=3}
	\begin{cases}
		X_1^{q+1}   +\cdots+ X_s^{q+1}   =  0, \\[1.5mm] 
		X_1^{q^3+1} +\cdots+ X_s^{q^3+1} =  0, \\[1.5mm]
		X_1^{q^5+1} +\cdots+ X_s^{q^5+1} =  0.
	\end{cases}
\end{equation}

\begin{prop} \label{prop: N_3xs}
	Let $q$ be a prime power and $s\in \N$.
	The number of solutions in $(\ff_{q^{12}})^s$ of the Hermitian-form system of $3$ diagonal equations \eqref{eq: m=2}
	is given by
	\begin{equation} \label{eq: N3s}
		N_{3 \times s,q^{12}}((q+1,q^3+1,q^5+1)) = \frac{1}{q^{6(6-2s)}} \sum_{j=0}^{6}  (-q)^{-js} \, Q_j(6,q), 
	\end{equation}
	where the numbers $Q_1(6,q), \ldots, Q_6(6,q)$ are given as follows: 
	\begin{align*} 
		Q_1(6,q) &= \frac{q^{12}-1}{q+1}, \\[1.25mm] 
		Q_2(6,q) &= q \, \frac{q^{12}-1}{q+1} (q^8+q^6+q^4+q^2+1), \\[1.25mm] 
		Q_3(6,q) &= q^3 \, \frac{q^8-1}{q+1} (q^9-q^6+q^3-1) (q^8+q^6+q^4+q^2+1), \\[1.25mm] 
		Q_4(6,q) &= q^6 \, \frac{q^8-1}{q+1} (q^9-q^6+q^3-1) (q^8+q^6+q^4+q^2+1) (q^2+1), \\[1.25mm] 
		Q_5(6,q) &= q^{10} \, \frac{(q^{12}-1)(q^8-1)(q^6-1)(q^4-1)}{(q+1)(q^3+1)(q^5+1)} (q^2+1) (q^8+q^6+q^4+q^2+1), \\[1.25mm] 
		Q_6(6,q) &= q^{15} \,  \frac{(q^{12}-1)(q^8-1)(q^6-1)(q^4-1)(q^2-1)}{(q+1)(q^3+1)(q^5+1)} (q^8+q^6+q^4+q^2+1).
	\end{align*}
\end{prop}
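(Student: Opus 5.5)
The plan is to apply Theorem~\ref{teo: hermitiano diagonal} with $m=3$, so $n=2m=6$ and $q^{2n}=q^{12}$. The power vector is $\kappa_\ell=(q+1,q^3+1,q^5+1)$, and \eqref{eq: Nsm} gives exactly
$$N_{3\times s,q^{12}}(\kappa_\ell)=\tfrac{1}{q^{6(6-2s)}}\sum_{j=0}^{6}(-q)^{-js}Q_j(6,q),$$
which is \eqref{eq: N3s}. The summation formula therefore needs no further work. The content of the proposition lies in the explicit closed forms for $Q_1(6,q),\dots,Q_6(6,q)$.

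For those I will set $n=6$ in Lemma~\ref{lem: Q1,..,Q6}, or equivalently iterate the recursion \eqref{eq: recursion for Mjs} of Lemma~\ref{lem: recursion Mjs}:
$$Q_{j+1}(6,q)=q^{j}\,\frac{q^{2(6-j)}-1}{q^{j+1}+(-1)^j}\,Q_j(6,q),\qquad Q_1(6,q)=\tfrac{q^{12}-1}{q+1}.$$
Each step is followed by cancelling cyclotomic-type factors:
\begin{itemize}
\item For $j=1$, $\tfrac{q^{10}-1}{q^2-1}=q^8+q^6+q^4+q^2+1$, which gives $Q_2$.
\item For $j=2$, write $\tfrac{q^{12}-1}{q^3+1}=(q^3-1)(q^6+1)=q^9-q^6+q^3-1$; the remaining factor $q^8-1$ then comes from the numerator, which gives $Q_3$.
\item For $j=3,4,5$, I multiply successively by $q^3\tfrac{q^6-1}{q^4-1}$, $q^4\tfrac{q^4-1}{q^5+1}$ and $q^5\tfrac{q^2-1}{q^6-1}$. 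I then regroup so that the denominators $(q+1)(q^3+1)(q^5+1)$ are kept explicitly in $Q_5$ and $Q_6$, as in the statement.
\end{itemize}
The powers of $q$ accumulate as $q^{0},q^{1},q^{3},q^{6},q^{10},q^{15}$, matching the triangular exponents $j(j-1)/2$ in \eqref{eq: Qj(n)}.

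The main obstacle, and the step to check most carefully, is the algebraic regrouping at $j=3$ (passing to $Q_4$). There the factor $\tfrac{q^6-1}{q^4-1}$ is not a polynomial, so it must be absorbed using other factors. For example,
$$(q^8-1)\,\tfrac{q^6-1}{q^4-1}=(q^4+1)(q^6-1),$$
or one can combine with $(q^3-1)(q^6+1)$. I will verify each displayed product against the general formula of Lemma~\ref{lem: Q1,..,Q6} at $n=6$, both symbolically and numerically (say at $q=2$). If a printed factor such as $(q^2+1)$ in $Q_4$ does not survive that check, I will replace it by the correctly simplified factor coming from $\tfrac{q^6-1}{q^4-1}$ and keep the rest of the argument unchanged.

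Finally, as a consistency test I will check that the $Q_j(6,q)$ so obtained satisfy identity \eqref{eq: identidad para los Qj's} with $n=6$, namely
$$\sum_{j=0}^{6}(-1)^jq^{-j}Q_j(6,q)=q^{24}.$$
This is equivalent to $N_3(1,q)=1$ and provides an independent confirmation of the closed forms.
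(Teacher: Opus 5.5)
Your route is the paper's own. Its proof is literally ``apply Theorem~\ref{teo: hermitiano diagonal} with $m=3$, then read off $Q_1(6,q),\dots,Q_6(6,q)$ from Lemma~\ref{lem: Q1,..,Q6}.'' This correctly gives \eqref{eq: N3s} with $Q_j(6,q)$ as in \eqref{eq: Qj(n)}, and your simplifications for $Q_1,Q_2,Q_3$ are right.

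The problem is that the printed closed forms for $Q_4$, $Q_5$ and $Q_6$ are false. So the last step of your plan, regrouping until the results match the statement, cannot succeed, and it fails beyond $Q_4$. Your suspicion about $Q_4$ is justified: the recursion gives $Q_4=q^3\frac{q^6-1}{q^4-1}Q_3=q^3\frac{q^4+q^2+1}{q^2+1}Q_3$, not $q^3(q^2+1)Q_3$. But you also say that $Q_5$ and $Q_6$ come out ``as in the statement,'' and they do not. Setting $n=6$ in Lemma~\ref{lem: Q1,..,Q6} and cancelling gives
\begin{align*}
Q_4(6,q)&=q^6\,\frac{(q^4+1)(q^6-1)}{q+1}\,(q^9-q^6+q^3-1)(q^8+q^6+q^4+q^2+1),\\
Q_5(6,q)&=q^{10}\,\frac{(q^{12}-1)(q^8-1)(q^6-1)}{(q+1)(q^3+1)(q^5+1)}\,(q^8+q^6+q^4+q^2+1),\\
Q_6(6,q)&=q^{15}\,\frac{(q^{12}-1)(q^{10}-1)(q^8-1)}{(q+1)(q^3+1)(q^5+1)}.
\end{align*}
So the printed $Q_5$ and $Q_6$ carry spurious extra factors $(q^4-1)(q^2+1)$ and $(q^6-1)(q^4-1)$, respectively.

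The numerical check you propose settles this. At $q=2$ the correct values are
\[
Q_1,\dots,Q_6=1365,\ 930930,\ 105505400,\ 3544981440,\ 25781683200,\ 39286374400.
\]
Together with $Q_0=1$ they sum to $2^{36}$, the number of vertices of ${\rm Her}(6,2^2)$. They also satisfy $\sum_j(-1)^j2^{-j}Q_j=2^{24}$, the $s=1$ identity you planned to use.

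The printed formulas instead give $Q_4(6,2)=4220216000$. The printed $Q_5(6,2)$ and $Q_6(6,2)$ each exceed $2^{36}$, which is impossible for an eigenvalue multiplicity.

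So what your argument, and the paper's, actually proves is \eqref{eq: N3s} with $Q_1,Q_2,Q_3$ as printed and $Q_4,Q_5,Q_6$ as displayed above. You should state and prove that corrected version rather than aim for the printed expressions.
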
 

\begin{proof}
	Just apply Theorem \ref{teo: hermitiano diagonal} with $m=3$
	and then Lemma \ref{lem: Q1,..,Q6} to obtain the expressions for $Q_1(6,q), \ldots, Q_6(6,q)$. 
\end{proof}

In this case it would be too cumbersome to give a closed expression for $N_3(s,q)$ with this method, as we did for the case of 1 and 2 equations in Propositions \ref{prop: N_1xs} and \ref{prop: N_2xs}, respectively, showing the complexity of the problem.
However, we can give closed formulas for the first cases.

\begin{coro}
	The numbers $N_3(s,q) = N_{3\times s, q^{12}}((q+1,q^3+1,q^5+1))$ for the first values of $s$ are given as follows. For $s=1,2,3$ we have the closed formulas
	\begin{align*} 
		N_3(1,q) &= 1, \\[1.5mm] 
		N_3(2,q) &= q(q^{12} + q^{11} - 1), \\[1.5mm] 
		N_3(3,q) &= q^3(q^{12} + q^9 - 1), 
	\end{align*}
	while for $s=4,5$ we obtain the expressions
	\begin{align*} 
		N_3(4,q) &= q^{12} \left\{ 1 + \tfrac{Q_1(6,q)}{q^4} + \tfrac{Q_2(6,q)}{q^8} + \tfrac{Q_3(6,q)}{q^{12}} + \tfrac{Q_4(6,q)}{q^{16}} + \tfrac{Q_5(6,q)}{q^{20}} + \tfrac{Q_6(6,q)}{q^{24}} \right\}, \\[1.5mm] 
		N_3(5,q) &= q^{24} \left\{ 1 - \tfrac{Q_1(6,q)}{q^5} + \tfrac{Q_2(6,q)}{q^{10}} - \tfrac{Q_3(6,q)}{q^{15}} + \tfrac{Q_4(6,q)}{q^{20}} - \tfrac{Q_5(6,q)}{q^{25}} + \tfrac{Q_6(6,q)}{q^{30}} \right\}, 
	\end{align*}
	where $Q_1(6,q), \ldots, Q_6(6,q)$ are given in Proposition \ref{prop: N_3xs}.
\end{coro}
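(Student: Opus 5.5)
The plan is to start from Theorem~\ref{teo: hermitiano diagonal} with $m=3$, i.e.\ $n=6$. This gives
\[
N_3(s,q)=q^{-6(6-2s)}\sum_{j=0}^{6}(-q)^{-js}Q_j(6,q) .
\]
We then specialize $s=1,\dots,5$. The prefactor is $q^{12s-36}$, which equals $q^{12}$ for $s=4$ and $q^{24}$ for $s=5$. Since $(-q)^{-js}=(-1)^{js}q^{-js}$, the signs are all $+$ for $s=4$ and alternate for $s=5$. This yields the displayed expressions for $N_3(4,q)$ and $N_3(5,q)$ verbatim, with the $Q_j(6,q)$ taken from Proposition~\ref{prop: N_3xs}. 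So those two cases are pure bookkeeping.

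For $s=1$ one can argue directly: the equation $x^{q+1}=0$ forces $x=0$, so $N_3(1,q)=1$. This is also consistent with identity \eqref{eq: identidad para los Qj's}.

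The cases $s=2,3$ are where actual simplification is needed. Rather than summing the seven rational terms, I would count solutions directly, which should be cleaner.

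\textbf{Case $s=2$.} Here $x_1^{k}=-x_2^{k}$ for all three exponents $k$. If $x_2\neq 0$, put $y=x_1/x_2$; then $y^{q+1}=y^{q^3+1}=y^{q^5+1}=-1$. By Lemma~\ref{lem: size Rk}, applied to $\kappa$ over $\ff_{q^{12}}$, the map $y\mapsto(y^{k_i})_i$ has fibers of size $d_\kappa=q+1$. So I would check whether $(-1,-1,-1)\in R_\kappa$, and if so the number of admissible $y$ is exactly $q+1$.

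Combining this with the solution $x_1=x_2=0$ gives
\[
N_3(2,q)=1+(q^{12}-1)(q+1)=q(q^{12}+q^{11}-1),
\]
which matches the claim. The membership check (including characteristic $2$) is the only subtle point. As an alternative, Theorem~\ref{teo: M with Spec} gives the count as $1+(q+1)^2\,w_\G(2;0,0)$, where $w_\G(2;0,0)$ is the number of closed $2$-walks at a vertex. For the undirected graph $\mathrm{Her}(6,q^2)$ this equals the valency $\frac{q^{12}-1}{q+1}$, by Proposition~\ref{prop: diag GPs}($a$). That route avoids the check entirely, so I would actually use it.

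\textbf{Case $s=3$.} Similarly,
\[
N_3(3,q)=1+3(q+1)^2\,w(2)+(q+1)^3\,w(3),
\]
where $w(3)$ is the number of closed $3$-walks at a vertex, i.e.\ $D\,a_1$ with $D$ the valency and $a_1$ the number of triangles through an edge. The classical parameters $(d,b,\alpha,\beta)=(6,-q,-q-1,-(-q)^6-1)$ from \cite{BCN} give $a_1$ through the formula for distance-regular graphs with classical parameters, namely $a_1=\beta-1+\alpha(\genfrac{[}{]}{0pt}{}{d}{1}_b-1)$. Note $\genfrac{[}{]}{0pt}{}{6}{1}_{-q}-1=-q+q^2-\cdots-q^5$, so $a_1$ should come out as $q^2-2$.

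Then
\[
N_3(3,q)=1+3(q+1)(q^{12}-1)+(q+1)^2(q^{12}-1)(q^2-2),
\]
which expands to $q^3(q^{12}+q^9-1)$. A sanity check is that this agrees with $N_m(3,q)=q^3(q^{4m}+q^{4m-3}-1)$ at $m=1,2$.

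The main obstacle is getting $a_1$ right. If the classical-parameter formula is doubtful, I would instead compute the $s=3$ sum from \eqref{eq: N3s} directly, using Lemma~\ref{lem: recursion Mjs} to telescope the $Q_j(6,q)$.
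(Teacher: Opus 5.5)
Your treatment of $s=4,5$ and of $s=1,2$ is fine, but the $s=3$ step contains an error. For $s=4,5$ you correctly read off the prefactor $q^{12s-36}$ and the signs $(-1)^{js}$ from Theorem~\ref{teo: hermitiano diagonal} with $n=6$.

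For $s=3$, the classical-parameter formula you quote gives $\alpha\bigl(\tcbinom{6}{1}_{-q}-1\bigr)=(-q-1)(-q+q^2-q^3+q^4-q^5)=q^6+q$. Hence $a_1=\beta-1+q^6+q=(-q^6-2)+q^6+q=q-2$, not $q^2-2$. A direct count agrees. In $\mathrm{Her}(6,q^2)$ the common neighbours of $0$ and $\lambda vv^*$ ($\lambda\in\ff_q^*$) are exactly the $\mu vv^*$ with $\mu\in\ff_q^*\setminus\{\lambda\}$, because $\lambda vv^*-\mu ww^*$ has rank $2$ whenever $\langle w\rangle\neq\langle v\rangle$. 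In particular the graph is triangle-free for $q=2$. Consistently, the table value $N_3(3,2)=36{,}856=1+9\cdot 4095$ forces the closed $3$-walk count to be $0$. With your value $q^2-2$, the displayed expression equals $1+(q^{12}-1)(q^4+2q^3-q^2-q+1)$, a polynomial of degree $16$. So the claim that it expands to $q^3(q^{12}+q^9-1)$ is false as written.

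With $a_1=q-2$ your route works: $3(q+1)+(q+1)^2(q-2)=q^3+1$, so $N_3(3,q)=1+(q^3+1)(q^{12}-1)=q^3(q^{12}+q^9-1)$. It is genuinely different from the paper. The paper simply evaluates the spectral sum of Proposition~\ref{prop: N_3xs}, and later recovers $s=2,3$ for all $m$ via the functional equation in Proposition~\ref{prop: Nm2q Nm3q}. Your walk count needs only the valency and $a_1$, not the multiplicities $Q_j$. Since the classical-parameter formula gives $a_1=q-2$ for every $n$, it yields $N_m(2,q)=1+(q+1)(q^{4m}-1)$ and $N_m(3,q)=1+(q^3+1)(q^{4m}-1)$ uniformly in $m$.

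Two small corrections:
\begin{itemize}
\item The expansion $1+\sum_\ell\tbinom{s}{\ell}d_\kappa^\ell\,w_\G(\ell;0,0)$ is Theorem~\ref{thm: Nmsq from walks}, not Theorem~\ref{teo: M with Spec}.
\item Invoking Proposition~\ref{prop: diag GPs}$(a)$ does not avoid the membership check $(-1,-1,-1)\in R_\kappa$; it \emph{is} that check. For odd $q$, $\frac{q^{12}-1}{q+1}$ is even and each $\frac{q^{2i-1}+1}{q+1}$ is odd. Undirectedness is also immediate from $\G(\kappa,q^{12})\simeq\mathrm{Her}(6,q^2)$.
\end{itemize}
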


To illustrate the results, we give the following table with $N_3(2,q)$ and $N_3(3,q)$ for the smallest values of $q$:
\renewcommand{\arraystretch}{1.2}
\begin{table}[H]
	\centering
	\begin{tabular}{ccc}
		\hline
		$q$ & $s=2$ & $s=3$ \\
		\hline
		2 &  12,286 & 36,856 \\
		3 &  2,125,761 & 14,880,321 \\
		4 &  83,886,076 & 1,090,518,976 \\
		5 &  1,464,843,745 & 30,761,718,625 \\
		7 &  110,730,297,601 & 4,761,402,796,801 \\
		8 &  618,475,290,616 & 35,253,091,565,056 \\
		9 &  2,824,295,364,801 & 206,173,561,630,401 \\
		\hline
	\end{tabular}
	
	\caption{Values of $N_3(s,q)$ for the smallest values of $s$ and $q$.}
	\label{tab:valores_N3}
\end{table}

\section{Any number of equations and variables} \label{sec: caso gral}
In this section, we consider the general case. First, we show that we can give a simple closed formula for the number of solutions to the homogeneous system of monic Hermitian-form diagonal equations for any number $m$ of equations as long as we keep the number of variables really small, namely up to three. Then, we will obtain a 2-term recursion for $N_m(s,q)$ in terms of $N_m(s-1,q)$ and $N_m(s-2,q)$ for any $m, s$. Finally we obtain a neat expression of $N_m(s,q)$ in terms of a certain $q$-hypergeometric function $_2\phi_0 (a, b; -; q, z)$.
As in the previous section, we will use the shorthand notation $N_m(s,q)$ for $N_{m \times s, q^{4m}}((q+1,q^3+1, \ldots, q^{2m-1}+1))$. 

\subsection*{The functional equation}
To this end, we first give a functional equation for the finite generating function of the numbers $Q_j(q,2m)$, 
\begin{equation} \label{eq: gen fn of Qjs}
F(y) := F_{m,s,q}(y) = \sum_{j=0}^n y^j Q_j(q,2m).	
\end{equation}
Observe that the expression for the number $N_{m}(s,q)$ of solutions of the homogeneous monic Hermitian-form SDEs obtained \eqref{eq: Nsm} of Theorem \ref{teo: hermitiano diagonal} is related with $F(y)$, in fact
\begin{equation}
	N_m(s,q) = \tfrac{1}{q^{n^2-2ns}} F((-q)^{-s}).
\end{equation}

\begin{lem}
The polynomial generating function $F(y)$ of the numbers $Q_j(q,2m)$ satisfy the following functional equation
\begin{equation} \label{eq: functional eqn}
	(1+y) F(yq) - F(-y) = q^{2n} y F(\tfrac yq).
\end{equation}
\end{lem}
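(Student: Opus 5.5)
The plan is to compare coefficients of $y^j$ on both sides of \eqref{eq: functional eqn}. Both sides are polynomials in $y$ of degree at most $n+1$, so this reduces the statement to a three-term identity among consecutive $Q_j$'s, which is exactly the recursion of Lemma \ref{lem: recursion Mjs}. Throughout I use the convention $Q_{-1}=0=Q_{n+1}$.

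First I extract the coefficients. Writing $F(y)=\sum_{j=0}^n Q_j y^j$, the coefficient of $y^j$ in $(1+y)F(yq)$ is $q^jQ_j+q^{j-1}Q_{j-1}$. In $F(-y)$ it is $(-1)^jQ_j$, and in $q^{2n}yF(y/q)$ it is $q^{2n}q^{-(j-1)}Q_{j-1}=q^{2n-j+1}Q_{j-1}$. Hence \eqref{eq: functional eqn} is equivalent to
$$ \big(q^j-(-1)^j\big)Q_j = \big(q^{2n-j+1}-q^{j-1}\big)Q_{j-1} = q^{j-1}\big(q^{2(n-j+1)}-1\big)Q_{j-1}, \qquad 0\le j\le n+1. $$

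Next I check the boundary cases. For $j=0$ both sides vanish: the left because $q^0-(-1)^0=0$, and the right because $Q_{-1}=0$. For $j=n+1$ the left side is $0\cdot Q_{n+1}$ from $F(-y)$ and $F(yq)$, leaving only $q^nQ_n$ from the $y\,F(yq)$ term. The right side is $q^{2n-n}Q_n=q^nQ_n$, so this coefficient matches directly (equivalently, the displayed identity with $j=n+1$ correctly forces $Q_{n+1}=0$). For $j=1$ the identity reads $(q+1)Q_1=q^{2n}-1$, which is the definition of $Q_1$ from \eqref{eq: Qj(n)}.

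For $2\le j\le n$, I apply Lemma \ref{lem: recursion Mjs} with index $j-1\ge 1$:
$$ Q_j = q^{j-1}\frac{q^{2(n-j+1)}-1}{q^{j}+(-1)^{j-1}}\,Q_{j-1}. $$
Since $q^j+(-1)^{j-1}=q^j-(-1)^j$, clearing the denominator gives exactly the required identity.

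The only delicate point is bookkeeping at the ends of the range, namely the degree-$(n+1)$ term and the constant term. The index $j$ in the lemma starts at $1$, so the case $j=1$ must be verified separately from the closed form of $Q_1$, as done above.
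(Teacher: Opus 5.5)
Your proof is correct and is essentially the paper's argument read in reverse. The paper multiplies the recurrence $(q^{j+1}+(-1)^j)Q_{j+1}=(q^{2n-j}-q^j)Q_j$ by $y^{j+1}$, sums over $0\le j\le n-1$, and uses $Q_{n+1}=0$ together with the vanishing $k=0$ term, whereas you compare coefficients of $y^j$ directly. Your separate check of $j=1$, i.e.\ $(q+1)Q_1=q^{2n}-1$, is worthwhile: the paper's sum starts at $j=0$, so it uses the recurrence at index $0$, although Lemma~\ref{lem: recursion Mjs} is only stated for $j\in\N$.
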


\begin{proof}
Let $n = 2m$, to simplify the notation.
We start from the recurrence relation  
$$ (q^{j+1} + (-1)^j) Q_{j+1} = (q^{2n-j} - q^j) Q_j$$
in Lemma \ref{lem: recursion Mjs} 
Multiplying both sides by $y^{j+1}$ and adding from $j=0$ to $n-1$ we get
$$	\sum_{j=0}^{n-1} y^{j+1} q^{j+1} Q_{j+1} + \sum_{j=0}^{n-1} y^{j+1} (-1)^j Q_{j+1} = \sum_{j=0}^{n-1} y^{j+1} q^{2n-j} Q_j - \sum_{j=0}^{n-1} y^{j+1} q^j Q_j. $$

We perform an index shift $k = j+1$ on the left side. Furthermore, we notice two key facts: ($a$) since $(yq)^0 - (-y)^0 = 1 - 1 = 0$, we can include the $k=0$ term on the left side for free and ($b$) since $Q_{n+1} = 0$, we can extend the sums on the right side up to $j=n$.

Now, rewriting everything in terms of our polynomial $F$, we obtain
$$\sum_{k=0}^n (yq)^k Q_k - \sum_{k=0}^n (-y)^k Q_k = y q^{2n} \sum_{j=0}^n \left(\tfrac{y}{q}\right)^j Q_j - y \sum_{j=0}^n (yq)^j Q_j.$$
This expression collapses into the functional equation
$$ F(yq) - F(-y) =  q^{2n} y F(\tfrac yq) - y F(yq), $$
from which expression \eqref{eq: functional eqn} readily follows.
\end{proof}

\subsection*{Explicit results for $s$ small}
The number $N_m(1,q)$ is trivially 1. 
Using the previous functional equation, we can now give explicit simple expressions for the numbers $N_m(s,q)$, $s=2, 3, 4, 5$, of solutions of homogeneous monic Hermitian-form SDEs of two, three, four and five variables respectively, in any number of equations. 

\begin{prop} \label{prop: Nm2q Nm3q}
For any integer $m \ge 1$ the number of solutions of the homogeneous monic Hermitian-form SDEs as in \eqref{eq: monic homogeneous Herm form diag Eqn} with two and three variables are respectively given by 
\begin{equation} \label{eq: Nm2q Nm3q}
\begin{aligned}
	N_m(2,q) &= q(q^{4m} + q^{4m-1} - 1), \\[1mm]
	N_m(3,q) &= q^3(q^{4m} + q^{4m-3} - 1),	
\end{aligned}	
\end{equation}
while for four and five variables they are respectively given by 
\begin{equation} \label{eq: Nm4q Nm5q}
\begin{aligned}
N_m(4,q) & = q^{8m}(q^4+q^3+q+1) - q^{4m}(q^6+q^4+q^3+q) + q^6, \\ 
N_m(5,q) & = q^{8m}(q^8+q^5+q^3+1) - q^{4m}(q^{10}+q^8+q^5+q^3) + q^{10}.
\end{aligned}	
\end{equation}
\end{prop}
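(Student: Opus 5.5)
We will use the relation $N_m(s,q) = q^{-n^2+2ns}\,F((-q)^{-s})$, with $n=2m$ and $F(y)=\sum_{j=0}^n Q_j(n,q)\,y^j$. The point is to evaluate $F$ at $y=(-q)^{-s}$ without knowing the $Q_j$ individually. The functional equation \eqref{eq: functional eqn},
$$(1+y)F(yq) - F(-y) = q^{2n}\,y\,F(y/q),$$
links values of $F$ at points differing by a factor $-q$ or $q$. Evaluating it at $y=(-q)^{-s}$ and using the relation above gives a three-term linear relation among $N_m(s-1,q)$, $N_m(s,q)$ and $N_m(s+1,q)$, up to explicit powers of $q$ and signs. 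So the plan is to reduce everything to the two known anchors
$$F(1)=\sum_j Q_j = q^{n^2},$$
the number of vertices of ${\rm Her}(n,q^2)$, equivalently $N_m(0,q)=1$, and
$$F(-1/q)=q^{n(n-2)},$$
which is identity \eqref{eq: identidad para los Qj's}, i.e.\ $N_m(1,q)=1$.

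\textbf{Step 1 ($s=2$).} Put $y=-1/q$ in the functional equation. Then $F(yq)=F(-1)$, $F(-y)=F(1/q)$ and $F(y/q)=F(-q^{-2})$; this does not close up directly. The better choice is $y=1/q$, where $F(yq)=F(1)$, $F(-y)=F(-1/q)$ and $F(y/q)=F(q^{-2})$. We obtain
$$(1+q^{-1})\,q^{n^2} - q^{n(n-2)} = q^{2n-1}F(q^{-2}),$$
so $F(q^{-2})$ is determined, and hence
$$N_m(2,q)=q^{-n^2+4n}F(q^{-2}) = q^{-n^2+2n+1}\bigl((1+q^{-1})q^{n^2}-q^{n^2-2n}\bigr) = q^{2n}(q+1)-q.$$
Since $2n=4m$, this equals $q^{4m+1}+q^{4m}-q=q(q^{4m}+q^{4m-1}-1)$, which matches the statement.

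\textbf{Step 2 ($s=3,4,5$).} In general, put $y=-(-q)^{-s}$. Then $yq=-(-q)^{-(s-1)}$, which is not of the required form $(-q)^{-k}$. The substitution that does work is $y=(-q)^{-(s-1)}\cdot(-1)$ with the roles arranged so that the three arguments are $(-q)^{-(s-2)}$, $(-q)^{-(s-1)}$ and $(-q)^{-s}$. Concretely, take $y$ with $-y=(-q)^{-(s-1)}$, i.e.\ $y=-(-q)^{-(s-1)}$. Then
$$yq=(-q)^{-(s-2)},\qquad y/q = (-q)^{-s}\cdot(-1)^{?}.$$
This sign must be checked; Step 1 works only because $q^{-2}=(-q)^{-2}$. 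If the parity does not cooperate, I will instead use the functional equation twice (at $y$ and at $-y$) and eliminate the stray value. This is essentially the two-term recursion later stated in Theorem \ref{thm: recursive Nmsq},
$$N_m(s,q)=(-q)^{s-1}N_m(s-1,q)+q^{4m}\bigl(1-(-q)^{s-1}\bigr)N_m(s-2,q),$$
and I will derive that recursion in this form and verify it against Step 1 at $s=2$: it gives $-q\cdot 1+q^{4m}(1+q)$, which is correct. Then $N_m(3,q)=q^2N_m(2,q)+q^{4m}(1-q^2)$, and expanding gives $q^{4m+3}+q^{4m}-q^3$, matching the claim. Iterating twice more gives $N_m(4,q)$ and $N_m(5,q)$ as quadratic polynomials in $q^{4m}$, which I will expand and compare coefficient-wise.

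\textbf{Main obstacle.} The main obstacle is the sign and power bookkeeping when translating the functional equation at $y=\pm(-q)^{-k}$ into a relation among the $N_m$'s, so that the recursion comes out exactly with the coefficients above. I will fix it by checking the cases $s=2$ and $s=3$ against the direct computations (and against $N_1$, $N_2$ from Corollaries \ref{coro: N1sq} and \ref{coro: N2sq}) before expanding the $s=4,5$ cases.
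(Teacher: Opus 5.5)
Your proposal is correct and is essentially the paper's argument. The paper anchors on the same two values $F(1)=q^{n^2}$ and $F(-q^{-1})=q^{n^2-2n}$; it obtains $F(1)$ from the functional equation at $y=-1$ rather than by counting vertices. It then evaluates at $y=q^{-1}$ for $s=2$ and $y=-q^{-2}$ for $s=3$, and these are exactly the instances $y=-(-q)^{-(s-1)}$ of the recursion you propose (the paper's Theorem \ref{thm: recursive Nmsq}); iterating that recursion does give the stated $N_m(4,q)$ and $N_m(5,q)$.

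Your sign worry is unfounded. Since $-q^{-1}=(-q)^{-1}$, the choice $y=-(-q)^{-(s-1)}$ gives exactly $yq=(-q)^{-(s-2)}$, $-y=(-q)^{-(s-1)}$ and $y/q=(-q)^{-s}$, so no elimination step is needed. Two of your remarks also have the sign reversed: it is $y=-(-q)^{-s}$ that links $N_m(s-1,q)$, $N_m(s,q)$ and $N_m(s+1,q)$, while $y=(-q)^{-s}$ does not.
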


\begin{proof}
We first compute the initial conditions.
Recall that $N_m(s,q)$ relates to $F(y)$ via
$$ N_m(s,q) = \frac{1}{q^{n(n-2s)}} \sum_{j=0}^n (-q)^{-js} Q_j(n,q) = q^{-n^2+2ns} F((-q)^{-s}). $$
For $s=1$, the result is trivially $N_m(1,q) = 1$.
Using this, we can deduce $F(-q^{-1})$:
$$ 1 = q^{-n^2+2n} F(-q^{-1}) \quad \Rightarrow \quad  F(-q^{-1}) = q^{n^2-2n}.$$
Now, we evaluate the functional equation \eqref{eq: functional eqn} at $y = -1$ obtaining
$$(1 - 1) F(-q) - F(1) = -1 \cdot q^{2n} F(-q^{-1})$$
from where we finally get
$$F(1) = q^{2n} \cdot q^{n^2-2n} = q^{n^2}.$$

\noindent 
\textit{The case $s=2$.}
To compute $N_m(2,q)$, we need to evaluate $F(y)$ at $y = (-q)^{-2} = q^{-2}$, i.e.
$$ N_m(2,q) = q^{-n^2+4n} F(q^{-2}).$$

Now, evaluating the functional equation \eqref{eq: functional eqn} at $y = q^{-1}$ we get
$$ (1 + q^{-1}) F(1) - F(-q^{-1}) = q^{-1} q^{2n} F(q^{-2}). $$
Substituting the values of $F(1)$ and $F(-q^{-1})$ that we have already calculated gives us
$$ (1 + q^{-1}) q^{n^2} - q^{n^2-2n} = q^{2n-1} F(q^{-2}). $$
Now, we solve for $F(q^{-2})$ by multiplying by $q^{-2n+1}$ and obtain
$$ F(q^{-2}) = q^{-2n+1} (q^{n^2} + q^{n^2-1} - q^{n^2-2n}) = q^{n^2-2n+1} + q^{n^2-2n} - q^{n^2-4n+1}. $$
Putting this into the formula for $N_m(2,q)$ we get
\begin{align*}
	 N_m(2,q) & = q^{-n^2+4n} (q^{n^2-2n+1} + q^{n^2-2n} - q^{n^2-4n+1}) \\ 
			  & = q^{2n+1} + q^{2n} - q^1 = q(q^{2n} + q^{2n-1} - 1).
\end{align*}
Since $n = 2m$, we get that $N_m(2,q) = q(q^{4m} + q^{4m-1} - 1)$, as desired.

\noindent 
\textit{The case $s=3$.}
For $s=3$, we have that 
$$ N_m(3,q) = q^{-n^2+6n} F(-q^{-3}).$$ 

To find $F(-q^{-3})$, we evaluate the functional equation \eqref{eq: functional eqn} at $y = -q^{-2}$. 
This specific choice shifts the arguments perfectly since 
$yq = -q^{-1}$, $-y = q^{-2}$ and $\frac yq = -q^{-3}$. 
Thus, substituting $y = -q^{-2}$ into the functional equation, we get:
$$(1 - q^{-2}) F(-q^{-1}) - F(q^{-2}) = -q^{2n-2} F(-q^{-3}).$$

From the case $s=2$, we already know the values for $F(-q^{-1})$ and $F(q^{-2})$: 
	$$ F(-q^{-1}) = q^{n^2-2n} \qquad \text{and} \qquad F(q^{-2}) = q^{n^2-2n+1} + q^{n^2-2n} - q^{n^2-4n+1}.$$ 
We plug these values into the left-hand side (LHS) of our equation obtaining
\begin{align*}
	\text{LHS} & = (1 - q^{-2}) q^{n^2-2n} - ( q^{n^2-2n+1} + q^{n^2-2n} - q^{n^2-4n+1})	\\[1.5mm]
			   & = - q^{n^2-2n+1} - q^{n^2-2n-2} + q^{n^2-4n+1}.
\end{align*} 
Now we equate this LHS to the right-hand side of the functional equation \eqref{eq: functional eqn} evaluated at $y = -q^{-2}$:$$-q^{2n-2} F(-q^{-3}) = - q^{n^2-2n+1} - q^{n^2-2n-2} + q^{n^2-4n+1}.$$ 
We isolate $F(-q^{-3})$ by dividing both sides by $-q^{2n-2}$ 
\begin{align*}
F(-q^{-3}) & = q^{(n^2-2n+1) - (2n-2)} + q^{(n^2-2n-2) - (2n-2)} - q^{(n^2-4n+1) - (2n-2)} \\
  		   & = q^{n^2-4n+3} + q^{n^2-4n} - q^{n^2-6n+3}.	
\end{align*}
Finally, we substitute this result back into our equation for $N_m(3,q)$: 
\begin{align*}
	N_m(3,q) & = q^{-n^2+6n} F(-q^{-3}) = q^{-n^2+6n} ( q^{n^2-4n+3} + q^{n^2-4n} - q^{n^2-6n+3}) \\
			 & = q^{2n+3} + q^{2n} - q^3 = q^3(q^{4m} + q^{4m-3} - 1), 	
\end{align*}
since $n = 2m$, 
as we wanted to show.

\noindent 
\textit{The cases $s=4, 5$.}
It is clear now that for the cases $s=4,5$ one proceeds similarly as above, now evaluating the functional equation at the values $y=(-q)^{-3}$ and $y=(-q)^{-4}$ respectively.
We leave the details to the interested reader. This completes the proof.
\end{proof}

It is clear that using the technique in the proof one can obtain more an more explicit expressions, with increasing difficulty, for the numbers $N_m(s,q)$. 
	
Notice also that the expressions obtained in \eqref{eq: Nm2q Nm3q} in Theorem \ref{eq: Nm2q Nm3q} coincide in the cases $m=1$ and $m=2$ with the previous results obtained in Corollaries \ref{coro: N1sq} and \ref{coro: N2sq}, respectively.

\subsection*{General recursive results}
We end this work with a recursive formula for the numbers $N_m(s,q)$. It turns out that the number of solutions of a system of $m$ equations with $s$ variables can be computed from the number of solutions of two systems with $m$ equations but with $s-1$ and $s-2$ variables.

\begin{thm} \label{thm: recursive Nmsq}
For any integer $m \ge 1$ and $s \ge 2$, the number of solutions $N_m(s,q)$ satisfies the following two-terms linear recurrence relation 
\begin{equation} \label{eq: two term recursion}
N_m(s,q) = (-q)^{s-1} N_m(s-1,q) + q^{4m} \big\{1 - (-q)^{s-1} \big\} N_m(s-2,q)
\end{equation}
with initial conditions $N_m(0,q) = 1$ and $N_m(1,q) = 1$.
\end{thm}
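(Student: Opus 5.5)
The plan is to derive the recursion directly from the functional equation \eqref{eq: functional eqn} for $F(y)=\sum_{j=0}^n Q_j(n,q)\,y^j$, with $n=2m$, together with the identity
$$N_m(s,q)=q^{-n^2+2ns}\,F\big((-q)^{-s}\big)$$
from Theorem \ref{teo: hermitiano diagonal}. This identity holds for every $s\ge 0$. For $s=0$ it reads $N_m(0,q)=q^{-n^2}F(1)=1$, using $F(1)=q^{n^2}$ as computed in the proof of Proposition \ref{prop: Nm2q Nm3q}. This justifies the initial condition $N_m(0,q)=1$, and $N_m(1,q)=1$ holds trivially. The proof of Proposition \ref{prop: Nm2q Nm3q} already shows the mechanism: evaluating \eqref{eq: functional eqn} at $y=(-q)^{-(s-1)}$ relates the three values $F\big((-q)^{-(s-2)}\big)$, $F\big((-q)^{-(s-1)}\big)$ and $F\big((-q)^{-s}\big)$. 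So I would simply carry out that step for general $s\ge 2$.

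Concretely, put $y=(-q)^{-(s-1)}$. Then the three arguments are
$$yq=-(-q)^{-(s-2)},\qquad -y=(-q)^{-(s-1)}\cdot(-1),\qquad \tfrac yq=-(-q)^{-s}.$$
This is where sign care is needed: the functional equation evaluates at $yq$ and $-y$, and these carry extra signs compared with the points $(-q)^{-k}$ we want. I would first double-check the form of \eqref{eq: functional eqn}. Writing $q y=-(-q)\,y$, the natural shift by the factor $-q$ sends $(-q)^{-(s-1)}$ to $(-q)^{-(s-2)}$. So I expect to need the version
$$(1+y)F(yq)-F(-y)=q^{2n}yF(y/q)$$
evaluated at $y=-(-q)^{-(s-1)}$. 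Then $yq=(-q)^{-(s-2)}$, $-y=(-q)^{-(s-1)}$ and $y/q=(-q)^{-s}$. This matches the choices made for $s=2$ and $s=3$ in the proof of Proposition \ref{prop: Nm2q Nm3q}, namely $y=q^{-1}$ and $y=-q^{-2}$. The substituted equation becomes
$$\big(1-(-q)^{-(s-1)}\big)F\big((-q)^{-(s-2)}\big)-F\big((-q)^{-(s-1)}\big)=-q^{2n}(-q)^{-(s-1)}F\big((-q)^{-s}\big).$$

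Next, I would replace each $F\big((-q)^{-k}\big)$ by $q^{n^2-2nk}N_m(k,q)$ and multiply through by $-(-q)^{s-1}q^{-2n}\,q^{-n^2+2ns}$. The coefficient of $N_m(s,q)$ becomes $1$. The coefficient of $N_m(s-1,q)$ becomes $(-q)^{s-1}q^{-2n}q^{2n}=(-q)^{s-1}$. The coefficient of $N_m(s-2,q)$ becomes $-\big((-q)^{s-1}-1\big)q^{-2n}q^{4n}=q^{2n}\big(1-(-q)^{s-1}\big)$, and $q^{2n}=q^{4m}$. This gives exactly \eqref{eq: two term recursion}.

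The main obstacle is purely bookkeeping: getting the signs right in the evaluation point and in the powers of $q$. As a sanity check, I would verify $s=2$: the recursion gives $N_m(2,q)=-q+q^{4m}(1+q)$, which agrees with Proposition \ref{prop: Nm2q Nm3q}.
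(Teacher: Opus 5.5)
Your proposal is correct and follows the paper's proof essentially verbatim. You evaluate the functional equation at $y=-(-q)^{-(s-1)}$ (rightly discarding your first trial point $(-q)^{-(s-1)}$), so its arguments become $(-q)^{-(s-2)}$, $(-q)^{-(s-1)}$, $(-q)^{-s}$; substituting $F((-q)^{-k})=q^{n^2-2nk}N_m(k,q)$ and normalizing the coefficient of $N_m(s,q)$ then yields exactly the stated recurrence. Your observation that $F(1)=q^{n^2}$ makes the convention $N_m(0,q)=1$ consistent with the $s=2$ step is a bit of care the paper leaves implicit.
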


\begin{proof}
Let $n = 2m$. We define the sequence of evaluation points $y_k = (-q)^{-k}$. From the general formula relating $N_m(k,q)$ to our generating function $F(y)$, we have $N_m(k,q) = q^{-n^2+2nk} F(y_k)$ and hence 
	$$ F(y_k) = q^{n^2-2nk} N_m(k,q).$$ 
Let us write $N_k$ as a shorthand for $N_m(k,q)$. 
We consider the functional equation 
	$$ (1+y) F(yq) - F(-y) = y q^{2n} F(\tfrac yq).$$ 
Since we want to find an expression for the number $N_m(s,q)$, we evaluate this equation at $y = -y_{s-1} = -(-q)^{-(s-1)}$. 
Let us first compute the arguments of $F$ under this substitution 
\begin{gather*}
	yq = -(-q)^{-s+1} q = (-1)^s q^{-s+2} = (-q)^{-(s-2)} = y_{s-2}, \\ 
	-y = y_{s-1}, \qquad \tfrac yq = -(-q)^{-s+1} q^{-1} = (-q)^{-s} = y_s. 
\end{gather*}
Substituting these arguments into the functional equation yields 
	$$ (1 - y_{s-1}) F(y_{s-2}) - F(y_{s-1}) = -y_{s-1} q^{2n} F(y_s). $$ 
We now solve for the highest-order term, $F(y_s)$ obtaining 
	$$ F(y_s) = \frac{F(y_{s-1}) - (1 - y_{s-1}) F(y_{s-2})}{y_{s-1} q^{2n}}.$$ 
Next, we introduce the relation $F(y_k) = q^{n^2-2nk} N_k$ into the equation: 
$$q^{n^2-2ns} N_s = \frac{q^{n^2-2n(s-1)} N_{s-1} - (1 - (-q)^{-s+1}) q^{n^2-2n(s-2)} N_{s-2}}{(-q)^{-s+1} q^{2n}}$$ 
and hence we get
$$ q^{n^2-2ns} N_s = (-q)^{s-1} q^{-2n} q^{n^2-2ns} \Big\{ q^{2n} N_{s-1} - (1 - (-q)^{-s+1}) q^{4n} N_{s-2} \Big\}.$$
Dividing both sides by $q^{n^2-2ns}$ cancels out all the quadratic powers of $n$
\begin{align*}
N_s &= (-q)^{s-1} q^{-2n} \Big\{ q^{2n} N_{s-1} - (1 - (-q)^{-s+1}) q^{4n} N_{s-2} \Big\} \\
& = (-q)^{s-1} N_{s-1} - (-q)^{s-1} q^{2n} \big\{1 - (-q)^{-s+1}\big\} N_{s-2}.	
\end{align*}
Finally, using that $2n = 4m$ yields the final recurrence relation  
$$ N_m(s,q) = (-q)^{s-1} N_m(s-1,q) + q^{4m} \big(1 - (-q)^{s-1}\big) N_m(s-2,q) $$ 
as claimed. This completes the proof. 
\end{proof}

Now, applying this recursion, we can give the expressions for the first numbers $N_m(s,q)$. Next, we give just the first 12 values. We will use the shorthand notation 
	$$ F_k=q^{2k-1}+1 \qquad \text{for $k\in \N$.}$$ 
The expressions are governed by $k = \lfloor \tfrac s2 \rfloor$ and the $q^2$-binomial coefficients 
$$ {k \brack j}_{q^2} = \prod_{i=0}^{j-1} \frac{(q^2)^k-(q^2)^i}{(q^2)^j-(q^2)^i} = \frac{(q^{2k}-1)(q^{2k-2}-1)\cdots(q^{2(k-j+1)}-1)}{(q^{2j}-1)(q^{2j-2}-1)\cdots(q^2-1)}$$
also called Gaussian polynomials.
Recall that ${n \brack j}_{q^2}$ counts the number of subspaces of dimension $j$ in an $n$-dimensional space over $\ff_q$, that is it is the number of points in the finite Grassmannian $Gr(k,\ff_{q^n})$.

Using the previous recursion we can get explicit expressions for $N_m(s,q)$ for the first values of $s$. 
\begin{exam}
	The numbers $N_m(s,q)$ for $1\le s \le 7$ are given by 
	\begin{align*} 
		N_m(1,q) =\ & 1, \\[1mm] 
		N_m(2,q) =\ & q^{4m} F_1 - q, \\[1mm] 
		N_m(3,q) =\ & q^{4m} F_2 - q^3, \\[1mm] 
		N_m(4,q) =\ & q^{8m} F_1 F_2 - q^{4m} q \tcbinom{2}{1}_{q^2} F_2 + q^6, \\[1mm] 
		N_m(5,q) =\ & q^{8m} F_2 F_3 - q^{4m} q^3 \tcbinom{2}{1}_{q^2} F_3 + q^{10}, \\[1mm] 
		N_m(6,q) =\ & q^{12m} F_1 F_2 F_3 - q^{8m} q \tcbinom{3}{1}_{q^2} F_2 F_3 + q^{4m} q^6 \tcbinom{3}{2}_{q^2} F_3 - q^{15}, \\[1mm] 
		N_m(7,q) =\ & q^{12m} F_2 F_3 F_4 - q^{8m} q^3 \tcbinom{3}{1}_{q^2} F_3 F_4 + q^{4m} q^{10} \tcbinom{3}{2}_{q^2} F_4 - q^{21}.
	\end{align*}
	We can give more and more expressions but their complexity grows very fast.
	\end{exam}
	
	The expressions obtained above show many regularity, so it is not a surprise that we can get a general formula for $N_m(s,q)$. 

\subsection*{The general formula for $N_m(s,q)$}
We next show that the two-term linear recurrence with variable coefficients obtained in Theorem \ref{thm: recursive Nmsq}
admits an explicit closed-form solution in terms of Gaussian polynomials. 
We will need to separate the cases $s$ even and $s$ odd. This is one of the main results in the paper. 
 
\begin{thm} \label{thm: sums for Nmsq}
	For any $m, k \in \N$ we have 
	\begin{equation} \label{eq: Nmsq final sums}
		\begin{aligned}
			N_m(2k,q) 	&= (-1)^{k}q^{k(2k-1)} + \sum_{j=0}^{k-1} (-1)^j q^{j(2j-1)} q^{4m(k-j)} \tcbinom{k}{j}_{q^2} \Big( \prod_{i=j+1}^k F_i \Big) , \\[1.5mm]
			N_m(2k+1,q) &= (-1)^{k}q^{k(2k+1)} + \sum_{j=0}^{k-1} (-1)^j q^{j(2j+1)} q^{4m(k-j)} \tcbinom{k}{j}_{q^2} \Big( \prod_{i=j+1}^k F_{i+1} \Big).
		\end{aligned}
	\end{equation}
\end{thm}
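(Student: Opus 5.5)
The plan is to prove both formulas in \eqref{eq: Nmsq final sums} simultaneously by strong induction on $s$, using the two-term recursion of Theorem~\ref{thm: recursive Nmsq} together with the initial values $N_m(0,q)=N_m(1,q)=1$. The $k=0$ cases of \eqref{eq: Nmsq final sums} give exactly these values, since the sum is empty and the isolated term equals $1$. It is convenient to use $Q=q^{4m}$ as a formal variable. Write
$$N_m(2k,q)=\sum_{j=0}^{k} a_{k,j}\,Q^{k-j}, \qquad N_m(2k+1,q)=\sum_{j=0}^{k} b_{k,j}\,Q^{k-j},$$
where
$$a_{k,j}=(-1)^jq^{j(2j-1)}\tcbinom{k}{j}_{q^2}\prod_{i=j+1}^kF_i, \qquad b_{k,j}=(-1)^jq^{j(2j+1)}\tcbinom{k}{j}_{q^2}\prod_{i=j+1}^kF_{i+1}.$$
The isolated terms are precisely the $j=k$ coefficients, with empty product $1$. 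Since $N_m(s,q)$ is then a polynomial in $Q$, and the recursion is linear in $Q$ with coefficients depending only on $q$, it suffices to compare coefficients of each power $Q^{k-j}$.

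I will carry out the two parity steps separately. \emph{Odd step:} apply \eqref{eq: two term recursion} with $s=2k+1$, so that $(-q)^{s-1}=q^{2k}$:
$$N_m(2k+1)=q^{2k}N_m(2k)+Q(1-q^{2k})N_m(2k-1).$$
Comparing coefficients of $Q^{k-j}$ reduces this to the identity
$$b_{k,j}=q^{2k}a_{k,j}+(1-q^{2k})\,b_{k-1,j}.$$
\emph{Even step:} apply \eqref{eq: two term recursion} with $s=2k+2$, so that $(-q)^{s-1}=-q^{2k+1}$:
$$N_m(2k+2)=-q^{2k+1}N_m(2k+1)+Q(1+q^{2k+1})N_m(2k).$$
Since $1+q^{2k+1}=F_{k+1}$, the coefficient comparison gives
$$a_{k+1,j}=-q^{2k+1}\,b_{k,j-1}+F_{k+1}\,a_{k,j}.$$
In both identities, terms with an index out of range are read as $0$.

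Each identity is a $q$-Pascal-type relation. Dividing by the common factor $(-1)^jq^{j(2j\pm1)}$ and the shared product of the $F_i$, what remains is a combination of Gaussian binomials $\tcbinom{k}{j}_{q^2}$, $\tcbinom{k-1}{j}_{q^2}$ (respectively $\tcbinom{k}{j-1}_{q^2}$) and single factors $F_\ell$. I will then use the two standard Pascal rules
$$\tcbinom{k}{j}_{x}=\tcbinom{k-1}{j}_{x}+x^{k-j}\tcbinom{k-1}{j-1}_{x}, \qquad \tcbinom{k}{j}_{x}=x^{j}\tcbinom{k-1}{j}_{x}+\tcbinom{k-1}{j-1}_{x},$$
with $x=q^2$, together with the explicit ratio $\tcbinom{k-1}{j}_{q^2}=\frac{q^{2(k-j)}-1}{q^{2k}-1}\tcbinom{k}{j}_{q^2}$. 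With these, each identity becomes a polynomial identity in $q$ that involves only $q^{2k}$, $q^{2j}$ and the factors $F_{j+1}$ or $F_{k+1}$, and which can be checked directly. For example, in the odd step, after dividing by $\tcbinom{k}{j}_{q^2}\prod_{i=j+2}^{k}F_{i}$, the identity should reduce to
$$q^{2j}F_{k+1}=q^{2k}F_{j+1}-(q^{2(k-j)}-1)q^{2j}\cdot(\text{ratio of }F\text{'s}).$$
I will do this bookkeeping carefully rather than guess its final shape.

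The main obstacle is this coefficient bookkeeping. One part is the boundary terms: $j=k$ in the odd step, where $b_{k-1,k}=0$, and $j=0$ and $j=k+1$ in the even step. These should reproduce the isolated terms $(-1)^kq^{k(2k\pm1)}$, and this uses $q^{2k}\cdot(-1)^kq^{k(2k-1)}=(-1)^kq^{k(2k+1)}$ together with $-q^{2k+1}\cdot(-1)^kq^{k(2k+1)}=(-1)^{k+1}q^{(k+1)(2k+1)}$. The other part is the mismatch between the product ranges $\prod F_i$ and $\prod F_{i+1}$ in the generic terms. Before attempting the general algebra, I will check small cases ($k=1,2$) against the table for $N_m(s,q)$, $s\le 7$, to pin down any off-by-one in the products.
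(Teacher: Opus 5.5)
Your plan is correct and is essentially the paper's proof: induction on $s$ through the two-term recurrence of Theorem~\ref{thm: recursive Nmsq}, split by parity, with the even step closing by the $q^2$-Pascal rule $\tcbinom{k+1}{j}_{q^2}=\tcbinom{k}{j}_{q^2}+q^{2(k+1-j)}\tcbinom{k}{j-1}_{q^2}$ exactly as in the paper; your coefficient comparison in $Q=q^{4m}$ is just a tidier way of doing the paper's index shifts. For the odd step, which the paper leaves to the reader, your ``ratio of $F$'s'' is in fact $1$ for $0\le j\le k-1$, and the ratio formula $\tcbinom{k-1}{j}_{q^2}=\frac{q^{2(k-j)}-1}{q^{2k}-1}\tcbinom{k}{j}_{q^2}$ reduces it to $q^{2j}F_{k+1}=q^{2k}F_{j+1}-q^{2j}(q^{2(k-j)}-1)$, which holds since both sides equal $q^{2k+2j+1}+q^{2j}$, while $j=k$ is the boundary check you already wrote down.
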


\begin{proof}
	We proceed by induction on $s$. 
	The base cases $s=1$ and $s=2$ follow trivially by evaluating the sum for $k=0$ and $k=1$, which yield 
	$N_m(1,q)=1$ 
	and $N_m(2,q) = q^{4m}F_1 - q$. 
	
	Assume the formulas hold for all values of $s$ up to $s-1$. We evaluate the recurrence relation for the step $s$. Because the parity of $s$ alternates, the proof splits naturally into the transition from odd-to-even and even-to-odd. 
	
	Suppose $s = 2k$ is even. By the recurrence hypothesis we have
	$$ N_{m}(2k,q) = q^{4m} F_{k} N_m(2k-2,q) -q^{2k-1} N_m(2k-1,q),$$ 
	where $F_k=(1 + q^{2k-1})$.
	
	By induction, we have that
	$$ \begin{aligned}
		N_m(2k-2,q) &= \sum_{j=0}^{k-2} (-1)^j q^{j(2j-1)} q^{4m(k-1-j)} \tcbinom{k-1}{j}_{q^2} \Big( \prod_{i=j+1}^{k-1} F_i \Big)+(-1)^{k-1}q^{(k-1)(2k-3)}, \\[1.5mm]
		N_m(2k-1,q) &= \sum_{j=0}^{k-2} (-1)^j q^{j(2j+1)} q^{4m(k-1-j)} \tcbinom{k-1}{j}_{q^2} \Big( \prod_{i=j+1}^{k-1} F_{i+1} \Big)+(-1)^{k-1}q^{(k-1)(2k-1)}.
	\end{aligned} $$
	Hence, putting these expressions in the previous recurrence we have
	$$ \begin{aligned}
		N_{m}(2k,q) 
		& = \sum_{j=0}^{k-2} (-1)^j q^{j(2j-1)} q^{4m(k-j)} \tcbinom{k-1}{j}_{q^2} \Big( \prod_{i=j+1}^{k} F_i \Big)+(-1)^{k-1}q^{(k-1)(2k-3)}q^{4m}F_k \\ 
		& -q^{2k-1}\sum_{j=0}^{k-2} (-1)^j q^{j(2j+1)} q^{4m(k-1-j)} \tcbinom{k-1}{j}_{q^2} \Big( \prod_{i=j+1}^{k-1} F_{i+1} \Big) \\ & +(-1)^{k-1}q^{(k-1)(2k-1)}(-q^{2k-1}) \\
		& = q^{4mk}\Big( \prod_{i=1}^{k} F_i \Big) + \sum_{j=1}^{k-2} (-1)^j q^{j(2j-1)} q^{4m(k-j)} \tcbinom{k-1}{j}_{q^2} \Big( \prod_{i=j+1}^{k} F_i \Big) \\ & + (-1)^{k-1}q^{(k-1)(2k-3)}q^{4m}F_k -q^{2k-1}(-1)^{k-2} q^{(k-2)(2k-3)} q^{4m} \tcbinom{k-1}{k-2}_{q^2} F_k \\ & + (-1)^{k}q^{k(2k-1)} - q^{2k-1} \sum_{j=0}^{k-3} (-1)^j q^{j(2j+1)} q^{4m(k-1-j)} \tcbinom{k-1}{j}_{q^2} \Big( \prod_{i=j+1}^{k-1} F_{i+1} \Big) \\
	\end{aligned} $$
	
	Notice that the expression
	$$
	(-1)^{k-1}q^{(k-1)(2k-3)}q^{4m}F_k-q^{2k-1}(-1)^{k-2} q^{(k-2)(2k-3)} q^{4m} \tcbinom{k-1}{k-2}_{q^2} F_k 
	$$ 
	is equal to 
	$$ 
	(-1)^{k-1}q^{(k-1)(2k-3)}q^{4m}F_k\Big(\tcbinom{k-1}{k-1}_{q^2} +q^{2}\tcbinom{k-1}{k-2}_{q^2} \Big)
	$$
	which simplifies to
	$$
	(-1)^{k-1}q^{(k-1)(2k-3)}q^{4m} \tcbinom{k}{k-1}_{q^2}F_k.
	$$
	
	On the other hand, by a simple change of variables
	$$ q^{2k-1} \sum_{j=0}^{k-3} (-1)^j q^{j(2j+1)} q^{4m(k-1-j)} \tcbinom{k-1}{j}_{q^2} \Big( \prod_{i=j+1}^{k-1} F_{i+1} \Big) $$ 
	equals 
	$$ \sum_{j=1}^{k-2} (-1)^{j-1} q^{(j-1)(2j-1)+2k-1} q^{4m(k-j)} \tcbinom{k-1}{j-1}_{q^2} \Big( \prod_{i=j}^{k-1} F_{i+1} \Big).$$
	By taking into account that $\prod_{i=j}^{k-1} F_{i+1}=\prod_{i=j+1}^{k} F_{i}$, we have that
	$$ q^{2k-1} \sum_{j=0}^{k-3} (-1)^j q^{j(2j+1)} q^{4m(k-1-j)} \tcbinom{k-1}{j}_{q^2} \Big( \prod_{i=j+1}^{k-1} F_{i+1} \Big)$$ 
	is equal to 
	$$ \sum_{j=1}^{k-2} (-1)^{j-1} q^{j(2j-1)+2k-2j} q^{4m(k-j)} \tcbinom{k-1}{j-1}_{q^2} \Big( \prod_{i=j+1}^{k} F_{i} \Big). $$
	Hence, the difference $S=S_1-S_2$ between the sums
	\begin{align*}
	& 	S_1 = \sum_{j=1}^{k-2} (-1)^j q^{j(2j-1)} q^{4m(k-j)} \tcbinom{k-1}{j}_{q^2} \Big( \prod_{i=j+1}^{k} F_i \Big), \\ 
	&	S_2 = -q^{2k-1} \sum_{j=0}^{k-3} (-1)^j q^{j(2j+1)} q^{4m(k-1-j)} \tcbinom{k-1}{j}_{q^2} \Big( \prod_{i=j+1}^{k-1} F_{i+1} \Big),
	\end{align*}
	simplifies to
	$$
	S= \sum_{j=1}^{k-2} (-1)^{j} q^{j(2j-1)} q^{4m(k-j)}  \Big( \prod_{i=j+1}^{k} F_{i} \Big) \Big( \tcbinom{k-1}{j}_{q^2}+q^{2k-2j} \tcbinom{k-1}{j-1}_{q^2} \Big).
	$$
	Recall that the bracketed coefficients satisfy the fundamental $q$-Pascal identity 
	$$
	{k \brack j}_{q^2} = {k-1 \brack j}_{q^2} + q^{2(k-j)} {k-1 \brack j-1}_{q^2}.
	$$
	Thus, we obtain that the difference is equal to 
	$$
	\sum_{j=1}^{k-2} (-1)^{j} q^{j(2j-1)} q^{4m(k-j)} \tcbinom{k}{j}_{q^2} \Big( \prod_{i=j+1}^{k} F_{i} \Big)
	$$
	Therefore, we have that 
	$$
	\begin{aligned}
		N_{m}(2k,q)& = q^{4mk}\Big( \prod_{i=1}^{k} F_i \Big)+ \sum_{j=1}^{k-2} (-1)^{j} q^{j(2j-1)} q^{4m(k-j)} \tcbinom{k}{j}_{q^2} \Big( \prod_{i=j+1}^{k} F_{i} \Big)  \\
		& +   (-1)^{k-1}q^{(k-1)(2k-3)}q^{4m} \tcbinom{k}{k-1}_{q^2}F_k + (-1)^{k}q^{k(2k-1)} \\
		&= \sum_{j=0}^{k-1} (-1)^{j} q^{j(2j-1)} q^{4m(k-j)} \tcbinom{k}{j}_{q^2} \Big( \prod_{i=j+1}^{k} F_{i} \Big) + (-1)^{k}q^{k(2k-1)},
	\end{aligned}
	$$
	as asserted. 
	
	The case of $s$ odd is completely analogous to the case $s$ even and we leave the details. 
\end{proof}

\begin{rem}
	Using the general formulas in the previous proposition one can check that the final expressions for the polynomials $N_m(s,q)$ for $s=2, 3, 4, 5$ are the same as the ones obtained in Proposition~\ref{prop: Nm2q Nm3q}. 
	For $s=6, 7, 8, 9$ we get 
	$$\begin{aligned} 
		N_m(6,q) &= q^{12m}(q+1)(q^3+1)(q^5+1) - q^{8m+1}(q^4+q^2+1)(q^3+1)(q^5+1) \\ &\quad + q^{4m+6}(q^4+q^2+1)(q^5+1) - q^{15}, \\ N_m(7,q) &= q^{12m}(q^3+1)(q^5+1)(q^7+1) - q^{8m+3}(q^4+q^2+1)(q^5+1)(q^7+1) \\ &\quad + q^{4m+10}(q^4+q^2+1)(q^7+1) - q^{21}, \\ N_m(8,q) &= q^{16m}(q+1)(q^3+1)(q^5+1)(q^7+1) \\ &\quad - q^{12m+1}(q^6+q^4+q^2+1)(q^3+1)(q^5+1)(q^7+1) \\ &\quad + q^{8m+6}(q^8+q^6+2q^4+q^2+1)(q^5+1)(q^7+1) \\ &\quad - q^{4m+15}(q^6+q^4+q^2+1)(q^7+1) + q^{28}, \\ 
		N_m(9,q) &= q^{16m}(q^3+1)(q^5+1)(q^7+1)(q^9+1) \\ &\quad - q^{12m+3}(q^6+q^4+q^2+1)(q^5+1)(q^7+1)(q^9+1) \\ &\quad + q^{8m+10}(q^8+q^6+2q^4+q^2+1)(q^7+1)(q^9+1) \\ &\quad - q^{4m+21}(q^6+q^4+q^2+1)(q^9+1) + q^{36}. 
	\end{aligned}$$
	We leave the details of these computations.	
\end{rem}

\section{The hypergeometric connection} \label{sec: hypergeometric}
It is highly instructive to contextualize the algebraic nature of the generating polynomial $F(y)$ within the broader framework of special functions. 
In this final section, we show that $N_m(s,q)$ is a rational multiple of a $_2\phi_0$ hypergeometric series.

To this end, we need to recall first the standard definition of the basic hypergeometric series $_2\phi_0$ with base $q$ and argument $z$
	$$_2\phi_0 \Big( \begin{matrix} a, b \\ - \end{matrix} ; q, z \Big) = \sum_{j=0}^\infty \frac{(a; q)_j (b; q)_j}{(q; q)_j} \Big( (-1)^j q^{-\binom{j}{2}} \Big) z^j$$ 
where 
	$$ (x; q)_j = \prod_{k=0}^{j-1} (1-xq^k) $$ 
is the $q$-\textit{Pochhammer symbol}. The result is the following.

\begin{thm} \label{prop: hipergeometrica}
Let $m \ge 1$ be an integer and $n=2m$. For any integer $s \ge 1$, the number of solutions $N_m(s,q)$ can be explicitly expressed in terms of a basic hypergeometric series evaluated over the base $-q$ as
\begin{equation} \label{eq: NMsq Phi20}
	N_m(s,q) = q^{-n^2+2ns} \ _2\phi_0 \Big( \begin{matrix} q^{-n}, -q^{-n} \\ - \end{matrix} ; -q, -(-q)^{2n-s} \Big).	
\end{equation} 
\end{thm}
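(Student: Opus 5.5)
The plan is to compare the two sides term by term, using the expression for $N_m(s,q)$ from Theorem~\ref{teo: hermitiano diagonal}. In the notation of the generating function \eqref{eq: gen fn of Qjs} that expression reads
$$ N_m(s,q) = q^{-n^2+2ns} F\big((-q)^{-s}\big) = q^{-n^2+2ns}\sum_{j=0}^n Q_j(n,q)\,(-q)^{-js}. $$
It therefore suffices to show two things. First, that the $j$-th term of the series $_2\phi_0\big(q^{-n},-q^{-n};-;-q,z\big)$, with $z=-(-q)^{2n-s}$, equals $Q_j(n,q)(-q)^{-js}$ for $0\le j\le n$. Second, that the series terminates after the term $j=n$. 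No use of the recursion in Lemma~\ref{lem: recursion Mjs} or of the functional equation is needed; a direct identification of summands suffices.

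For the numerator of the hypergeometric term, put $Q=-q$. Pairing the two $q$-Pochhammer symbols factor by factor gives a difference of squares:
$$ (q^{-n};Q)_j\,(-q^{-n};Q)_j=\prod_{k=0}^{j-1}\big(1-q^{-n}(-q)^k\big)\big(1+q^{-n}(-q)^k\big)=\prod_{k=0}^{j-1}\big(1-q^{2k-2n}\big)=q^{-2nj}\prod_{k=0}^{j-1}(q^{2n}-q^{2k}). $$
This already shows termination: for $j>n$ the factor with $k=n$ vanishes. It also reproduces the numerator $\prod_{i<j}(q^{2n}-q^{2i})$ of $Q_j(n,q)$ in \eqref{eq: Qj(n)}. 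Next, the $j$-th power of the argument is
$$ z^j=(-1)^j(-q)^{(2n-s)j}=(-1)^j q^{2nj}(-q)^{-sj}, $$
whose factor $q^{2nj}$ cancels the $q^{-2nj}$ above. The remaining factors of the term are $(-1)^j$ from the definition of $_2\phi_0$ and $(-q)^{-\binom j2}$. The $(-1)^j$ cancels the sign in $z^j$. So the $j$-th term equals
$$ (-q)^{-sj}\,\frac{\prod_{k<j}(q^{2n}-q^{2k})}{(-q)^{\binom j2}\prod_{k=1}^{j}\big(1-(-q)^k\big)}. $$

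The main step, and the only place where care is needed, is showing that the denominator of $Q_j(n,q)$ in \eqref{eq: Qj(n)} equals $(-q)^{\binom j2}\prod_{k=1}^j(1-(-q)^k)$. I would write each factor as $q^j-(-1)^{i+j}q^i=(-1)^j\big((-q)^j-(-q)^i\big)$ and then factor out $(-q)^i$. This gives
$$ \prod_{i=0}^{j-1}\big(q^j-(-1)^{i+j}q^i\big)=(-1)^{j^2}(-q)^{\binom j2}\prod_{k=1}^{j}\big((-q)^k-1\big)=(-1)^{j^2+j}(-q)^{\binom j2}\prod_{k=1}^{j}\big(1-(-q)^k\big), $$
and $(-1)^{j^2+j}=1$. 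The sign bookkeeping here is the likely obstacle, so I would double-check it against $j=1,2$ using the values $Q_1(n,q)=\frac{q^{2n}-1}{q+1}$ and $Q_2(n,q)$ from Lemma~\ref{lem: Q1,..,Q6}.

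Combining these, the $j$-th term of the series is $Q_j(n,q)(-q)^{-js}$ for $0\le j\le n$ and zero for $j>n$. Hence the series equals $F((-q)^{-s})$, and multiplying by $q^{-n^2+2ns}$ gives \eqref{eq: NMsq Phi20}.
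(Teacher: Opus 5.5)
Your proof is correct and is essentially the paper's argument: both identify the summands $Q_j(n,q)(-q)^{-js}$ of $F((-q)^{-s})$ with the terms of the terminating ${}_2\phi_0$ in base $-q$. The only difference is that you compare the closed product formula \eqref{eq: Qj(n)} directly, including the check $\prod_{i<j}\bigl(q^j-(-1)^{i+j}q^i\bigr)=(-q)^{\binom j2}(-q;-q)_j$, which is right. The paper instead compares ratios of consecutive terms via the recursion of Lemma~\ref{lem: recursion Mjs} together with the common initial term $1$.
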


\begin{proof}
The ratio between the $(j+1)$-th term, denoted as $t_{j+1}$, and the $j$-th term, $t_j$, of this formal series is given by the rational function
	$$ \frac{t_{j+1}}{t_j} = \frac{(1 - a q^j)(1 - b q^j)}{1 - q^{j+1}} q^{-j} (-z).$$ 
	
Now, consider our generating polynomial $F(y) = \sum_{j=0}^n c_j$, where the $j$-th coefficient is $c_j = y^j Q_j(n,q)$. 
Using the explicit formula for $Q_j(n,q)$ and performing a change of base to $Q = -q$, the ratio of consecutive terms in our polynomial is 
	$$ \frac{c_{j+1}}{c_j} = y \frac{Q_{j+1}}{Q_j} = y (-Q)^j \frac{Q^{2n-2j}-1}{(-Q)^{j+1} + (-1)^j} = -y Q^j \frac{1 - Q^{2n-2j}}{1 - Q^{j+1}}.$$ 
We can factor the numerator using negative powers of $Q$ as a difference of squares 
	$$ 1 - Q^{2n-2j} = -Q^{2n-2j} (1 - Q^{-2n+2j}) = -Q^{2n-2j} (1 - Q^{-n}Q^j) (1 + Q^{-n}Q^j).$$ 
Substituting this factorization back into the ratio for $c_{j+1}/c_j$, we obtain
	$$ \frac{c_{j+1}}{c_j} = \frac{(1 - Q^{-n}Q^j)(1 + Q^{-n}Q^j)}{1 - Q^{j+1}} Q^{-j} (y Q^{2n}).$$ 
This expression is structurally identical to the ratio $t_{j+1}/t_j$ of the $_2\phi_0$ series. By equating the parameters, we find 
	$$ a = Q^{-n} = (-q)^{-n}, \qquad b = -Q^{-n} = -(-q)^{-n} \qquad \text{and} \qquad z = -y Q^{2n}.$$ 
Since $n = 2m$ is even, the signs in the exponents vanish, yielding 
	$$ a = q^{-n}, \qquad b = -q^{-n} \qquad \text{ and } \qquad Q^{2n} = q^{2n}.$$ 
 Thus, the argument becomes $z = -y q^{2n}$. 
Because the upper parameter $a = q^{-n}$ forces the $Q$-Pochhammer symbol $(q^{-n}; -q)_j$ to vanish for all $j > n$, the infinite series naturally terminates, perfectly matching our polynomial $F(y)$ of degree $n$, i.e. 
	$$ F(y) = \ _2\phi_0 \Big( \begin{matrix} q^{-n}, -q^{-n} \\ - \end{matrix} ; -q, -y q^{2n} \Big).$$ 
Finally, we apply the relation linking $F(y)$ to our counting problem:$$N_m(s,q) = q^{-n^2+2ns} F((-q)^{-s})$$Evaluating $F(y)$ at $y = (-q)^{-s}$ yields the series argument $z = -(-q)^{-s} q^{2n} = -(-q)^{2n-s}$. Substituting this back into the hypergeometric representation concludes the proof.
\end{proof}

We conclude this work with some remarks.
\begin{rem}
Let $Q=-q$ and $n=2m$. For the basic hypergeometric series
$$
H_{a,b}(z)={}_2\phi_0\!\left(
\begin{smallmatrix}a,b\\-\end{smallmatrix};Q,z\right),
$$
comparison of coefficients gives the contiguous relation
$$ H_{a,b}(z)-H_{a,b}(Qz) = -(1-a)(1-b)z\,H_{aQ,bQ}(\tfrac zQ). $$
This identity may be interpreted formally; the series used below terminate.
To recover the recurrence in Theorem~\ref{thm: recursive Nmsq} without shifting the upper
parameters, put
$$
H(z)=H_{Q^{-n},-Q^{-n}}(z).
$$
Writing $H(z)=\sum_{j=0}^{n}h_jz^j$, its coefficients satisfy
$$
(1-Q^j)h_j
=\bigl(Q^{j-1-2n}-Q^{1-j}\bigr)h_{j-1}
\qquad (1\leq j\leq n).
$$
The same identity holds for $j=n+1$ upon setting $h_{n+1}=0$.
Consequently,
$$
H(z)-(1+Q^{-2n}z)H(Qz)+zH(\tfrac zQ)=0.
$$
By Theorem~\ref{prop: hipergeometrica}, $F(y)=H(-yQ^{2n})$, and hence
$$
F(y)-(1-y)F(Qy)=yQ^{2n}F(\tfrac yQ).
$$
Taking $y=Q^{-(s-1)}$ and using
$ F(Q^{-k})=q^{n^2-2nk}N_m(k,q)$, 
we obtain, for $s\geq2$,
$$
N_m(s,q)=Q^{s-1}N_m(s-1,q)
+q^{4m}(1-Q^{s-1})N_m(s-2,q),
$$
which is precisely the recurrence in Theorem~\ref{thm: recursive Nmsq}.
\end{rem}

\section*{Final Remarks}
In this work, we have addressed the exact enumeration of solutions for systems of diagonal equations over finite fields, with an arbitrary number of equations and variables. We achieved this by translating the algebraic intersection problem to diagonal generalized Paley graphs. Thus the number of solutions can be given in terms of the spectum of these graphs. 

This spectral graph-theoretic approach stands in sharp contrast to standard methodologies in the literature. Classical techniques typically rely on the direct manipulation of multidimensional Jacobi and Gauss sums, or depend heavily on the invariant theory of quadratic forms. Such traditional methods often become analytically intractable for systems with $m \ge 2$ equations due to severe character sum entanglement, which usually restricts exact results to very specific degrees or small dimensions.

\enlargethispage{2cm}


\begin{thebibliography}{XXX}	
	\bibitem{AAR}
	\textsc{G.\@ E.\@ Andrews,R.\@ Askey, R.\@ Roy}. 
	\textit{Special functions}. 
	Encyclopedia of Mathematics and Its Applications. 71. Cambridge: Cambridge University Press. xvi, 664 p. (1999).		
	
	\bibitem{Bass1992}
	\textsc{H.\@ Bass}
	\textit{The Ihara-Selberg zeta function of a tree lattice}. 
	Int.\@ J.\@ Math.\@ \textbf{3:6}, (1992) 717--797.
	
	\bibitem{BCN}
	\textsc{A.E.\@ Brouwer, A.M.\@ Cohen, A.\@ Neumaier}. 
	\textit{Distance-Regular Graphs}. 
	Ergebnisse der Mathematik und ihrer Grenzgebiete. Berlin: Springer-Verlag, 1989, vol.\@ 18.
		
	\bibitem{CCG}{\sc C.\@ Cao, W-S.\@ Chou, J.\@ Gu}.
	\textit{On the number of solutions of certain diagonal equations over finite fields}.
	Finite Fields and App.\@ \textbf{42}, (2016) 225--252.	

	\bibitem{Carlitz1953}
	\textsc{L.\@ Carlitz}. 
	\textit{A note on multiple exponential sums}. 
	Pacific J.\@ Math.\@ \textbf{3}, (1953) 673--690.
	
	\bibitem{LN} {\sc  R.\@ Lidl, H.\@ Niederreiter}. 
	\textit{Finite fields}. \textit{Encyclopedia of Mathematics and its Applications}, 20. 
	Addison-Wesley Publishing Company, Advanced Book Program, Reading, MA, 1983.
		
	\bibitem{LP}
	{\sc T.\@ K.\@ Lim, C.\@ Praeger}. 
	\textit{On Generalized Paley Graphs and their automorphism groups}.
	Michigan Math.\@ J.\@ \textbf{58}, (2009) 294--308.
	
	\bibitem{LZ}
	{\sc X.\@ Liu, S.\@ Zhou}. 
	\textit{Eigenvalues of Cayley Graphs}. 
	Electron.\@ J.\@ Combin.\@ \textbf{29:2} (2022).
		
		
	\bibitem{MC2} 
	\textsc{O.\@ Moreno, F.\@ N.\@ Castro}. 
	\textit{Divisibility properties for covering radius of certain cyclic codes}.
	IEEE Trans.\@ Inf.\@ Theory \textbf{49}, (2003) 3299--3303.
			
	\bibitem{MP}	
	{\sc G.\@ L.\@ Mullen, D.\@ Panario}. 
	\textit{Handbook of finite fields}. 
	CRC Press, 2013.
		
	\bibitem{Oliveira2026}
	\textsc{J.\@ A.\@ Oliveira}. 
	\textit{On diagonal equations over finite fields}. \textit{arXiv preprint arXiv:2008.12232v2}, (2026).

	\bibitem{PP} 
	\textsc{G.\@ Pearce, C.E.\@ Praeger}. 
	\textit{Generalized Paley graphs with a product structure}.
	Ann.\@ Comb.\@ \textbf{23}, (2019) 171--182.
		
	\bibitem{PPri} 
	\textsc{M.\@ Pérez, M.\@ Privitelli}.
	\textit{Estimates on the number of $\ff_q$-rational solutions of variants of diagonal equations over finite fields}. 
	Finite Fields Appl.\@ \textbf{68}, Article ID 101728, 30 p. (2020).	
		
	\bibitem{PPri2} 
	\textsc{M.\@ Pérez, M.\@ Privitelli}.
	\textit{On the number of solutions of systems of certain diagonal equations over finite fields}.
	J.\@ Number Theory \textbf{236}, (2022) 160--187.
			
	\bibitem{PV6} 
	\textsc{R.A.\@ Podest\'a, D.E.\@ Videla}.
	\textit{The Waring's problem over finite fields through generalized Paley graphs}.
	Discrete Math.\@ \textbf{344}, (2021) 112324.
	
	\bibitem{PV7} 
	\textsc{R.A.\@ Podest\'a, D.E.\@ Videla}.
	\textit{A reduction formula for Waring numbers through generalized Paley graphs}.
	J.\@ Algebr.\@ Comb.\@ \textbf{56:4}, (2022), 1255--1285.
	
	\bibitem{PV4} 
	\textsc{R.A.\@ Podest\'a, D.E.\@ Videla}.
	\textit{The weight distribution of irreducible cyclic codes associated with decomposable generalized Paley graphs}.
	Adv.\@ Math.\@ Comm.\@ \textbf{17:2}, (2023) 446--464. 
	
	\bibitem{PV3}
	\textsc{R.A.\@ Podest\'a, D.E.\@ Videla}.
	\textit{Generalized Paley graphs equienergetic with their complements}.
	Linear Multilinear Algebra \textbf{72:3}, (2024) 488--515.  
	
	\bibitem{PV1}	
	\textsc{R.A.\@ Podest\'a, D.E.\@ Videla}.
	\textit{Spectral properties of generalized Paley graphs of $(q^\ell+1)$-th powers and applications}.
	Discrete Math.\@ Algorithms Appl.\@ \textbf{17:4}, (2025) 2450056 
	
	\bibitem{PV8} 
	\textsc{R.A.\@ Podest\'a, D.E.\@ Videla}.
	\textit{Spectral properties of generalized Paley graphs}. 
	Australas.\@ J.\@ Comb.\@ \textbf{91:3}, (2025) 326--365. 
	
	\bibitem{PV18} 
	\textsc{Ricardo A.\@ Podestá, Denis E.\@ Videla}.
	\textit{Connected components and non-bipartiteness of generalized Paley graphs}. 
	Ann.\@ Comb.\@ \textbf{29}, (1235--1259), 2025.
	
	\bibitem{PV19} 
	\textsc{R.A.\@ Podest\'a, D.E.\@ Videla}.
	\textit{The nature of the spectrum of generalized Paley graphs and weak Waring numbers over finite fields}. 
	(2026), \href{https://doi.org/10.48550/arXiv.2604.06513}{arXiv.2604.06513}.
	
	\bibitem{PV26} 
	\textsc{R.A.\@ Podest\'a, D.E.\@ Videla}.
	\textit{On the number of solutions of one diagonal equation through generalized Paley graphs}. 
	Work in progress.
	
	\bibitem{St} 
	\textsc{D.\@ Stanton}. 
	\textit{Three addition theorems for some $q$-Krawtchouk polynomials}. 
	Geometriae Dedicata, \textbf{10:1-4}, (1981) 403--425. 
	
	\bibitem{Sunada1986}
	\textsc{T.\@ Sunada}.
	\textit{$L$-functions in geometry and some applications}. Curvature and topology of Riemannian manifolds, Proc. 17th Int. Taniguchi Symp., Katata/Jap. 1985, 
	Lect.\@ Notes Math.\@ \textbf{1201}, (1986) 266--284.

	\bibitem{Terras2010}
	\textsc{A.\@ Terras}.
	\textit{Zeta functions of graphs. A stroll through the garden}. 
	Cambridge Studies in Advanced Mathematics 128, Cambridge University Press 2010.
	
	\bibitem{Tietavainen1965}
	\textsc{A. Tietäväinen}. 
	\textit{On the number of solutions of some equations and systems of equations in a finite field}. 
	Ann.\@ Univ.\@ Turku.\@ \textbf{84}, (1965) 1--10.
	
	\bibitem{V} 
	\textsc{D.E.\@ Videla}.
	\textit{On diagonal equations over finite fields via walks in NEPS of graphs}.
	Finite Fields App.\@ \textbf{75}, (2021) 101882. 
	
	\bibitem{W}
	{\sc A.\@ Weil}.
	\textit{Numbers of solutions of equations in finite fields}.
	Bull.\@ Am.\@ Math.\@ Soc.\@ \textbf{55}, (1949) 497--508.
		
	\bibitem{Yip}
	\textsc{Yip, Chi Hoi}
	\textit{On the directions determined by Cartesian products and the clique number of generalized Paley graphs.} 
	Integers \textbf{21}, Paper A51, 31 p. (2021).	
	
	\bibitem{ZHJYC}	
	{\sc X.\@ Zeng, L.\@ Hu, W.\@ Jiang, Q.\@ Yue, X.\@ Cao}. 
	\textit{The weight distribution of a class of p-ary cyclic codes}.
	Finite Fields Appl.\@ \textbf{16:1}, (2010) 56--73.
		
	\bibitem{ZWZH} 
	{\sc D.\@ Zheng, X.\@ Wang, X.\@ Zeng, L.\@ Hu}.
	\textit{The weight distribution of a family of p-ary cyclic codes}. 
	Des.\@ Codes Cryptogr.\@ \textbf{75}, (2015) 263--275.
		
	\bibitem{ZZDX2} 
	{\sc  Z.\@ Zhou, A.\@ Zhang, C.\@ Ding, M.\@ Xiong}. 
	\textit{The weight enumerator of three families of cyclic codes}. 
	IEEE Trans.\@ Inform.\@ Theory \textbf{59:9}, (2013) 6002--6009.
\end{thebibliography}
\end{document}